\def\hyph{-\penalty0\hskip0pt\relax} 
\newcommand\cprime\textquotesingle 
\tikzset{->-/.style={decoration={markings,mark=at position #1 with {\color{black}\arrow{>}}},postaction={decorate,very thick}}}
\tikzstyle{vertex}=[circle, draw, inner sep=0pt, minimum size=6pt]
\newcommand{\vertex}{\node[vertex]}
\providecommand \@dotsep{5} \def\listtodoname{List of Todos} \def\listoftodos{\@starttoc{tdo}\listtodoname} \makeatother 
\patchcmd{\@startsection}{\@afterindenttrue}{\@afterindentfalse}{}{}             
\patchcmd{\part}{\bfseries}{\bfseries\LARGE}{}{}
\patchcmd{\section}{\scshape}{\bfseries}{}{}\renewcommand{\@secnumfont}{\bfseries} 
\patchcmd{\@settitle}{\uppercasenonmath\@title}{\large}{}{}
\patchcmd{\@setauthors}{\MakeUppercase}{}{}{}
\theoremstyle{plain}
\newtheorem{thm}{Theorem}[section]
\newtheorem{cor}[thm]{Corollary}
\newtheorem{lemma}[thm]{Lemma}
\newtheorem{prop}[thm]{Proposition}
\newtheorem{thmA}{Theorem}  
\newtheorem{resultA}[thmA]{Result} 
\newtheorem*{thm*}{Theorem}
\newtheorem*{lem*}{Lemma}
\theoremstyle{definition}
\newtheorem{df}[thm]{Definition}
\newtheorem{rem}[thm]{Remark}
\newtheorem{ex}[thm]{Example}
\newtheorem*{df*}{Definition}
\newtheorem*{ex*}{Example}
\newtheorem*{rem*}{Remark}
\DeclareRobustCommand{\gobblefour}[5]{}    
\DeclareFontFamily{OT1}{pzc}{}                                
\DeclareFontShape{OT1}{pzc}{m}{it}{<-> s * [1.10] pzcmi7t}{}
\DeclareMathAlphabet{\mathpzc}{OT1}{pzc}{m}{it}
\DeclareSymbolFont{sfoperators}{OT1}{bch}{m}{n} \DeclareSymbolFontAlphabet{\mathsf}{sfoperators} \makeatletter\def\operator@font{\mathgroup\symsfoperators}\makeatother 
\DeclareSymbolFont{cmletters}{OML}{cmm}{m}{it}
\DeclareSymbolFont{cmsymbols}{OMS}{cmsy}{m}{n}
\DeclareSymbolFont{cmlargesymbols}{OMX}{cmex}{m}{n}
\DeclareMathSymbol{\myjmath}{\mathord}{cmletters}{"7C}     \let\jmath\myjmath 
\DeclareMathSymbol{\myamalg}{\mathbin}{cmsymbols}{"71}     
\DeclareMathSymbol{\mycoprod}{\mathop}{cmlargesymbols}{"60}\let\coprod\mycoprod
\DeclareMathSymbol{\myalpha}{\mathord}{cmletters}{"0B}     \let\alpha\myalpha 
\DeclareMathSymbol{\mybeta}{\mathord}{cmletters}{"0C}      \let\beta\mybeta
\DeclareMathSymbol{\mygamma}{\mathord}{cmletters}{"0D}     \let\gamma\mygamma
\DeclareMathSymbol{\mydelta}{\mathord}{cmletters}{"0E}     \let\delta\mydelta
\DeclareMathSymbol{\myepsilon}{\mathord}{cmletters}{"0F}   \let\epsilon\myepsilon
\DeclareMathSymbol{\myzeta}{\mathord}{cmletters}{"10}      \let\zeta\myzeta
\DeclareMathSymbol{\myeta}{\mathord}{cmletters}{"11}       \let\eta\myeta
\DeclareMathSymbol{\mytheta}{\mathord}{cmletters}{"12}     \let\theta\mytheta
\DeclareMathSymbol{\myiota}{\mathord}{cmletters}{"13}      \let\iota\myiota
\DeclareMathSymbol{\mykappa}{\mathord}{cmletters}{"14}     \let\kappa\mykappa
\DeclareMathSymbol{\mylambda}{\mathord}{cmletters}{"15}    \let\lambda\mylambda
\DeclareMathSymbol{\mymu}{\mathord}{cmletters}{"16}        \let\mu\mymu
\DeclareMathSymbol{\mynu}{\mathord}{cmletters}{"17}        \let\nu\mynu
\DeclareMathSymbol{\myxi}{\mathord}{cmletters}{"18}        \let\xi\myxi
\DeclareMathSymbol{\mypi}{\mathord}{cmletters}{"19}        \let\pi\mypi
\DeclareMathSymbol{\myrho}{\mathord}{cmletters}{"1A}       \let\rho\myrho
\DeclareMathSymbol{\mysigma}{\mathord}{cmletters}{"1B}     \let\sigma\mysigma
\DeclareMathSymbol{\mytau}{\mathord}{cmletters}{"1C}       \let\tau\mytau
\DeclareMathSymbol{\myupsilon}{\mathord}{cmletters}{"1D}   \let\upsilon\myupsilon
\DeclareMathSymbol{\myphi}{\mathord}{cmletters}{"1E}       \let\phi\myphi
\DeclareMathSymbol{\mychi}{\mathord}{cmletters}{"1F}       \let\chi\mychi
\DeclareMathSymbol{\mypsi}{\mathord}{cmletters}{"20}       \let\psi\mypsi
\DeclareMathSymbol{\myomega}{\mathord}{cmletters}{"21}     \let\omega\myomega
\DeclareMathSymbol{\myvarepsilon}{\mathord}{cmletters}{"22}\let\varepsilon\myvarepsilon
\DeclareMathSymbol{\myvartheta}{\mathord}{cmletters}{"23}  \let\vartheta\myvartheta
\DeclareMathSymbol{\myvarpi}{\mathord}{cmletters}{"24}     \let\varpi\myvarpi
\DeclareMathSymbol{\myvarrho}{\mathord}{cmletters}{"25}    \let\varrho\myvarrho
\DeclareMathSymbol{\myvarsigma}{\mathord}{cmletters}{"26}  \let\varsigma\myvarsigma
\DeclareMathSymbol{\myvarphi}{\mathord}{cmletters}{"27}    \let\varphi\myvarphi
\DeclareMathOperator{\Hom}{Hom}
\DeclareMathOperator{\Aut}{Aut}
\DeclareMathOperator{\GL}{GL}
\DeclareMathOperator{\PGL}{PGL}
\DeclareMathOperator{\Bunvar}{Bun}
\DeclareMathOperator{\rk}{rk}
\DeclareMathOperator{\Coh}{{Coh}}
\DeclareMathOperator{\Bun}{{Bun}}
\DeclareMathOperator{\PBun}{{\mathbb{P}Bun}}
\DeclareMathOperator{\Pic}{{Pic}}
\DeclareMathOperator{\Ext}{{Ext}}
\DeclareMathOperator{\Fr}{{Fr}}
\DeclareMathOperator{\Norm}{{Norm}}
\newcommand{\period}{\Omega}
\newcommand\A{{\mathbb A}}
\newcommand\C{{\mathbb C}}
\newcommand\FF{{\mathbb F}}
\newcommand\F{{\mathcal F}}
\newcommand\G{{\mathcal G}}
\newcommand\N{{\mathbb N}}
\renewcommand\P{{\mathbb P}}
\newcommand\Q{{\mathbb Q}}
\newcommand\Z{{\mathbb Z}}
\newcommand\cA{{\mathcal A}}
\newcommand\cE{{\mathcal E}}
\newcommand\cF{{\mathcal F}}
\newcommand\cG{{\mathcal G}}
\newcommand\cH{{\mathcal H}}
\newcommand\cK{{\mathcal K}}
\newcommand\cL{{\mathcal L}}
\newcommand\cM{{\mathcal M}}
\newcommand\cN{{\mathcal N}}
\newcommand\cO{{\mathcal O}}
\newcommand{\scrG}{\mathscr{G}}
\newcommand\sC{{\mathsf C}}
\newcommand\sH{{\mathsf H}}
\newcommand\sU{{\mathsf U}}
\newcommand\bv{{\mathbf v}}
\newcommand\bw{{\mathbf w}}
\newcommand{\E}{\mathcal E}
\newcommand{\Line}{\mathcal L}
\newcommand{\Fq}{\mathbb{F}_q}
\renewcommand\geq{\geqslant}
\renewcommand\leq{\leqslant}
\newcommand{\gen}[1]{\langle #1 \rangle}
\renewcommand\emptyset\varnothing
\newcommand\ol{\textbf{Oliver:} }
\title{Automorphic forms for $\PGL(3)$ over elliptic function fields\\[10pt] \normalsize Part 1: Graphs of Hecke operators}
\author{Roberto Alvarenga}
\address{\rm Roberto Alvarenga, Instituto de Ci\^encias Matem\'aticas e de Computa\c{c}\~ao - USP, S\~ao Carlos, Brazil}
\email{alvarenga@icmc.usp.br}
\author{Oliver Lorscheid}
\address{\rm Oliver Lorscheid, University of Groningen, the Netherlands, and IMPA, Rio de Janeiro, Brazil}
\email{oliver@impa.br}
\author{Valdir Pereira J\'unior}
\address{\rm Valdir Pereira J\'unior, Instituto Nacional de Matem\'atica Pura e Aplicada, Rio de Janeiro, Brazil}
\email{valdirjosepereirajunior@gmail.com}
\begin{document}

\begin{abstract} 
 This is a first part of a series of papers in which we develop explicit computational methods for automorphic forms for $\GL_3$ and $\PGL_3$ over elliptic function fields. In this first part, we determine explicit formulas for the action of the Hecke operators on automorphic forms on $\GL_2$ and $\GL_3$ in terms of their graphs. Our primary result consists in a complete description of the graphs of degree $1$ Hecke operators for $\GL_3$. As complementary results, we describe the `even component' of the graphs of degree $2$ Hecke operators for $\GL_2$ and the `neighborhood of the identity' of the graphs of degree $2$ Hecke operators for $\GL_3$. In addition, we establish two dualities for Hecke operators for $\GL_n$ and $\PGL_n$, which hold for all $n$, all degrees and all function fields.

\end{abstract}

\maketitle

\begin{center}
 \textit{Dedicated to Gunther Cornelissen on the occasion of his 50th birthday.}
\end{center}



\begin{small} \tableofcontents \end{small}

\section*{Introduction}
\label{introduction}

This is the first part of a series of papers, in which we aim for an explicit understanding of the vanishing of periods of automorphic forms for $\PGL_3$ over an elliptic function field. Before we explain the results of the present paper, we outline the overall scope of this series of papers and put it into context with the literature.


\subsection*{Periodical automorphic forms for \texorpdfstring{$\PGL_2$}{PGL(2)}}

Period integrals for $\PGL_2$ are well-under\hyph stood and, in essence, vanishing periods yield (conjecturally) tempered representations. Let us survey this in more detail.

We fix a global field $F$ as our base field and let $\A$ be its adele ring. Let $H$ be an algebraic subgroup of $\PGL_2$ and $f:\PGL_2(F)\backslash\PGL_2(\A)\to\C$ an automorphic form. We call $f$ \emph{$H$-periodical} if its \emph{$H$-period}
\[
 \period_H(f)(g) \ = \ \int\limits_{H(F)\backslash H(\A)} f(hg) \,dh
\]
vanishes for all $g\in\PGL_2(\A)$. We say that an automorphic representation $\pi$ (which we always assume to be irreducible) is $H$-periodical if all of its elements are $H$-periodical. Note that this condition is invariant under conjugation of $H$.

Since $\PGL_2$ has dimension $3$, the only interesting periods appear for $1$ or $2$-dimen\hyph sional $H$, which are of the following types (up to conjugation): the Borel subgroup $B$ and its unipotent radical $N$, the diagonal torus $T$, for every separable quadratic field extension $E$ of $F$ a non-split torus $T_E$ and, in characteristic $2$, the Weil-restrictions of $\mathbb{G}_{m,E}$ for non-separable quadratic extensions. Note that $B$ is of secondary interest as the semidirect product of $N$ with $T$. The following interpretations of period integrals are known.

\subsubsection*{\texorpdfstring{$N$}{N}-periods} $N$-periodical automorphic forms are, by definition, cusp forms. As a by-product of Drinfeld's proof of the Langlands conjectures for $\GL_2$ over function fields (cf.\ \cite{Drinfeld83}, \cite{drinfeld-peterson}), $N$-periodical, or cuspidal, representations are tempered in the function field case. The Ramanujan-Petersson conjecture claims the analogous property for number fields.

\subsubsection*{\texorpdfstring{$T_E$}{T(E)}-periods} $T_E$-periods are linked to special values of $L$-functions, and $T_E$-periodical automorphic forms are called \emph{$E$-toroidal}. In the case of a cuspidal representation $\pi$ over a number field and $f$ in $\pi$, Waldspurger shows in \cite{Waldspurger85} that
\[
 \bigg| \ \int\limits_{T_E(F)\backslash T_E(\A)} f(tg)\, dt \ \bigg|^2 \quad = \quad c_{f,T_E}(g)\cdot L(\pi,\tfrac12) \cdot L(\pi\otimes\chi_E,\tfrac12)
\]
for some factor $c_{f,T_E}(g)$ that is nonzero as a function in $g$, where $L(\pi,s)$ is the $L$-series of $\pi$ and $\chi_E$ is the quadratic Hecke character associated with $E/F$. The analogous formula for function fields is established by Chuang and Wei in \cite{Chuang-Wei19}; also cf.\ \cite{Lysenko08,Lysenko20}.
 
In the case of an Eisenstein series $E(g,\chi)$, where $g\in\PGL_2(\A)$ and $\chi$ is a Hecke character, Zagier shows in \cite{zagier} that 
\[
 \int\limits_{T_E(F)\backslash T_E(\A)} E(tg,\chi)\, dt \ = \ c_{\chi,T_E}(g)\cdot L(\chi,\tfrac12)\cdot L(\chi\chi_E,\tfrac12)
\]
for some factor $c_{\chi,T_E}(g)$ that is nonzero as a function in $g$, where $L(\chi,s)$ and $L(\chi\chi_E,s)$ are Hecke $L$-series. In the function field case, the Hasse-Weil theorem (cf.\ \cite{Weil49}) asserts that the real part of $\chi$ is contained in $[-2\gamma\sqrt{q},\,2\gamma\sqrt{q}]$ if $E(g,\chi)$ is $T_E$-periodical, where $\gamma$ is the genus of $F$. This condition is equivalent with the temperedness of the automorphic representation generated by $E(g,\chi)$. The analogous claim for number fields is equivalent to the generalized Riemann hypothesis.
 
\subsubsection*{\texorpdfstring{$T$}{T}-periods} 
 Waldspurger's formula holds also for $T$-periods: the squared absolute value of the $T$-period of a cusp form $f$ in $\pi$ is a non-zero multiple of $L(\pi,\frac12)^2$. The $T$-period of an Eisenstein series diverges and therefore does not make sense if taken literally. Cornelissen and the second author show in \cite{Cornelissen-Lorscheid12} and \cite{Lorscheid13} that a suitable renormalization leads to a link with $L$-series, namely
 \[
  \int\limits_{T(F)\backslash T(\A)} E(tg,\chi)-\tfrac12\big[\period_N(E(\,\cdot\,,\chi))(tg)+\period_{N^t}(E(\,\cdot\,,\chi))(tg)\big]  \, dt \ = \ c_{\chi,T}(g)\cdot L(\chi,\tfrac12)^2
 \]
 for some factor $c_{\chi,T}(g)$ that is nonzero as a function in $g$, where $N^t$ is the group of lower triangular unipotent matrices. Thus we get the same relation as for $T_E$ between the temperedness for $T$-periodical Eisenstein series and the generalized Riemann hypothesis.


\subsection*{Generalizations to higher rank groups}

Periodical automorphic forms for higher rank groups are far less understood than for $\PGL_2$ and an active area of research. We do not attempt to give an overview of this rich field, but we would like to mention a few notable results:
\begin{itemize}
 \item Lafforgue generalizes Drinfeld's proof of the Langlands conjecture and the temperedness of cuspidal representations to $\PGL_n$ in the function field case (cf.\ \cite{Lafforgue97}, \cite{Lafforgue02}).
 \item $\GL_n\times\GL_{n'}$-periods of cusp forms for $\GL_{n+n'}$ (for $n'=n$ or $n+1$) are intensely studied (e.g., see \cite{Feigon-Martin-Whitehouse18}).
 \item Zagier's computation of $T_E$-periods of Eisenstein series is generalized by Wielonsky in \cite{Wielonsky85} to a formula for $\PGL_n$, which links a certain $1$-parameter family of residual Eisenstein series with an $L$-function of a degree $n$ extension $E$ of $F$. 
 \item Beineke and Bump (\cite{Beineke-Bump03}; also cf.\ \cite{Beineke-Bump02,Bump-Goldfeld84,Woodson94}) investigate certain renormalizations of Eisenstein series for $\PGL_3$ and the relation between $T$-periods and $L$-series. The situation is, however, much more complicated for higher rank than for $\PGL_2$.
 \item Periods of automorphic forms appear in the context of the relative trace formula. Namely, a comparision of relative trace formulas for two pairs of algebraic groups yields typically a relation between associated period integrals, and the non-vanishing of such periods is typically related to the lifting of automorphic representations. As first references, we guide the reader to \cite{Jacquet05} and \cite{Lapid06}.
\end{itemize}
In particular, the understanding of periodical automorphic forms for $\PGL_3$ is far from complete. It is largely open which periods are linked to tempered representations. In this paper and its sequels, we investigate this question in the case of elliptic function fields in terms of explicit parametrizations of automorphic forms.


\subsection*{Automorphic representations for \texorpdfstring{$\PGL_3$}{PGL(3)}}

The theory of automorphic forms for $\PGL_3$ is richer than that for $\PGL_2$, which features only cusp forms and a $1$-parameter family of Eisenstein series. The reason is that $\PGL_3$ has more parabolic subgroups that lead to induced representation: the whole group $\PGL_3$, two maximal parabolic subgroups of types $(2,1)$ and $(1,2)$ (which are connected by duality) and the Borel subgroup. The normalized induction of cuspidal representations from a Levi subgroup of a parabolic subgroup to $\PGL_3$ yields the following types of automorphic forms: cusp form (stemming from $\PGL_3$), a $1$-parameter family of `parabolic' Eisenstein series (stemming from either of the maximal parabolic subgroup) and a $2$-parameter family of `Borel' Eisenstein series; for more details, we refer to the sequel paper, as well as to \cite[App.\ G]{Laumon97} and \cite[sections 1.6, 1.7]{Pereira20} for general background.

In fact, Langlands shows in \cite[pp.\ 203--208]{Covallis1} that every automorphic representation is a subquotient of a representation that is induced from a cuspidal representation on a Levi subgroup; also cf.\ Franke (\cite{Franke98}), and Moeglin-Waldspurger (\cite{Moeglin-Waldspurger95}) for the function field case.


\subsection*{Automorphic forms in the function field case}

In the function field case, the domain of an automorphic form with fixed ramification is discrete modulo the right-action of a compact open subgroup $K$, which allows for explicit descriptions of automorphic forms. Even though a large part of our methods extends to ramified automorphic forms, we restrict ourselves to unramified automorphic forms for the sake of limiting the complexity. 

Unramified automorphic forms for function fields behave particularly nice: every unramified automorphic representation contains a unique unramified automorphic form, or spherical vector, up to scalar multiples. Therefore the unramified automorphic representations correspond to certain $1$-dimensional representations of the (commutative) spherical Hecke algebra $\cH_K$. In other words, unramified  automorphic representations correspond to eigenforms of $\cH_K$ and are therefore determined by their eigenvalues under the action of Hecke operators.


\subsection*{Geometric Langlands: Weil's theorem and Hecke operators}

The geometric Langlands program (e.g., cf.\ \cite{Frenkel20, Frenkel-Gaitsgory-Vilonen02,Gaitsgory15,Laumon87}) is rooted in the following observation that is usually attributed to Weil (e.g., cf.\ \cite[Lemma 3.1]{frenkel2}): the domain of unramified automorphic forms, modulo the right-action of the standard maximal compact subgroup $K$ of $\PGL_n(\A)$, is naturally identified with the set $\PBun_n X$ of isomorphism classes of $\P^{n-1}$-bundles\footnote{Note that an isomorphism class of a $\P^{n-1}$-bundle can be identified with an $\Pic X$-orbit of isomorphism classes of rank $n$ bundles on $X$; cf.\ \cite[Ex.\ 7.10]{biblia}.} on the smooth, projective and geometrically irreducible curve $X$ with function field $F$.

Let $|X|$ be the set of closed points of $X$. Satake shows in \cite{Satake63} that the spherical Hecke algebra $\cH_K$ of $\PGL_n(\A)$ is freely generated over $\C$ by the pairwise commuting Hecke operators $\Phi_{x,r}=\Phi^{(n)}_{x,r}$ where $x\in|X|$ and $r=1,\dotsc,n-1$. The Hecke operator $\Phi_{x,r}$ acts on functions $f:\PBun_nX\to \C$ in terms of the formula
\[
 (\Phi_{x,r}.f)(\overline\cE) \ = \ \sum_{\cE/\cF=\cK_x^{\oplus r}} f(\overline\cF)
\]
where $\cE$ is a rank $n$ bundle over $X$ representing the class $\overline\cE\in\PBun_nX$, where $\cK_x$ is the simple torsion sheaf with support at $x$ and where $\cF$ ranges over all subsheaves of $\cE$ with quotient $\cK_x^{\oplus r}$.


\subsection*{Graphs of Hecke operators}

We can subsume the information of how $\Phi_{x,r}$ acts on automorphic forms in an edge weighted graph $\overline\scrG_{x,r}^{(n)}$ with vertex set $\PBun_n X$ as follows. We define the \emph{weights $m_{x,r}(\overline\cE,\overline\cF)$ of $\overline\scrG_{x,r}^{(n)}$} as the number of subsheaves $\cF'$ of $\cE$ with quotient $\cK_x^{\oplus r}$ such that $\overline{\cF'}=\overline\cF$. Then $\overline\scrG_{x,r}^{(n)}$ has an edge from $\overline\cE$ to $\overline\cF$ of weight $m_{x,r}(\overline\cE,\overline\cF)$ if the weight is nonzero. These graphs were introduced by the first and second author in \cite{roberto-graphs} and \cite{oliver-graphs}, respectively.

We draw edges together with their weights $m=m_{x,r}(\overline\cE,\overline\cF)$ as
\[
  \beginpgfgraphicnamed{tikz/fig7}
  \begin{tikzpicture}[>=latex]
    \vertex[circle,fill,label={below:$\cE$}](00) at (0,0) {};
    \vertex[circle,fill,label={below:$\cF$}](10) at (3,0) {};
    \path[-,font=\scriptsize]
     (00) edge[->-=0.8] node[pos=0.2,auto,black] {\footnotesize $m$} (10);
    \node at (5,0) {, $\qquad$ or as $\qquad$};
    \vertex[circle,fill,label={below:$\cE$}](20) at (7,0) {};
    \vertex[circle,fill,label={below:$\cF$}](30) at (10,0) {};
    \path[-,font=\scriptsize]
     (20) edge node[pos=0.2,auto,black] {\footnotesize $m$} node[pos=0.8,auto,black] {\footnotesize $m'$} (30);
   \end{tikzpicture}
 \endpgfgraphicnamed
\]
if $m'=m_{x,r}(\overline\cF,\overline\cE)$ is also nonzero, where we label the nodes with representative rank $n$ bundles (without bars) for better readability. Note that the latter drawing convention applies in particular in the case $2r=n$ due to the first duality theorem (Theorem \ref{thm-dualities} and Corollary \ref{cor: dualities for PGL_n}):

\begin{thmA}[first duality for Hecke operators]\label{thmA}
 Let $\cE$ and $\cF$ be rank $n$ bundles on $X$. Then $m_{x,r}(\overline\cE,\overline\cF)\neq 0$ if and only if $m_{x,n-r}(\overline\cF,\overline\cE)\neq0$.
\end{thmA}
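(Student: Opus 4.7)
The plan is to give a direct, symmetric construction: starting from a subsheaf $\cF' \subset \cE$ with $\cE/\cF' \cong \cK_x^{\oplus r}$ and $\overline{\cF'} = \overline\cF$, I would produce a subsheaf of $\cF'$ (hence exhibit an edge out of $\overline\cF$) whose quotient is $\cK_x^{\oplus(n-r)}$ and whose isomorphism class in $\PBun_nX$ equals $\overline\cE$. The natural candidate is $\cE(-x) := \cE \otimes \cO_X(-x)$, since twisting by a line bundle preserves the class in $\PBun_nX$, so $\overline{\cE(-x)} = \overline\cE$.

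The first step is to verify that the canonical inclusion $\cE(-x) \hookrightarrow \cE$ factors through $\cF'$. This is a purely local check at $x$: since $\cE/\cF' \cong \cK_x^{\oplus r}$ is annihilated by the maximal ideal $\fm_x$, the image of $\cE(-x) \to \cE$, which locally at $x$ is $\fm_x \cdot \cE_x$, lies inside $\cF'$. The second step is to identify the quotient $\cF'/\cE(-x)$. From the chain $\cE(-x) \subset \cF' \subset \cE$ and the short exact sequence $0 \to \cE(-x) \to \cE \to \cK_x^{\oplus n} \to 0$, a short diagram chase (equivalently, a length count of torsion sheaves supported at $x$) identifies $\cF'/\cE(-x)$ with a subsheaf of $\cK_x^{\oplus n}$ whose further quotient by $\cE/\cF' \cong \cK_x^{\oplus r}$ is trivial; semisimplicity of torsion sheaves supported at $x$ then forces $\cF'/\cE(-x) \cong \cK_x^{\oplus (n-r)}$.

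Putting these steps together, $\cE(-x)$ is a subsheaf of the representative $\cF'$ of $\overline\cF$ with quotient $\cK_x^{\oplus(n-r)}$ and class $\overline\cE$, so it contributes to $m_{x,n-r}(\overline\cF,\overline\cE)$, which is therefore nonzero. The converse implication is immediate by applying the same construction with the roles of $(\cE,r)$ and $(\cF,n-r)$ interchanged, using $n-(n-r)=r$.

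I do not anticipate a real obstacle: the argument reduces to manipulating the three-step filtration $\cE(-x) \subset \cF' \subset \cE$ together with the invariance of $\overline{(\,\cdot\,)}$ under twists by line bundles. The only point that needs care is that $m_{x,r}(\overline\cE,\overline\cF)$ is well-defined on isomorphism classes in $\PBun_nX$, i.e., independent of the chosen representatives $\cE$ and $\cF$ — but this is built into the setup of the graphs $\overline\scrG_{x,r}^{(n)}$ and follows at once from the observation that tensoring the entire short exact sequence by a line bundle is a bijection on the set of relevant subsheaves.
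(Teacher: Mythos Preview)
Your proof is correct and is essentially the same as the paper's (Lemma~\ref{lem-dual1} together with Corollary~\ref{cor: dualities for PGL_n}): both exploit the three-step filtration $\cE(-x)\subset\cF'\subset\cE$, or equivalently its twist $\cF'\subset\cE\subset\cF'(x)$. Your presentation is a bit more streamlined, deducing $\cE(-x)\subset\cF'$ directly from the fact that $\fm_x$ annihilates $\cE/\cF'\cong\cK_x^{\oplus r}$, whereas the paper constructs the inclusion $\cE\hookrightarrow\cF'(x)$ by an explicit local computation with the elementary divisor theorem.
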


The virtue of the graph $\overline\scrG_{x,r}$ is that we can read off the \emph{$\Phi_{x,r}$-eigenvalue equations} of an automorphic eigenform $f:\PBun_nX\to\C$ as
\[
 \lambda_{x,r}\cdot f(\overline\cE) \ = \ \sum_{
  \tikz[>=latex,font=\tiny,x=20pt]
   {
    \filldraw[inner sep=3] (0,0) circle (2pt) node[label={below:$\overline\cE$}] (00) {}; 
    \filldraw[inner sep=3] (1,0) circle (2pt) node[label={below:$\overline\cF$}] (10) {}; 
    \draw[->-=0.7] (0,0) -- (1,0);
   }
  } 
  m_{x,r}(\overline\cE,\overline\cF) \cdot f(\overline\cF)
\]
where $\lambda_{x,r}$ is the eigenvalue of $f$ for $\Phi_{x,r}$ and $\overline\cF$ ranges over all neighbors of $\overline\cE$ in $\overline\scrG_{x,r}$.


\subsection*{An example}

The theory of unramified automorphic forms for rational function fields is not very exciting from a certain point of view: there are no unramified cusp forms and all vector bundles decompose into a sum of line bundles. Elliptic function fields are of a more intriguing nature. For example, let $x_0$ be chosen base point of the elliptic curve $X$, $x$ a degree one place and $D=x+(d-1)x_0$ for $d\in\{0,1\}$. Then there is a nontrivial extension of the twisted line bundle $\cL_D=\cO(D)$ by the structure sheaf $\cO=\cO_X$ and this defines an indecomposable rank $2$ bundle $\cM_{x,d}$. Another example of an indecomposable rank $2$ bundle is the trace $\cN_{y,2}$ of the line bundle $\cO_{X_2}(\tilde y)$ over constant field extension $X_2=X\otimes_{\FF_q}\FF_{q^2}$ where $\FF_q$ is the constant field of $X$ and $\tilde y$ is a place of $X_2$ that lies over the degree $2$ place $y$ of $X$.

If $X=E_q$ is an elliptic curve over $\FF_q$ with a unique degree $1$ place $x=x_0$ (there is a unique such curve for $q=2,3,4$), then $X$ has $q$ degree $2$ places $y_1,\dotsc,y_q$ and the graph $\overline\scrG_{x,1}^{(2)}$ of $\Phi_{x,1}$ for $\PGL_2$ is as illustrated in Figure \ref{fig: rank 2 graph of Phi(x,1) for class number 1}; cf.\ \cite{oliver-gunther}, \cite{oliver-elliptic}, \cite{serre} or \cite{Takahashi93}.

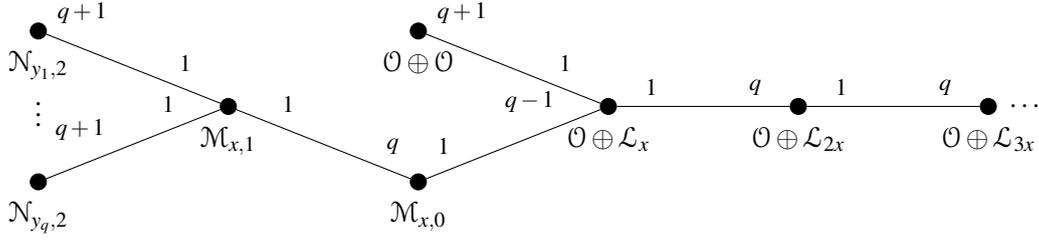
\begin{figure}[t]
 \beginpgfgraphicnamed{tikz/fig8}
  \begin{tikzpicture}[>=latex,x=2.5cm,font=\small]
    \vertex[circle,fill,label={below:${\cN_{y_1,2}}$}] (02) at (0,2) {};
    \vertex[circle,fill,label={below:${\cN_{y_q,2}}$}] (00) at (0,0) {};
    \vertex[circle,fill,label={below:${\cM_{x,1}}$}] (11) at (1,1) {};
    \vertex[circle,fill,label={below:${\cM_{x,0}}$}] (20) at (2,0) {};
    \vertex[circle,fill,label={below:${\cO\oplus\cO}$}] (22) at (2,2) {};
    \vertex[circle,fill,label={below:${\cO\oplus\cL_x}$}] (31) at (3,1) {};
    \vertex[circle,fill,label={below:${\cO\oplus\cL_{2x}}$}] (41) at (4,1) {};
    \vertex[circle,fill,label={below:${\cO\oplus\cL_{3x}}$}] (51) at (5,1) {};
    \node at (0,1) {$\vdots$};
    \node at (5.2,1) {$\dotsb$};
    \path[-,font=\scriptsize] (00) edge node[pos=0.4,auto,black] {$q+1$} node[pos=0.8,auto,black] {$1$} (11);
    \path[-,font=\scriptsize] (02) edge node[pos=0.0,auto,black] {$q+1$} node[pos=0.7,auto,black] {$1$} (11);
    \path[-,font=\scriptsize] (11) edge node[pos=0.2,auto,black] {$1$} node[pos=0.8,auto,black] {$q$} (20);
    \path[-,font=\scriptsize] (20) edge node[pos=0.2,auto,black] {$1$} node[pos=0.8,auto,black] {$q-1$} (31);
    \path[-,font=\scriptsize] (22) edge node[pos=0.0,auto,black] {$q+1$} node[pos=0.7,auto,black] {$1$} (31);
    \path[-,font=\scriptsize] (31) edge node[pos=0.2,auto,black] {$1$} node[pos=0.8,auto,black] {$q$} (41);
    \path[-,font=\scriptsize] (41) edge node[pos=0.2,auto,black] {$1$} node[pos=0.8,auto,black] {$q$} (51);
  \end{tikzpicture}
 \endpgfgraphicnamed
 \caption{The graph $\overline\scrG_{x,1}^{(2)}$ for $X=E_q$ with unique degree $1$ place $x$}
 \label{fig: rank 2 graph of Phi(x,1) for class number 1}
\end{figure}

Given an eigenform $f:\PBun_2 X\to\C$ with eigenvalue $\lambda=\lambda_{x,1}$ for $\Phi_{x,1}$, the graph determines the eigenvalue equations for $f$, such as
\begin{align*}
 \lambda f(\cO\oplus\cO) \ &= \  (q+1)f(\cO\oplus\cL_x), & \lambda f(\cM_{x,0}) \ &= \ qf(\cM_{x,1})+f(\cO\oplus\cL_x), \\ 
 \lambda f(\cN_{y_i})    \ &= \  (q+1)f(\cM_{x,1}),       & \lambda f(\cM_{x,1}) \ &= \ f(\cM_{x,0})+\sum_{j=1}^q f(\cN_{y_j}),
\end{align*}
for $i=1,\dotsc,q$. Note that this system is underdetermined since its $q+3$ equations involve $q+4$ values of $f$. Indeed, we know that every eigenvalue $\lambda$ occurs for some Eisenstein series.

The vanishing of a period imposes an additional condition, which typically limits $\lambda$ to a finite number of values if we aim for a (nonzero) eigenform $f$ in the solution space. Since $f$ lies in a tempered representation if and only if $\lambda\in[-2\sqrt{q},\ 2\sqrt{q}]$ (cf.\ \cite[Lemma 9.3]{Lorscheid13}), this yields a method to verify the temperedness for $H$-periodical eigenforms, which we inspect in more detail for the following two types of periods.

\subsubsection*{Cusp forms}

Let $N$ be the unipotent radical of the Borel subgroup of $\PGL_2$. Then an unramified cusp form $f$ satisfies 
\[
 0 \ = \ \period_N(f)(\overline{\cO\oplus\cO}) \ = \ f(\overline{\cO\oplus\cO}) + (q-1) f(\overline{\cM_{x,0}}).
\]
Solving a system of linear equations shows that we find a nonzero solution only if $\lambda=0$. 
Since $\lambda\in[2\sqrt{q},\ 2\sqrt{q}]$, this shows that every unramified cuspidal representation for $\PGL_2(\A)$ is tempered.

\subsubsection*{Toroidal automorphic forms}
Let $E={\FF_{q^2}F}$. An unramified $T_E$-periodical eigenform $f$ satisfies
\[
 0 \ = \ \period_{T_E}(f)(\overline{\cO\oplus\cO}) \ = \ f(\overline{\cO\oplus\cO}) + 2f(\overline{\cN_{y_1,2}}) + \dotsb + 2f(\overline{\cN_{y_q,2}}).
\]
Solving a system of system of linear equations shows that it contains nonzero solutions only if $\lambda=\pm q$ (cf.\ \cite{oliver-gunther}). Thus $\lambda\in[-2\sqrt{q},\ 2\sqrt{q}]$ for $q=2,3,4$, which shows that every unramified $T_E$-periodical automorphic representation for $\PGL_2(\A)$ is tempered.


\subsection*{The aim of our study}

In the light of the previous examples, we can phrase the overall objective of this series of papers as follows:\\[5pt]
\begin{quote}
 \it We intend to extend the methods from the previous example to $\PGL_3$. In particular, we aim to probe into the question of which (combinations of) period integrals lead to tempered automorphic representations.\\[5pt]
\end{quote}
As a pioneering example, the third author has executed in \cite{Pereira20} this program for the elliptic curve $E_2$ over $\FF_2$ with one rational point. To name an example, all $T_{\FF_{q^2}F}$-periodical Eisenstein series as considered by Wielonsky in \cite{Wielonsky85} are tempered.


\subsection*{The main results of this paper}

The scope of this paper is the computation of the weights $m_{x,r}(\overline\cE,\overline\cF)$ as far as we will use them in our sequel papers. In fact, we will compute refined weights\footnote{In fact, $m_{x,r}(\cE,\cF)$ appears as a multiplicity for the action of $\Phi_{x,r}$ on automorphic forms for $\GL_n$.} $m_{x,r}(\cE,\cF)$, which are defined as the number of subsheaves $\cF'$ of $\cE$ with quotient isomorphic to $\cK_x^{\oplus r}$ such that $\cF'\simeq\cF$ (as abstract sheaves). This determines the weights of $\overline\scrG_{x,r}^{(n)}$ as
\[
 m_{x,r}(\overline\cE,\overline\cF) \ = \ \sum_{\cF'\in\overline\cF} m_{x,r}(\cE,\cF').
\]
The primary result that forms the basis for most of our applications is Theorem \ref{maintheorem} whose content is as follows.\footnote{Since the descriptions of the following results extend over several pages, we omit the full statements in the introduction.} We fix an arbitrary elliptic curve $X$ over $\FF_q$.

\begin{resultA}[main theorem]\label{resultB}
 A description of the weights $m_{x,1}(\cE,\cF)$ for a degree $1$ place $x$ and all rank $3$ bundles $\cE$ and $\cF$ on $X$. 
\end{resultA}

Note that the weights $m_{x,2}(\cE,\cF)$ are fully determined by the weights $m_{x,1}(\cE',\cF')$ thanks to our second duality result (see Theorem \ref{thm-dualities}), which holds, in fact, for every smooth projective curve.

\begin{thmA}[second duality for Hecke operators]\label{thmC}
 Let $\cE_1$ and $\cE_2$ be rank $n$ bundles on $X$ and let $\cE_1^\vee$ and $\cE_2^\vee$ be their respective duals and $x$ a place of $X$. Then $m_{x,r}(\cE_1,\cE_2)=m_{x,n-r}(\cE_1^\vee,\cE_2^\vee\otimes\cO_X(x))$.
\end{thmA}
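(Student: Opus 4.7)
The plan is to construct an isomorphism-preserving bijection between the subsheaves counted by the two sides, induced by the duality functor $(-)^\vee=\mathcal{H}om_{\cO_X}(-,\cO_X)$; the twist by $\cO_X(x)$ in the statement will emerge automatically from the local computation of $\mathcal{E}xt^1(\cK_x,\cO_X)$.

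First, I would start from a short exact sequence $0\to\cF\to\cE_1\to\cK_x^{\oplus r}\to 0$ with $\cF\simeq\cE_2$ and apply $\mathcal{H}om_{\cO_X}(-,\cO_X)$. Since $\cE_1$ and $\cF$ are locally free and $\cK_x$ is torsion while $\cO_X$ is torsion-free, $\mathcal{H}om(\cK_x,\cO_X)=0$ and the only nonvanishing Ext is $\mathcal{E}xt^1(\cK_x,\cO_X)$, so this yields a four-term exact sequence
$$0\to\cE_1^\vee\to\cF^\vee\to\mathcal{E}xt^1_{\cO_X}(\cK_x^{\oplus r},\cO_X)\to 0.$$
A local computation---applying $\mathcal{H}om(-,\cO_X)$ to the locally free resolution $0\to\cO_X\to\cO_X(x)\to\cK_x\to 0$---identifies $\mathcal{E}xt^1(\cK_x,\cO_X)$ with the cokernel of $\cO_X\hookrightarrow\cO_X(x)$, which is canonically isomorphic to $\cK_x$ but is naturally realised as a quotient of $\cO_X(x)$; this is where the $\cO_X(x)$-twist in the theorem originates.

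Next, I would interpret the resulting sequence as placing $\cF^\vee$ between $\cE_1^\vee$ and $\cE_1^\vee\otimes\cO_X(x)$ with cokernel $\cK_x^{\oplus(n-r)}$ on the right, and then reorganise this data---via the twist dictated by the $\mathcal{E}xt$ identification---as a subsheaf of $\cE_1^\vee$ with quotient $\cK_x^{\oplus(n-r)}$ and of isomorphism class $\cE_2^\vee\otimes\cO_X(x)$. Biduality of locally free sheaves, together with the inverse local identification, makes the procedure reversible and preserves isomorphism classes, so it produces a bijection between the two sets of subsheaves in question; comparing cardinalities yields the claimed equality.

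The main obstacle I expect is the careful bookkeeping of the twist by $\cO_X(x)$: one must track exactly how the canonical isomorphism $\mathcal{E}xt^1(\cK_x,\cO_X)\simeq\cK_x$ interacts with both ends of the four-term sequence to produce precisely the twist $\cE_2^\vee\otimes\cO_X(x)$ in the statement, rather than some other twist by a power of $\cO_X(x)$. All other ingredients---exactness of $\mathcal{H}om(-,\cO_X)$ on locally free sheaves, biduality, and the local description of sheaves sandwiched between $\cE_1^\vee$ and $\cE_1^\vee(x)$---are essentially formal.
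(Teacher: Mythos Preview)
Your approach is exactly the paper's: dualize the short exact sequence to get $0\to\cE_1^\vee\to\cF^\vee\to\cK_x^{\oplus r}\to 0$, then realize $\cF^\vee$ as a subsheaf of $\cE_1^\vee(x)$ with cokernel $\cK_x^{\oplus(n-r)}$. The paper isolates this last step as a separate lemma (Lemma~\ref{lem-dual1}), proved by the same local computation you sketch, and then composes the two to obtain the bijection.

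However, your anticipated obstacle is real, and your bookkeeping of the twist comes out with the wrong sign. To pass from $\cF^\vee\subset\cE_1^\vee(x)$ to a subsheaf of $\cE_1^\vee$ you must tensor by $\cO_X(-x)$, which sends $\cF^\vee$ to $\cF^\vee(-x)\simeq\cE_2^\vee\otimes\cO_X(-x)$, \emph{not} $\cE_2^\vee\otimes\cO_X(x)$. A degree count confirms this: if $m_{x,r}(\cE_1,\cE_2)\neq0$ then $\deg\cE_2=\deg\cE_1-r|x|$, whereas a subsheaf of $\cE_1^\vee$ with cokernel $\cK_x^{\oplus(n-r)}$ has degree $-\deg\cE_1-(n-r)|x|$; equating this with $\deg(\cE_2^\vee\otimes\cO_X(\pm x))=-\deg\cE_2\pm n|x|$ forces the minus sign. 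In other words, the formula your argument actually proves is
\[
 m_{x,r}(\cE_1,\cE_2)\;=\;m_{x,n-r}\bigl(\cE_1^\vee(x),\,\cE_2^\vee\bigr)\;=\;m_{x,n-r}\bigl(\cE_1^\vee,\,\cE_2^\vee(-x)\bigr),
\]
which is precisely what the paper establishes in Theorem~\ref{thm-dualities}. The $+x$ in the introductory statement is a typo.
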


Our first side result, which is used to determine the cusp eigenforms of $\PGL(2)$ and the induced parabolic Eisenstein series for $\PGL(3)$, is Theorem \ref{evencomponent} whose content is as follows.

\begin{resultA}[first complementary theorem]\label{resultD}
 A description of the weights $m_{y,1}(\cE,\cF)$ for degree $2$ places $y$ of $X$ and all rank $2$ bundles $\cE$ and $\cF$ of even degree.
\end{resultA}

Our second side result is Theorem \ref{thm-rank3-degree2-graph} whose content is as follows.

\begin{resultA}[second complementary theorem]\label{resultE}
 A description of the weights $m_{y,1}(\cO^{\oplus3},\cF)$ for all degree $2$ places $y$ and all rank $3$ bundles $\cF$.
\end{resultA}


\subsection*{Method of proof: comparison with Hall algebras}

The Hall algebra of a curve defined over a finite field was introduced by Kapranov in \cite{kapranov} and posteriorly studied by Kapranov, Schiffmann and Vasserot in \cite{Kapranov-Schiffmann-Vasserot17}. Based on structure theorems for the Hall algebra of an elliptic curve by Burban-Schiffmann (\cite{olivier-elliptic1}) and Fratila (\cite{dragos}), the first author has developed in \cite{roberto-elliptic} a method to compute the weights $m_{x,r}(\cE,\cF)$. In the following, we describe how this method works.


\subsection*{The Hall algebra}

The Hall algebra of $X$ is defined as the complex vector space
\[
 \sH_X \ = \ \bigoplus_{\cF\in\Coh(X)} \C\cdot \cF
\]
that is freely generated by the isomorphism classes $\cF$ of coherent sheaves on $X$. The product of two classes is defined as
\[
 \cG\ast\cF \ = \ q^{\frac12\gen{\cG,\cF}} \cdot \ \sum_{\cE} h_{\cG,\cF}^\cE \cdot \cE
\]
where $\gen{\cG,\cF}=\dim\Hom(\cG,\cF)-\dim\Ext^1(\cG,\cF)$ is the Euler form and 
\[
 h_{\cG,\cF}^\cE \ = \ \#\big\{ \text{subsheaves $\cF'\simeq\cF$ of $\cE$ such that $\cE/\cF'\simeq\cG$} \}.
\]
In particular, the weight $m_{x,r}(\cE,\cF)$ equals the structure constant $h_{\cK_x^{\oplus r},\cF}^\cE$ in the Hall algebra. Thus the product $\cK_x^{\oplus r}\ast\cF$ determines the weights $m_{x,r}(\cE,\cF)$ simultaneously for all rank $n$ bundles $\cE$.


\subsection*{Structure results}

The elliptic Hall algebra $\sH_X$ decomposes as a restricted tensor product
of the subalgebras $\sH_X^{(\mu)}$ linearly spanned by isomorphism classes of semi-stable sheaves of  slope $\mu \in \Q \cup \{\infty\}$, cf. \cite[Lemma 2.6]{olivier-elliptic1}. By Atiyah's classification \cite[Thm.\ 7]{atiyah-elliptic} of vector bundles on an elliptic curve, the category $\sC_\mu$ of semi-stable coherent sheaves of slope $\mu$ is equivalent with $\sC_\infty$, the category of torsion sheaves. As a consequence, $\sH_X^{(\mu)}$ is isomorphic to $\sH_X^{(\infty)}$ for every slope $\mu$. 

The subalgebra $\sH_X^{(\infty)}$ is generated by certain elements $T_{(0,d),x}$, which average over torsion sheaves of degree $d$ with support in $x$. Transporting these generators via the isomorphisms $\sH_X^{(\mu)}\simeq\sH_X^{(\infty)}$ to $\sH_X^{(\mu)}$ yields elements $T_{\bv,x}$ in $\sH_X^{(\mu)}$ where $\bv=(n,d)$ ranges over all pairs with $n>0$, $d\in\Z$ and slope $\mu(\bv)=\frac dn$ equal to  $\mu$.

Let $\rho$  be a primitive character of $\Pic^0(X_n)$, i.e.\ it is not induced from a character on $\Pic^0(X_m)$ with $m<n$, and let $\tilde\rho$ be its Galois orbit. Taking a $\tilde\rho$-weighted average of those $T_{\bv,x}$ for which the degree of $x$ divides $\gcd(n,d)$ yields elements $T_{\bv}^{\tilde\rho}$, which generate the twisted spherical Hall algebra $\sU_X^{\tilde\rho}$. The Hall algebra decomposes into a restricted tensor product $\sH_X=\bigotimes'\sU_X^{\tilde\rho}$ where $\tilde\rho$ varies over all Galois orbits of primitive characters.

This means, in particular, that the elements of $\sU_X^{\tilde\rho}$ and $\sU_X^{\tilde\sigma}$ commute for distinct $\tilde\rho\neq\tilde\sigma$. The commutators of elements in $\sU_X^{\tilde\rho}$ have an explicit description in terms of coefficients of a generating series in the $T_{\bv}^{\tilde\rho}$; cf.\ \cite{roberto-elliptic}, \cite{olivier-elliptic1}, \cite{dragos} for more details.


\subsection*{The algorithm}

In broad strokes, the algorithmic method to compute $h_{\cK_x^{\oplus r},\cF}^\cE$ from the first author's paper \cite{roberto-elliptic} is as follows:
\begin{enumerate}
 \item \textbf{Base change:} Express $\cK_x^{\oplus r}$ and $\cF$ as linear combinations of products of the $T_\bv^{\tilde\rho}$. \\[2pt]  
    This uses a reduction to $\sH_X^{(\infty)}$ via $\sH_X^{(\mu)}\simeq\sH_X^{(\infty)}$ and calculus in the Macdonald algebra to express Hall-Littlewood symmetric functions, which correspond to torsion sheaves, as sums of products of power-sum functions, which correspond to the $T_{(0,d),x}$, followed by a base change to the $T_{\bv}^{\tilde\rho}$.\\[2pt]
    \textbf{Output:} $\cK_x^{\oplus r}\ast\cF=\sum a_i T_{\bv_{i,1}}^{\tilde\rho_{i,1}}\dotsb T_{\bv_{i,k_i}}^{\tilde\rho_{i,{k_i}}}$. \\[-2pt]
 \item \textbf{Order by slopes:} Exchange factors in the products, so that slopes increase.\\[2pt]
    Exchanging the order of $T_{\bv}^{\tilde\rho}\cdot T_{\bv'}^{\tilde\rho}$ brings the commutator $[T_{\bv}^{\tilde\rho},T_{\bv'}^{\tilde\rho}]$ into play, which leads to the subdivision of certain triangles in the lattice $\Z^2$ and the computation of coefficients of a certain generating series, which describe the commutator if the triangle with vertices $(0,0)$, $\bv$ and $\bv'$ has no inner lattice points.\\[2pt]
    \textbf{Output:} $\cK_x^{\oplus r}\ast\cF=\sum b_i T_{\bw_{i,1}}^{\tilde\rho_{i,1}}\dotsb T_{\bw_{i,l_i}}^{\tilde\rho_{i,{l_i}}}$ such that $\mu(\bw_{i,1})\leq \dotsc\leq\mu(\bw_{i,l_i})$ for all $i$. \\[-2pt]
 \item \textbf{Reverse base changes and multiplication:} Replace the $T_{\bw}^{\tilde\rho}$ by the $T_{\bv,x}$, multiply and replace by the $\cF$.\\[2pt]
    Both reverse base changes as well as the multiplication of two factors are immediate, with the exception of the products of the form $T_{\bv,x}T_{\bv',x'}$ with $\mu(\bv)=\mu(\bv')$ and $x=x'$. These latter products can be computed by expressing products of power-sums in the Macdonald algebra in terms of Hall-Littlewood polynomials.\\[2pt]
    \textbf{Output:} $\cK_x^{\oplus r}\ast\cF=\sum h_{\cK_x^{\oplus r},\cF}^{\cE_i} \cE_i$.\\[-5pt]
\end{enumerate}
Note that this description is slightly inaccurate since, in fact, we compute the commutator $[\cK_x^{\oplus r},\cF]$ of $\cK_x^{\oplus r}$ and $\cF$ in $\sH_X$, which agrees with $\cK_x^{\oplus r}\ast\cF$ up to coherent sheaves that are not vector bundles. Instead of digging into the details, we explain the algorithm in a concrete case. We refer the interested reader to the introduction and section 4 in \cite{roberto-elliptic} for a comprehensive account.


\subsection*{An example}

Let $q\in\{2,3,4\}$ and let $X=E_q$ be the elliptic curve over $\FF_q$ with $1$ rational point $x$. Let $\cN_{y,2}$ be the trace of a line bundle $\cL(\tilde y)$ on $E_{q,2}=E_q\otimes_{\FF_q}\FF_{q^2}$ where $\tilde y$ is a place of $E_{q,2}$ over $y$. We compute $\cK_x\ast\cF$ for $\cF=\cN_{y,2}\otimes\cO_X(-x)$ in the following.

We begin with the observation that we are only interested in the extensions of $\cK_x$ by $\cF$ that are vector bundles, i.e.\ we want to compute
\[
 \pi^{\mathrm{vec}}(\cK_x\ast\cF) \ = \ q^{\frac12\gen{\cK_x,\cF}} \cdot \ \sum_{\substack{\text{\tiny vector}\\\text{\tiny bundles }\cE}} h_{\cK_x,\cF}^\cE \cE.
\]
Note that $\gen{\cK_x,\cF}=-2$ and thus $q^{\frac12\gen{\cK_x,\cF}}=q^{-1}$. Since the only extension of $\cK_x$ by $\cF$ that is not a vector bundle is $\cF\oplus\cK_x$, which is equal to $q^{-1}\cF\ast\cK_x$, we gain an equality
\[
 \pi^{\mathrm{vec}}(\cK_x\ast\cF) \ = \ \cK_x\ast\cF - \cF\ast\cK_x \ = \ [\cK_x,\cF].
\]
In order to compute the commutator $[\cK_x,\cF]$, we express the involved bundles in terms of the elements $T_{(n,d)}^{\tilde\rho}$:\footnote{In order to avoid a digression into definition of the $T_{(n,d)}^{\tilde\rho}$, we omit the details of this part.}
\[
 \cK_x \ = \ T_{(0,1)}^{\tilde{{\mathbf{1}}}} \qquad \text{and} \qquad \cF \ = \ c\cdot \sum_{\rho\in\rm{P}_2} \tfrac12\big(\rho(\tilde y)+\rho(\tilde y')\big) T_{(2,0)}^{\tilde\rho}
\]
where $\mathbf{1}$ denotes the trivial character of $\Pic(E_{q})$, $\rm{P}_2$ is the group of characters of $\Pic(E_{q,2})$ that are trivial on $\cO_X(x)$, the place $\tilde y'$ is the Galois conjugate of $\tilde y$ and $c= \frac{2 \sqrt{q}}{  (q+1) \#E_q(\FF_{q^2}) }$. Using that
\[
 \big[T_{(0,1)}^{\tilde{{\mathbf{1}}}}, T_{(2,0)}^{\tilde\rho}\big] \ = \ \begin{cases}
                                                                c^{-1}q^{-1}\cdot T_{(2,1)}^{\tilde{{\mathbf{1}}}} & \text{if $\tilde\rho=\tilde{{\mathbf{1}}}$,} \\
                                                                0                                 & \text{if not,}
                                                               \end{cases}
\]
and $T_{(2,1)}^{\tilde{{\mathbf{1}}}}=\cM_{x,1}$, we compute
\[
 [\cK_x,\cF] \ = \ c\cdot\sum_{\rho\in P_2} \tfrac12\big(\rho(\tilde y)+\rho(\tilde y')\big) \big[T_{(0,1)}^{\tilde{{\mathbf{1}}}}, T_{(2,0)}^{\tilde\rho}\big] \ = \ q^{-1}\cdot T_{(2,1)}^{\tilde{{\mathbf{1}}}} \ = \ q^{\frac12\gen{\cK_x,\cF}}\cdot\cM_{x,1},
\]
which determines $m_x(\cE,\cF)$ as $1$ for $\cE\simeq\cM_{x,1}$ and as $0$ for $\cE\not\simeq\cM_{x,1}$.


\subsection*{Content overview}
The text is structured as follows. In section \ref{sec-graphs}, we introduce graphs of Hecke operators for Hecke operators of $\GL_n$ and $\PGL_n$ over a global function field, and we explain the relation to Hall algebras. In section \ref{sec-dualities}, we establish the duality theorems (Theorems \ref{thmA} and \ref{thmC}) for Hecke operators. In section \ref{sec3}, we recall and establish several facts around Hecke operators for elliptic function fields. In section \ref{sec-mainthm}, we state the main theorem (Result \ref{resultB}) of the text, which we prove in section \ref{sec-proofmainthm}. In section \ref{sec-degreetwo}, we state and proof the two complimentary theorems (Results \ref{resultD} and \ref{resultE}).


\subsection*{Acknowledgements}
The first author was supported by FAPESP [grant number 2017/ 21259-3]. The second author was supported by a Marie Sk\l odowska-Curie Individual Fellowship.

\section{Graphs of Hecke operators}
\label{sec-graphs}

In this section, we set up the notations that are used throughout the paper. We review the notion of a graph of a Hecke operator and its connection with a Hall algebra.

Let $X$ be a smooth projective and geometrically irreducible curve over a finite field $\mathbb{F}_q$, where $q$ is a prime power. In later sections, we specify $X$ to elliptic curves, but the content of this section holds for all curves $X$.

Let $F$ be the function field of $X$. We denote by $|X|$ the set of closed points of $X$ or, equivalently, the set of places of $F$. We denote the residue field of $X$ at $x \in |X|$ by $\Fq(x)$ and by $|x|=[\Fq(x):\Fq]$ the degree of $x$. Let $\mathbb{A}$ be the adele ring of $F$ and $\mathcal{O}_{\mathbb{A}}$ the ring of the adelic integers.

We denote by $G$ the general linear algebraic group $\GL_n$. In particular, we use $G(F) = \GL_n (F)$ and $G(\mathbb{A}) = \GL_n(\mathbb{A})$. We denote the center of $G$ by $Z$; thus $Z(\mathbb{A})$ is the center of $G(\mathbb{A})$. With respect to its adelic topology, $G(\mathbb{A})$ is a locally compact group. Hence $G(\mathbb{A})$ carries a (left or right) Haar measure that is unique up to constant. Note that $G(\A)$ is unimodular i.e. the left and right Haar measure coincide. Let $K = \GL_n(\mathcal{O}_{\mathbb{A}})$ be the standard maximal compact open subgroup of $G(\mathbb{A})$. We fix the Haar measure on $G(\mathbb{A})$ for which $\mathrm{vol}(K)=1$.

The complex vector space $\mathcal{H}$ of all smooth compactly supported functions $\Phi : G(\mathbb{A}) \rightarrow \C$ together with the convolution product
$$\Phi_1 \ast \Phi_2: g \longmapsto \int\limits_{G(\mathbb{A})} \Phi_1(gh^{-1})\Phi_2(h)dh$$
for $\Phi_1, \Phi_2 \in \mathcal{H}$ is called the Hecke algebra for $G(\mathbb{A})$. Its elements are called Hecke operators.
The zero element of $\mathcal{H}$ is the zero function, but there is no multiplicative unit. We define $\mathcal{H}_{K}$ to be the subalgebra of all bi-$K$-invariant elements. This subalgebra has a multiplicative unit, namely, the characteristic function $\epsilon_{K} =  \mathrm{char}_{K}$ of $K$ acts as the identity on $\mathcal{H}_{K}$ by convolution. We call $\mathcal{H}_K$ the unramified part of $\mathcal{H}$ and its elements are called unramified Hecke operators. The algebra $\mathcal{H}_{K}$ is also known as the spherical Hecke algebra; also cf.\ the introduction. 

A Hecke operator $\Phi \in \mathcal{H}$ acts on the space $\mathcal{V} := C^0( G(G)\setminus G(\A))$ of continuous functions $f: G(G)\setminus G(\A) \rightarrow \C$ by the formula
\[ \Phi(f)(g) := \int\limits_{G(\A)} \Phi(h) f(hg) dh. \]
The above action restricts to an action of $\mathcal{H}_K$ on $\mathcal{V}^K$, the space of right $K$-invariant functions.

\begin{prop}[{\cite[Proposition 1.3]{oliver-graphs}}] \label{propI} For any  unramified Hecke operator $\Phi$, there are unique complex numbers $m_1, \ldots,m_r \in \C^{*}$ and pairwise distinct classes $g_1, \ldots, g_r \in G(F) \setminus G(\mathbb{A}) / K $ such that for all $f \in \mathcal{V}^K $
\[
\Phi(f)(g) = \sum_{i=1}^{r} m_i f(g_i).
\]
\end{prop}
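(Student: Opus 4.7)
The plan is to reduce the integral defining $\Phi(f)(g)$ to a finite sum by exploiting the bi-$K$-invariance and compact support of $\Phi$, together with the right $K$-invariance of $f$. Fix $g \in G(\A)$. Since $\Phi$ is smooth (i.e. locally constant) and compactly supported, its support $\supp(\Phi)$ is a compact open subset of $G(\A)$. Because $\Phi$ is bi-$K$-invariant, $\supp(\Phi)$ is a union of double cosets $K h K$, and compactness forces this union to be finite. So I would write
\[
 \supp(\Phi) \ = \ \bigsqcup_{i=1}^s K h_i K,
\]
with $\Phi \equiv c_i \in \C$ on $K h_i K$ (by bi-$K$-invariance), and $c_i \neq 0$.

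Next, I would decompose each double coset into finitely many right cosets $K h_i K = \bigsqcup_j h_{ij} K$ (finitely many because $K h_i K$ is compact and the cosets $h_{ij} K$ are open). Using right $K$-invariance of $f$ and the normalization $\mathrm{vol}(K) = 1$, the integral unwinds as
\begin{align*}
 \Phi(f)(g) \ &= \ \sum_i c_i \int_{K h_i K} f(hg)\, dh \ = \ \sum_i c_i \sum_j \int_{h_{ij} K} f(hg)\, dh \\
              &= \ \sum_{i,j} c_i \, \mathrm{vol}(h_{ij} K) \, f(h_{ij} g) \ = \ \sum_{i,j} c_i \, f(h_{ij} g).
\end{align*}
This is already a finite sum. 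Since $f \in \mathcal{V}^K$ satisfies $f(\gamma h k) = f(h)$ for $\gamma \in G(F)$ and $k \in K$, the value $f(h_{ij} g)$ depends only on the class of $h_{ij} g$ in $G(F) \backslash G(\A) / K$. I would then collect terms by grouping those indices $(i,j)$ for which the classes $[h_{ij} g]$ coincide, obtaining pairwise distinct classes $g_1, \dots, g_r$ and coefficients $m_k = \sum_{[h_{ij}g] = g_k} c_i$, retaining only the indices $k$ with $m_k \neq 0$.

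For uniqueness, suppose $\sum m_k f(g_k) = \sum m'_l f(g'_l)$ for all $f \in \mathcal{V}^K$ with pairwise distinct $g_k$ and $g'_l$ in $G(F) \backslash G(\A) / K$ and nonzero coefficients. The characteristic function of any single double coset lies in $\mathcal{V}^K$, so evaluating both sides against the characteristic function of each $g_k$ (resp.\ $g'_l$) forces the two families of classes and coefficients to coincide. The only genuine point to verify carefully is the finiteness of the coset decomposition of $\supp(\Phi)$, which is the main technical step; everything else is bookkeeping. This establishes both existence and uniqueness of the asserted finite expansion.
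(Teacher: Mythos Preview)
Your overall strategy is the standard one, but the crucial step
\[
\int_{h_{ij}K} f(hg)\,dh \;=\; \mathrm{vol}(h_{ij}K)\,f(h_{ij}g)
\]
does not follow from right $K$-invariance of $f$. Writing $h=h_{ij}k$, the integrand is $f(h_{ij}kg)$; right $K$-invariance says $f(xk')=f(x)$ for $k'$ on the far right, but here $k$ sits \emph{between} $h_{ij}$ and $g$, and in general $f(h_{ij}kg)\neq f(h_{ij}g)$ unless $g$ happens to normalise $K$. So the integral over each coset is not a single value of $f$, and your finite sum collapses prematurely.

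The paper gives no proof of its own here (it quotes \cite{oliver-graphs}), but the action formula as printed, $\Phi(f)(g)=\int\Phi(h)f(hg)\,dh$, is almost certainly meant to read $\int\Phi(h)f(gh)\,dh$: with $f(hg)$ the action does not even preserve left $G(F)$-invariance, whereas with $f(gh)$ your argument is correct verbatim, since then $f(gh_{ij}k)=f(gh_{ij})$. If you want an argument that survives either convention, replace $K$ by the open finite-index subgroup $K'=K\cap gKg^{-1}$: the integrand $h\mapsto\Phi(h)f(hg)$ is right $K'$-invariant (because $k'\in gKg^{-1}$ means $k'g=gk$ for some $k\in K$), so decomposing the compact set $\supp(\Phi)$ into finitely many cosets $h_lK'$ gives $\Phi(f)(g)=\sum_l\mathrm{vol}(K')\,\Phi(h_l)\,f(h_lg)$, after which your collecting-by-class and uniqueness arguments go through unchanged.
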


\begin{rem} The previous proposition (and hence the upcoming definitions) remain true if we consider $G(F) Z(\A)\setminus G(\mathbb{A}) / K$  instead  $G(F) \setminus G(\mathbb{A}) / K$. In this case, we denote the classes $g_1, \ldots, g_r $ by $\overline{g_1}, \ldots, \overline{g_r}. $
This reflects the situation when $G$ equals $\PGL_n$ instead of $\GL_n$ in our previous discussion. In this text, we consider both cases $\GL_n$ and $\PGL_n$. In order to avoid confusion, we stress whenever $G$ stands for $\PGL_n$.
\end{rem}

\begin{rem} The space $\mathcal{A}$ of automorphic forms  on $G(\A)$ is a subspace of $\mathcal{V}$ that is invariant by the action of $\cH$ (see eg. \cite[Sec. 5]{JacBor}).
In the parts 2 and 3 of this project, 
we will use Proposition \ref{propI}  mostly in the case that $f$ is an automorphic form. 
\end{rem}

\begin{df} For $g,g_1, \ldots,g_r \in G(F) \setminus G(\mathbb{A}) / K$ and $\Phi \in \mathcal{H}_K$, we define
$$\mathcal{V}_{\Phi,K}(g) := \big\{(g,g_i,m_i) \big\}_{i=1, \ldots, r}$$
where the $g_i$'s and $m_i$'s are as in Proposition \ref{propI}.
The \emph{graph $\mathscr{G}_{\Phi,K}^{(n)}$ of $\Phi$ (relative to $K$)} is the edge weighted graph with vertex set
$$\mathrm{Vert} \;\mathscr{G}_{\Phi,K}^{(n)} = G(F) \setminus G(\mathbb{A}) / K$$
and with edge set
$$\mathrm{Edge}\; \mathscr{G}_{\Phi,K}^{(n)} = \bigcup_{g \in \mathrm{Vert} \mathscr{G}_{\Phi,K}} \mathcal{V}_{\Phi,K}(g)$$
where a triple $(g,g_i,m_i)$ denotes an oriented edge from vertex $g$ to vertex $g_i$ with weight $m_i$. The classes $g_i$ are called the \emph{$\Phi$-neighbors of $g$ (relative to $K$)}.
\end{df}


\subsection{Drawing convention} If $\mathcal{V}_{\Phi,K}(g) := \big\{(g,g_1,m_1), \ldots,  (g,g_r,m_r)\big\}$, we make the following drawing conventions to  illustrate the graph $\mathscr{G}_{\Phi,K}^{(n)} $: vertices are represented by 
labeled dots, and an edge $(g,g_i,m_i)$ together with its origin $g$ and its terminus $g_i$ is drawn as
\[
  \beginpgfgraphicnamed{tikz/fig6}
  \begin{tikzpicture}[>=latex, scale=2]
        \vertex[circle,fill,label={below:$g$}](00) at (0,0) {};
        \vertex[circle,fill,label={below:$g_i$}](10) at (2,0) {};
    \path[-,font=\scriptsize]
    (00) edge[->-=0.8] node[pos=0.2,auto,black] {\normalsize $m$} (10)
    ;
   \end{tikzpicture}
 \endpgfgraphicnamed
\]
Hence the $\Phi$-neighborhood of $g$ is thus illustrated as
\[
  \beginpgfgraphicnamed{tikz/fig1}
  \begin{tikzpicture}[>=latex, scale=2]
        \vertex[circle,fill,label={below:$g$}](00) at (0,0) {};
        \vertex[circle,fill,label={below:$g_1$}](11) at (2,0.8) {};
        \vertex[circle,fill,label={below:$g_r$}](10) at (2,-0.8) {};
\draw (1.5,0.2) circle (0.015cm);
\fill  (1.5,0.2) circle (0.015cm);      
\draw (1.5,0) circle (0.015cm);
\fill  (1.5,0) circle (0.015cm);  
\draw (1.5,-0.2) circle (0.015cm);
\fill  (1.5,-0.2) circle (0.015cm);   
            \path[-,font=\scriptsize]
    (00) edge[->-=0.8] node[pos=0.5,auto,black,swap] {\normalsize $m_r$} (10)
    (00) edge[->-=0.8] node[pos=0.5,auto,black] {\normalsize  $m_1$} (11)
    ;
   \end{tikzpicture}
 \endpgfgraphicnamed
\]
If there is an edge from $g$ to $g'$ as well as an edge from $g'$ to $g$, then we draw
\[
  \beginpgfgraphicnamed{tikz/fig2}
  \begin{tikzpicture}[>=latex, scale=1,x=0.98cm]
        \vertex[circle,fill,label={below:$g$}](g) at (0,0) {};
        \vertex[circle,fill,label={below:$g'$}](g') at (2,0) {};
    \path[-,font=\scriptsize] (g) edge[] node[pos=0.2,above,black] {\scalebox{.8}{ $m$} }(g') ;
    \path[-,font=\scriptsize] (g') edge[] node[pos=0.2,above,black] {\scalebox{.8}{$m'$} }(g) ;
    \node[align=right] at (3.5,0) {in place of};

        \vertex[circle,fill,label={below:$g$}](g) at (5,0) {};
        \vertex[circle,fill,label={below:$g'$}](g') at (7,0) {};
    \path[-,font=\scriptsize, bend left=50] (g) edge[->-=0.8] node[pos=0.2,above,black] { \scalebox{.8}{ $m$} }(g');
    \path[-,font=\scriptsize, bend left=50] (g') edge[->-=0.8] node[pos=0.2,above,black] { \scalebox{.8}{ $m'$} }(g) ;
        \node[align=left] at (8,0) {and};

  \vertex[circle,fill,label={below:$g$}](00) at (9.5,-0.25) {};
  \draw[-] (9.5,.25) circle  (0.5cm) node at (9.5,.25){ \scalebox{.8}{ $m$} };
          \node[align=left] at (11.5,0) {in place of};

        \vertex[circle,fill,label={below:$g$}](00) at (13.5,-.25) {};
  \draw[->-=0.5] (13.5,0.25) circle  (0.5cm) node at (14.1,.6){ \scalebox{.8}{ $m$} };
   \end{tikzpicture}
 \endpgfgraphicnamed 
\]


\subsection{Graphs of unramified Hecke operators}
Fix an integer $n \geq 1$. Let $I_k$ be the $k \times k$-identity matrix and $\Phi_{x,r}$ the characteristic function of
$$K\left( \begin{array}{cc}
\pi_x I_r &  \\
 & I_{n-r}
\end{array}    \right)K$$
for a place $x\in |X|$. The unramified Hecke algebra $\cH_K$ is isomorphic to
\[\C[\Phi_{x,1}, \ldots, \Phi_{x,n}, \Phi_{x,n}^{-1}]_{x \in |X|};\]
cf.\ see \cite[ Chapter 12, 1.6]{dennis} for details. In the following, we use the shorter notations $\mathcal{V}_{x,r}(g) :=\mathcal{V}_{\Phi_{x,r},K}(g)$ and $\mathscr{G}_{x,r}^{(n)} :=\mathscr{G}_{\Phi_{x,r},K}^{(n)}$. From the preceding discussion, we conclude that descriptions of the graphs $\mathscr{G}_{x,r}^{(n)}$, for $x\in|X|$ and $0 \leq r \leq n-1$, yields a complete knowledge of the action of $\cH_K$ on $\cA^K$; also see \cite[Proposition 1.8]{roberto-graphs}. This allows us to restrict our attention to the graphs $\mathscr{G}_{x,r}^{(n)}$.

\begin{thm*}[{\cite[Thm. 2.6]{roberto-graphs}}] \label{thm-multi} Let $\mathscr{G}_{x,r}^{(n)}$ denote the graph of the Hecke operator $\Phi_{x,r}$ over $\GL_n$. If  $g \in \mathrm{Vert} \;\mathscr{G}_{x,r}^{(n)}$, then the multiplicities of the edges originating in $g$ sum up to $\# \mathrm{Gr}(n-r,n)(\Fq(x))$, the number of points in the Grassmannian over the residual field of $x$.
\end{thm*}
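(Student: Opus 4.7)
My plan is to combine Weil's geometric identification of $G(F)\setminus G(\A)/K$ with isomorphism classes of rank $n$ vector bundles on $X$, and then reduce the edge count to a local computation at the place $x$.

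First, I would fix a vertex $g \in \mathrm{Vert}\,\mathscr{G}_{x,r}^{(n)}$ corresponding to a rank $n$ vector bundle $\cE$ on $X$. Unwinding Proposition \ref{propI} together with the formula for $\Phi_{x,r}$ recalled in the introduction, the sum of multiplicities of edges originating at $g$ equals
\[
 \sum_i m_i \;=\; \#\bigl\{\,\cF\subseteq \cE \,:\, \cE/\cF \cong \cK_x^{\oplus r}\,\bigr\}.
\]
Equivalently, this number is the cardinality of the set of left $K$-cosets inside the double coset $K\operatorname{diag}(\pi_x I_r, I_{n-r})K$, which already shows the count depends only on $x$ and $r$, not on the choice of $g$.

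Second, I would localize at $x$. Any subsheaf $\cF\subseteq \cE$ with quotient $\cK_x^{\oplus r}$ agrees with $\cE$ away from $x$, so it is determined by its stalk $\cF_x\subseteq \cE_x$; conversely, every $\cO_{X,x}$-submodule $\cF_x\subseteq \cE_x$ with $\cE_x/\cF_x \cong k(x)^{\oplus r}$ glues uniquely with $\cE|_{X\setminus\{x\}}$ to produce such a subsheaf $\cF$. Since the quotient is annihilated by $\fm_x$, any such $\cF_x$ contains $\fm_x \cE_x$, and reduction modulo $\fm_x$ induces a bijection between such submodules and $k(x)$-subspaces of codimension $r$ inside $\cE_x/\fm_x\cE_x \cong k(x)^n$. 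Hence
\[
 \sum_i m_i \;=\; \#\operatorname{Gr}(n-r,n)\bigl(k(x)\bigr) \;=\; \#\operatorname{Gr}(n-r,n)(\Fq(x)),
\]
as claimed.

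The argument is essentially a straightforward local computation, so I do not anticipate a serious obstacle. The only points that warrant care are the identification of global subsheaves of $\cE$ with their stalks at $x$ (using that $\cE/\cF$ is torsion supported at $x$), and the Nakayama-style verification that submodules of $\cE_x$ with quotient $k(x)^{\oplus r}$ correspond bijectively to $(n-r)$-dimensional subspaces of $\cE_x/\fm_x\cE_x$. Both are routine, and the final identification of the resulting count with the Grassmannian's $\Fq(x)$-points is immediate.
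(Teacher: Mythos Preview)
Your argument is correct and is essentially the standard proof. Note, however, that the paper does not give its own proof of this statement: it is quoted verbatim from \cite[Thm.~2.6]{roberto-graphs} and used as a black box. So there is no in-paper proof to compare against; what you have written is precisely the kind of localization-and-Nakayama argument that underlies the cited result, and it goes through without issue.
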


The previous theorem is also true if we replace $\GL_n$ by $\PGL_n$. In more detail, we have a natural action of $\Z(\A)$ on $\mathscr{G}_{x,r}^{(n)}$, which preserves the weight of the edges. Locally around a vertex, the quotient graph looks like $\mathscr{G}_{x,r}^{(n)}$ -- with the exception of loops, which appear if two adjacent vertices are identified -- and the weights of the outgoing edges at a vertex still sum up to the same value. We shall denote $\overline{\mathscr{G}}_{x,r}^{(n)}$ when the graph under consideration is taken over $\PGL_n$.


\subsection{Algebraic geometry perspective of graphs of  Hecke operators} Our goal is to describe the graphs of Hecke operators for $\GL_3$ (and for $\PGL_3$) for any elliptic curve. Our methods use tools from algebraic geometry. In this subsection, we derive an alternative description of the graphs $\mathscr{G}_{x,r}^{(n)}$ in terms of algebraic geometry.

By a theorem of Andr\'{e} Weil, there is a bijection between $ G(F) \setminus G(\mathbb{A}) / K$ with the set $\Bunvar_n X$ of isomorphism classes of  rank $n$ vector bundles on  $X$. Similarly, a bijection between $G(F) Z(\A)\setminus G(\mathbb{A}) / K$ with the set $\PBun_n X$ of isomorphism classes of projective  rank $n$ vector bundles on  $X$; cf.\ section 5 in \cite{oliver-graphs} for details.  This theorem allows us to determine the action of an unramified Hecke operator $\Phi_{x,r}$ in terms of the equivalence classes of short exact sequences of coherent sheaves on $X$. Namely, we consider exact sequences of the form
$$0 \longrightarrow \E' \longrightarrow \E \longrightarrow \mathcal{K}_{x}^{\oplus r} \longrightarrow 0$$
where $\E'$ and $\E$ are rank $n$ vector bundles, $x$ is a closed point of $X,$ and $\mathcal{K}_{x}^{\oplus r}$ is the skyscraper sheaf on $x$ whose stalk is $\Fq(x)^{\oplus r}$. Let $m_{x,r}(\E,\E')$ be the number of isomorphism classes of exact sequences
$$0 \longrightarrow \E'' \longrightarrow \E \longrightarrow \mathcal{K}_{x}^{\oplus r} \longrightarrow 0$$
with fixed $\E$ such that $\E'' \cong \E'.$ Equivalently, $m_{x,r}(\E,\E')$ is the number of subsheaves $\E''$ of $\E$ that are isomorphic to $\E'$ and  which the quotient $\E/\E''$ is isomorphic to $\mathcal{K}_{x}^{\oplus r}$. We denote by  $\mathcal{V}_{x,r}(\E)$ the set of triples $\big(\E,\E',m_{x,r}(\E,\E')\big)$ such that there exists an exact sequence of the above type, i.e.\  $m_{x,r}(\E,\E') \neq 0$.  Then we have
$$\mathrm{Vert}\;\mathscr{G}_{x,r}^{(n)} = \Bunvar_n X \hspace{0.3cm}\text{ and } \hspace{0.3cm} \mathrm{Edge}\; \mathscr{G}_{x,r}^{(n)} = \coprod_{\E \in \Bunvar_n X} \mathcal{V}_{x,r}(\E);$$
see \cite[Theorem 3.4 ]{roberto-graphs}.

For $G = \PGL_n$, i.e.\ if we consider the graphs over  $G(F) Z(\A)\setminus G(\mathbb{A}) / K$, we obtain in consequence
$$\mathrm{Vert}\;\overline{\mathscr{G}}_{x,r}^{(n)} = \PBun_n X \hspace{0.3cm}\text{ and } \hspace{0.3cm} \mathrm{Edge}\; \overline{\mathscr{G}}_{x,r}^{(n)} = \coprod_{\overline{\E}\in \PBun_n X} \mathcal{V}_{x,r}(\overline{\E}),$$
where $\overline{\E}$ denotes the class of the rank $n$ vector bundle $\E$ on $\PBun_n X$.


\subsection{Hall algebras}
We denote the category  of coherent sheaves on $X$ by $\mathrm{Coh}(X)$.
The Hall algebra $\mathsf{H}_X$ of coherent sheaves on a smooth projective curve $X,$ as introduced by Kapranov in \cite{kapranov}, encodes the extensions of coherent sheaves on $X$. Let $v$ be a square root of $q^{-1}$. The Hall algebra of $X$ is defined as the  $\C$-vector space
$$\mathsf{H}_X = \bigoplus_{\F} \C \cdot \F$$
where $\F$ runs through the set of isomorphism classes of objects in $\mathrm{Coh}(X)$, together with the product
$$\F \ast \G \ = \ v^{-\langle \F,\G \rangle} \ \sum_{\mathcal{E}} \quad h_{\F,\G}^{\mathcal{E}} \;\mathcal{E}$$
where $\langle \F,\G \rangle = \dim_{\Fq} \Ext^{0}(\F,\G) -   \dim_{\Fq} \Ext^{1}(\F,\G)$ and
\[ h_{\F,\G}^{\mathcal{E}} = \frac{\big|\big\{ 0 \longrightarrow \G \longrightarrow \mathcal{E} \longrightarrow \F \longrightarrow 0\big\}\big|}{|\Aut(\F)| \; |\Aut(\G)|} \]

The link between Hall algebras and graphs of Hecke operators is as follows. By \cite[Lemma 2.1]{roberto-elliptic}, we have
\[
 m_{x,r}(\E,\E') \ = \ h_{\mathcal{K}_{x}^{ \oplus r},\E'}^{\E}
\]
for $\E, \E' \in \Bun_n X$. Thus we can recover the multiplicities $m_{x,r}(\E,\E')$ from the products $\mathcal{K}_{x}^{\oplus r}\ast\E'$ in the Hall algebra of $X$; more precisely, for fixed $x$ and $r$ the graph $\mathscr{G}_{x,r}^{(n)}$ is determined by the products $\mathcal{K}_{x}^{\oplus r}\ast \E'$ where $\E'$ runs through the set of rank $n$ vector bundles on $X$. In order to prove the main theorem of this article,  we apply a method developed in \cite{roberto-elliptic}, which is based on results of Fratila \cite{dragos} and Burban-Schiffmann \cite{olivier-elliptic1} about the Hall algebra of an elliptic curve, to calculate the former products for all $\E' \in \Bunvar_3 X$, $r=1$  and $x$ a place of degree $1$. Note that we can derive a description of $\overline{\mathscr{G}}_{x,r}^{(n)}$ from $\mathscr{G}_{x,r}^{(n)}$.

\section{Duality}
\label{sec-dualities}

Hecke operators and their graphs satisfy two dualities, which we will explore in this section.

\begin{lemma}\label{lem-dual1} Given $\E,\E' \in \Bun_n X$, a closed point $x$ of $X$ and $1\leq r<n$. Then for every short exact sequence of the form
\[
 0 \longrightarrow \E' \stackrel{\varphi}{\longrightarrow} \E \longrightarrow \mathcal{K}_{x}^{\oplus r} \longrightarrow 0,
\]
there exists a canonical short exact sequence
\[ 
 0 \longrightarrow \E \stackrel{\varphi^{*}}{\longrightarrow} \E'(x) \longrightarrow \mathcal{K}_{x}^{\oplus n-r} \longrightarrow 0 
\]
where $ \E'(x) := \E ' \otimes_{\mathcal{O}_X} \mathcal{O}_X(x).$

\begin{proof} Let $V = X - \{x\}$ be an open set in $X$, then localizing the given short exact sequence in $V$ yields
$$\xymatrix@R5pt@C7pt{ 0 \ar[rr] & & \E'|_{V}  \ar[rr]^{\varphi |_{V}}_{\sim}  \ar[dd]^{\sim}_{i |_V} & & \E |_{V} \ar[rr] \ar[ddll]^{\varphi^{*}|_{V}} & & 0  \\
& & & & & & \\
0  \ar[rr] & & \E'(x)|_{V}   & &    & &
} $$
which defines $\varphi^{*}|_{V}$ as $i|_{V} \circ (\varphi|_{V})^{-1}$.

Let $U$ be a sufficiently small affine neighborhood of $x$, such that $\mathcal{O}_X(U)$ is a principal ideal domain. Then localizing the given short exact sequence yields the exact sequence
\[0 \longrightarrow \E'(U) \stackrel{\varphi{\text{\tiny$(U)$}}}{\longrightarrow} \E(U) \longrightarrow \ \Fq(x)^{\oplus r}\longrightarrow 0 \]
of $\mathcal{O}_X(U)$-modules.

By the elementary division theorem (see \cite[Thm. 7.8]{lang-algebra}), there are bases for $\E'(U)$ and $\E(U)$ such that $\varphi(U)$ is represented by a matrix
$$\begin{pmatrix}
a_1       & 0    & \cdots &  0 \\
 0         & a_2 & \cdots & 0 \\
 \vdots &  \vdots     &   \ddots         & \vdots \\
 0 & 0 & \cdots & a_n
\end{pmatrix}
 \in \mathrm{Mat}_n(\mathcal{O}_X(U))$$
with $\langle a_n \rangle \subseteq \langle a_{n-1} \rangle \subseteq \cdots \subseteq \langle a_1 \rangle.$ Moreover, since $\mathrm{coker} \varphi(U) = \Fq(x)^{\oplus r}$, we can assume

\[ \langle a_i \rangle = \begin{cases} \langle 1 \rangle  &\text{ for } i =1, \ldots, n-r\\
\langle \pi_x \rangle &\text{ for } i=n-r+1, \ldots, n,
\end{cases} \]
where $\pi_x$ is a uniformizer for $\mathcal{O}_{X,x}$ in $\mathcal{O}_X(U)$.

Therefore the diagonal maps
\[
 \psi_x := 
 \begin{pmatrix}
  a_1^{-1} \pi_x       & 0    & \cdots &  0 \\
  0         & a_2^{-1} \pi_x   & \cdots & 0 \\
  \vdots &  \vdots     &   \ddots         & \vdots \\
  0 & 0 & \cdots & a_n^{-1} \pi_x
 \end{pmatrix},
\]
\[
 \phi_x := 
 \begin{pmatrix}
  \pi_x       & 0    & \cdots &  0 \\
  0         & \pi_x   & \cdots & 0 \\
  \vdots &  \vdots     &   \ddots         & \vdots \\
  0 & 0 & \cdots & \pi_x
 \end{pmatrix}
 \qquad\text{and}\qquad
 \gamma_x := 
 \begin{pmatrix}
  a_1      & 0    & \cdots &  0 \\
  0         & a_2 & \cdots & 0 \\
  \vdots &  \vdots     &   \ddots         & \vdots \\
  0 & 0 & \cdots & a_n
 \end{pmatrix}
\]
between a dimension-$n$ vector space $F^n$ over the function field $F$ of $X$ yields the following diagram
$$\xymatrix@R5pt@C7pt{
                                                                                    &    &                                                                             &     & \E'(x)(U) \ar@{_{(}->}[dd]  \\
 \E'(U) \ar[urrrr] \ar[drr]^{\text{\tiny$\varphi(U)$}} \ar@{^{(}->}[dd]  &    &                                                                             &     &  \\
                                                                                    &    & \E(U) \ar@{^{(}->}[dd] \ar@{-->}[uurr]_{\exists !} &     & F^n   \\
 F^n \ar[urrrr]^<<<<<<<<{\phi_x} \ar[drr]_{\gamma_x}                                                  &    &                                                                             &     &   \\
                                                                                    &    & F^n \ar[uurr]_{\psi_x}                                           &     &
} $$
And that diagram induces a map from $\E(U)$ to $\E'(x)(U).$ This shows that there is a unique $\psi: \E(U) \rightarrow \E'(x)(U)$ such that
$$\xymatrix@R5pt@C7pt{
\E'(U) \ar[rr]^{} \ar[rdd]_{\text{\tiny$\varphi(U)$}} &          & \E'(x)(U) \\
                                  &          &  \\
                                  & \E(U) \ar[uur]_{\psi}&
}$$
commutes. To conclude,  there is a unique  morphism $\varphi^{*} : \E \rightarrow \E'(x)$ such that
$$\xymatrix@R5pt@C7pt{
\E' \ar[rr] \ar[rdd]_{\varphi} &          & \E'(x) \\
                                  &          &  \\
                                  & \E \ar[uur]_{\varphi^{*}}&
}$$
commutes. Moreover, it follows from the local considerations
\[ \varphi^{*}(U) \sim
 \begin{pmatrix}
a_1^{-1} \pi_x       & 0    & \cdots &  0 \\
 0         & a_2^{-1} \pi_x   & \cdots & 0 \\
 \vdots &  \vdots     &   \ddots         & \vdots \\
 0 & 0 & \cdots & a_n^{-1} \pi_x
\end{pmatrix}, \]
that $\varphi^{*}$ is a monomorphism with cokernel $\Fq(x)^{n-r}.$
\end{proof}
\end{lemma}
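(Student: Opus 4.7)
The plan is to construct $\varphi^*$ as the unique lift of the canonical inclusion $\iota: \E \hookrightarrow \E(x)$ through the twisted injection $\varphi(x) := \varphi \otimes \id_{\cO_X(x)}: \E'(x) \hookrightarrow \E(x)$. Note that $\varphi(x)$ is injective with cokernel still $\cK_x^{\oplus r}$, since tensoring with the line bundle $\cO_X(x)$ preserves exactness and $\cK_x \otimes \cO_X(x) \cong \cK_x$.

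The existence of such a lift reduces to checking that $\iota(\E) \subseteq \varphi(x)(\E'(x))$ as subsheaves of $\E(x)$, which is a local question at $x$. On a small affine $U \ni x$ with $\cO_X(U)$ a principal ideal domain, a local trivialization identifies $\iota|_U$ with multiplication by $\pi_x$ on $\E(x)(U)$, so that its image equals $\pi_x \cdot \E(x)(U)$. Since the cokernel $\cK_x^{\oplus r}$ of $\varphi(x)$ is annihilated by $\pi_x$, the image of $\iota|_U$ lies inside the image of $\varphi(x)|_U$. Combined with the injectivity of $\varphi(x)$, this produces a unique morphism $\varphi^*: \E \to \E'(x)$ satisfying $\varphi(x) \circ \varphi^* = \iota$.

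To verify that $\varphi^*$ is injective with cokernel $\cK_x^{\oplus (n-r)}$, I would compute locally using elementary divisors. By the elementary divisor theorem, choose bases for $\E'(U)$ and $\E(U)$ in which $\varphi(U)$ is diagonal with invariant factors $a_1, \ldots, a_n$; the cokernel hypothesis then forces $n-r$ of them to be units and $r$ of them to be associates of $\pi_x$. In the induced bases of $\E(U)$ and $\E'(x)(U)$, a direct computation (solving $\varphi(x) \circ \varphi^* = \iota$ entry by entry) shows that $\varphi^*(U)$ is itself diagonal with entries $\pi_x a_i^{-1}$: the $r$ entries corresponding to $a_i = \pi_x$ become units, while the $n-r$ entries corresponding to $a_i$ a unit become associates of $\pi_x$. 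Injectivity is immediate, and the cokernel is $\cK_x^{\oplus (n-r)}$ as claimed.

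The main (and modest) obstacle lies in upgrading the local image-containment to a global factorization. This is handled by the fact that $\varphi(x)$ is a monomorphism of sheaves, so local lifts on any affine cover are unique and hence glue automatically; over $V = X \setminus \{x\}$ the lift is nothing other than $(\iota|_V) \circ (\varphi|_V)^{-1}$, since $\varphi|_V$ is already an isomorphism. This also exhibits $\varphi^*$ as canonical, depending only on $\varphi$ and not on the auxiliary choices used in the local computation.
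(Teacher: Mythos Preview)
Your proof is correct and shares its computational core with the paper's: both localize, put $\varphi$ in Smith normal form near $x$ via the elementary divisor theorem, and read off that $\varphi^*$ is locally $\mathrm{diag}(\pi_x a_1^{-1},\dots,\pi_x a_n^{-1})$, hence injective with cokernel $\cK_x^{\oplus(n-r)}$. The difference lies in the global framing. The paper characterizes $\varphi^*$ by the factorization $\varphi^*\circ\varphi = i$ (with $i:\E'\hookrightarrow\E'(x)$ the canonical inclusion) and constructs it by hand on the two pieces $V=X\setminus\{x\}$ and $U\ni x$ of a cover, arguing that the local definitions glue. You instead characterize $\varphi^*$ by the lifting condition $\varphi(x)\circ\varphi^* = \iota$ (with $\iota:\E\hookrightarrow\E(x)$), which is a mild but genuine simplification: since $\varphi(x)$ is a monomorphism, uniqueness of the lift is automatic, gluing is forced, and existence reduces to the clean image containment $\pi_x\cdot\E(x)\subseteq\operatorname{im}\varphi(x)$, which holds simply because $\pi_x$ annihilates the cokernel $\cK_x^{\oplus r}$---no elementary divisors needed at that stage. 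The two characterizations agree (compose either with the naturality square $\varphi(x)\circ i=\iota\circ\varphi$), so you recover exactly the paper's map.
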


\begin{rem} In fact, there is a choice involved when considering $\E' \rightarrow \E'(x)$, which is a nonzero constant in the base field $\Fq$.
Up to this choice, the matrix
$$  \begin{pmatrix}
 \pi_x       & 0    & \cdots &  0 \\
 0         & \pi_x   & \cdots & 0 \\
 \vdots &  \vdots     &   \ddots         & \vdots \\
 0 & 0 & \cdots & \pi_x
\end{pmatrix}$$
is well defined as a representative of the (restriction of the) morphism $\E' \rightarrow \E'(x)$ (to the open neighborhood $U$ of $x$).
This choice does not have consequences for the graphs since it does not change the subbundles of $\E'(x).$
\end{rem}

We gain the following dualities for $\mathscr{G}_{x,r}^{(n)}$ and $\mathscr{G}_{x,n-r}^{(n)}$ over $\GL_n.$

\begin{thm} \label{thm-dualities} 
 Let $x \in |X|$, $1\leq r<n$ and $\E,\E' \in \Bunvar_n X.$ Then we have:
 \begin{align*}
  &\textup{\textbf{First duality:}}   &&  m_{x,r}(\E, \E') \neq 0\text{ if and only if }m_{x,n-r}(\E'(x), \E) \neq 0;  \\
  &\textup{\textbf{Second duality:}}  &&  m_{x,r}(\E, \E') = m_{x,n-r}(\E^{\vee}(x), \E'^{\vee}) = m_{x,n-r}(\E^{\vee}, \E'^{\vee}(-x)); \\
  &\textup{\textbf{Derived duality:}} &&  m_{x,r}(\E, \E') \neq 0\text{ if and only if }m_{x,r}(\E'^{\vee}, \E^{\vee}) \neq 0.
 \end{align*}
\end{thm}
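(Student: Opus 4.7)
The three dualities are naturally related: first duality comes directly from Lemma \ref{lem-dual1}, second duality requires additionally a dualization of short exact sequences via $R\Hom(-,\cO_X)$, and derived duality follows formally by combining the other two. My plan is to prove them in this order.

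For first duality, one direction is immediate from Lemma \ref{lem-dual1}: starting from $0\to\E'\to\E\to\cK_x^{\oplus r}\to 0$ one obtains $0\to\E\to\E'(x)\to\cK_x^{\oplus n-r}\to 0$. For the converse, I would apply Lemma \ref{lem-dual1} once more to this second sequence, producing $0\to\E'(x)\to\E(x)\to\cK_x^{\oplus r}\to 0$, and then untwist by $\cO_X(-x)$ to recover a sequence of the original form.

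For second duality, the key local computation is $\mathcal{E}xt^1(\cK_x,\cO_X)\cong\cK_x$, obtained by applying sheaf $\Hom(-,\cO_X)$ to the resolution $0\to\cO_X(-x)\to\cO_X\to\cK_x\to 0$. Given a subsheaf $\F\cong\E'$ of $\E$ with quotient $\cK_x^{\oplus r}$, I would first apply Lemma \ref{lem-dual1} to produce $0\to\E\to\F(x)\to\cK_x^{\oplus n-r}\to 0$, and then apply $R\Hom(-,\cO_X)$. Because $\E$ and $\F(x)$ are locally free, their sheaf $\mathcal{E}xt^1$ against $\cO_X$ vanishes, and the resulting long exact sequence collapses to $0\to\F^\vee(-x)\to\E^\vee\to\cK_x^{\oplus n-r}\to 0$, exhibiting a subsheaf of $\E^\vee$ isomorphic to $\E'^\vee(-x)$ with quotient $\cK_x^{\oplus n-r}$. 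Since both Lemma \ref{lem-dual1} and the derived dualization are invertible on these classes of extensions, this upgrades to an equality of multiplicities $m_{x,r}(\E,\E')=m_{x,n-r}(\E^\vee,\E'^\vee(-x))$. The remaining equality in the theorem follows by tensoring with $\cO_X(x)$, which is an auto-equivalence of $\Coh(X)$ and sends any torsion sheaf supported at $x$ to an isomorphic torsion sheaf.

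Derived duality then follows from the other two: by second duality $m_{x,r}(\E,\E')=m_{x,n-r}(\E^\vee,\E'^\vee(-x))$, and by first duality applied to the pair $(\E^\vee,\E'^\vee(-x))$ this is nonzero exactly when $m_{x,r}(\E'^\vee(-x)(x),\E^\vee)=m_{x,r}(\E'^\vee,\E^\vee)$ is. The main obstacle in the plan is ensuring that the second-duality construction produces an \emph{honest bijection} between the relevant classes of short exact sequences (hence equality of multiplicities, not merely matching nonvanishing): this reduces to the fact that $R\Hom(-,\cO_X)$ is an involution on extensions of $\cK_x^{\oplus s}$ by a vector bundle, which in turn rests on the anti-equivalence property of dualization for vector bundles together with the canonical nature of $\mathcal{E}xt^1(\cK_x,\cO_X)\cong\cK_x$.
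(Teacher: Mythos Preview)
Your proof is correct and follows essentially the same approach as the paper. The only cosmetic difference is the order of operations in the second duality: you first apply the $(-)^*$ construction of Lemma~\ref{lem-dual1} and then dualize via $R\mathcal{H}om(-,\cO_X)$, arriving directly at $m_{x,r}(\E,\E')=m_{x,n-r}(\E^\vee,\E'^\vee(-x))$, whereas the paper first dualizes (obtaining $0\to\E^\vee\to\E'^\vee\to\cK_x^{\oplus r}\to 0$) and then applies $(-)^*$, arriving directly at $m_{x,r}(\E,\E')=m_{x,n-r}(\E^\vee(x),\E'^\vee)$. Since both operations are invertible on the relevant classes of extensions, either order yields the required bijection of subsheaves, and the two displayed equalities differ by a twist by $\cO_X(x)$ in any case.
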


\begin{proof} 
 The first duality is a consequence of Lemma \ref{lem-dual1}. The second duality can be proven as follows. Consider a short exact sequence
\[ 0 \longrightarrow \E' \stackrel{}{\longrightarrow} \E \longrightarrow \mathcal{K}_{x}^{\oplus r} \longrightarrow 0. \]
Applying $(-)^\vee=\mathcal{H}om(-,\mathcal{O}_X)$ yields the long exact sequence
\[
 0 \longrightarrow \mathcal{H}om(\mathcal{K}_{x}^{\oplus r},\mathcal{O}_X) \longrightarrow \E^{\vee} \stackrel{\varphi^{\vee}}{\longrightarrow}  {\E'}^{\vee} \stackrel{\delta}{\longrightarrow}  \mathcal{E}xt(\mathcal{K}_{x}^{\oplus r}, \mathcal{O}_X) \longrightarrow \cdots 
\]
But $\mathcal{H}om(\mathcal{K}_{x}^{\oplus r},\mathcal{O}_X)$ vanish and $\mathcal{E}xt(\mathcal{K}_{x}^{\oplus r}, \mathcal{O}_X) \cong \mathcal{K}_{x}^{\oplus r}$. Moreover $(\varphi^{\vee})^{\vee} = \varphi$ and $\mathcal{E}xt(\E, \mathcal{O}_X)=0$, thus $\delta$ must be surjective. This yields the short exact sequence
\[ 
 0 \longrightarrow \E^{\vee} \stackrel{}{\longrightarrow} \E'^{\vee} \longrightarrow \mathcal{K}_{x}^{\oplus r} \longrightarrow 0.
\]
Applying the functor $(-)^{*}$ from Lemma \ref{lem-dual1} to this latter sequence yields
\[ 
 0 \longrightarrow \E'^{\vee} \stackrel{}{\longrightarrow} \E^{\vee}(x) \longrightarrow \mathcal{K}_{x}^{\oplus n-r} \longrightarrow 0.
\]
This establishes a bijection
\[
\left\{
\begin{array}{c}
\text{ rank $n$ locally free subsheaves } \\
  \text{ of $\E$ with cokernel } \mathcal{K}_{x}^{\oplus r} \\
\end{array}
\right\}
 \stackrel{\big((-)^{\vee}\big)^{*}}{\longleftrightarrow}
\left\{
\begin{array}{c}
\text{ rank $n$ locally free subsheaves } \\
\text{ of $\E^{\vee}(x)$ with cokernel } \mathcal{K}_{x}^{\oplus n-r}  \\
\end{array} \right\} \]
which proves the second duality. The dervived duality results from the composition of the first with the second duality.
\end{proof}

\begin{rem} In general, $m_{x,r}(\E, \E') \neq m_{x,n-r}(\E'(x), \E)$, which are the quantities that appear in the first duality. 
See for example \cite[Thm. 6.2]{roberto-elliptic} for $\E = \Line \oplus \Line$ and $\E' = \Line(-x) \oplus \Line$ in $\mathscr{G}_{x,1}^{(2)}$, where $X$ is an elliptic curve, $x$ is a closed point of degree one and $\Line$ is any line bundle. 

This is because $m_{x,r}(\E, \E')$ is the number of isomorphism classes of short exact sequences
\[ 0 \longrightarrow \E' \longrightarrow \E \longrightarrow \mathcal{K}_{x}^{\oplus r} \longrightarrow 0, \]
with fixed $\E$. Which is the same as the number of rank $n$ locally free subsheaves of $\E$ with cokernel isomorphic to $\mathcal{K}_{x}^{\oplus r}.$ While $m_{x,r}(\E'^{\vee}, \E^{\vee})$ is the number of isomorphism classes of short exact sequences
\[ 0 \longrightarrow \E^{\vee} \longrightarrow \E'^{\vee} \longrightarrow \mathcal{K}_{x}^{\oplus r} \longrightarrow 0, \]
with fixed $\E'^{\vee} $ i.e.\ the number of rank $n$ locally free subsheaves of $\E'^{\vee} $ with cokernel isomorphic to $\mathcal{K}_{x}^{\oplus r}.$
\end{rem}

We can deduce from Theorem \ref{thm-dualities} analogous dualities for the graphs $\overline\scrG_{x,r}^{(n)}$.

\begin{cor}\label{cor: dualities for PGL_n}
 Let $x\in|X|$, $1\leq r<n$ and $\cE,\cE'\in\Bun_n X$. Then we have:
 \begin{align*}
  &\textup{\textbf{First duality:}}   &&  m_{x,r}(\overline\E, \overline{\E'}) \neq 0\text{ if and only if }m_{x,n-r}(\overline{\E'}, \overline\E) \neq 0;  \\
  &\textup{\textbf{Second duality:}}  &&  m_{x,r}(\overline\E, \overline{\E'}) = m_{x,n-r}(\E^{\vee}(x), \E'^{\vee}) = m_{x,n-r}(\overline\E^{\vee}, \overline\E'^{\vee}); \\
  &\textup{\textbf{Derived duality:}} &&  m_{x,r}(\overline\E, \overline{\E'}) \neq 0\text{ if and only if }m_{x,r}(\overline{\E'}^{\vee}, \overline\E^{\vee}) \neq 0.
 \end{align*}
\end{cor}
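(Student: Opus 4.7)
The plan is to lift each duality from Theorem~\ref{thm-dualities} (which lives on $\Bun_n X$) to its projective counterpart on $\PBun_n X$ via the formula
\[
 m_{x,r}(\overline\E, \overline\F) \ = \ \sum_{\F' \in \overline\F} m_{x,r}(\E, \F'),
\]
recorded in the introduction, where the sum runs over isomorphism classes of bundles $\F'$ in the $\Pic X$-orbit $\overline\F$. Before invoking this, I would briefly verify that the right-hand side does not depend on the choice of representative $\E$: if $\E_1 \cong \E_2 \otimes \cL$, then tensoring by $\cL^{-1}$ gives a bijection between subsheaves of $\E_1$ with cokernel $\cK_x^{\oplus r}$ and subsheaves of $\E_2$ with the same cokernel (since $\cK_x^{\oplus r} \otimes \cL^{-1} \cong \cK_x^{\oplus r}$), which reshuffles the index set $\overline{\E'}$ by a twist without changing the total.

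For the first duality, I would apply Theorem~\ref{thm-dualities} termwise: $m_{x,r}(\E,\F') \neq 0$ iff $m_{x,n-r}(\F'(x),\E) \neq 0$. Since $\F'(x) = \F' \otimes \cO_X(x)$ lies in the same $\Pic X$-orbit as $\F'$, we have $\overline{\F'(x)} = \overline{\F'} = \overline{\E'}$. So the existence of some $\F'\in\overline{\E'}$ with $m_{x,r}(\E,\F')\neq 0$ is equivalent to the existence of some bundle in $\overline{\E'}$ (namely $\F'(x)$) giving a nonzero contribution to $m_{x,n-r}(\overline{\E'}, \overline{\E})$, and vice versa.

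For the second duality, I would apply the $\GL_n$ version $m_{x,r}(\E, \F') = m_{x,n-r}(\E^\vee(x), \F'^\vee)$ inside the sum. The map $\F' \mapsto \F'^\vee$ is a bijection between the $\Pic X$-orbit $\overline{\E'}$ and the orbit $\overline{\E'^\vee} = \overline{\E'}^\vee$ (because $(\F' \otimes \cL)^\vee = \F'^\vee \otimes \cL^{-1}$), so
\[
 m_{x,r}(\overline\E,\overline{\E'}) \ = \ \sum_{\F'\in\overline{\E'}} m_{x,n-r}(\E^\vee(x), \F'^\vee) \ = \ m_{x,n-r}\bigl(\overline{\E^\vee(x)},\, \overline{\E'^\vee}\bigr) \ = \ m_{x,n-r}\bigl(\overline\E^\vee,\, \overline{\E'}^\vee\bigr),
\]
using once more that twisting by $\cO_X(x)$ is invisible in $\PBun_n X$. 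The derived duality is obtained by composing the first and second dualities, exactly as in the proof of Theorem~\ref{thm-dualities}. I expect no serious obstacle: the only thing that needs care is the compatibility of the $\Pic X$-action with duality and with the twist $(-)\otimes\cO_X(x)$, which is immediate.
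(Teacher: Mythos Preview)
Your proposal is correct and follows essentially the same approach as the paper: both use the formula $m_{x,r}(\overline\E,\overline{\E'})=\sum_{\F'\in\overline{\E'}} m_{x,r}(\E,\F')$ together with nonnegativity to lift the first and derived dualities from Theorem~\ref{thm-dualities}, and for the second duality the paper phrases the argument as a bijection of subsheaves (composing $((-)^\vee)^*$ with $\otimes\,\cO_X(-x)$) while you apply Theorem~\ref{thm-dualities} termwise inside the sum and reindex via $\F'\mapsto\F'^\vee$, which amounts to the same thing. Your extra verification that the sum is independent of the chosen representative $\E$ is a nice touch that the paper leaves implicit.
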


\begin{proof}
 Since $m_{x,r}(\overline{\E},\overline{\E'} )=\sum_{\cE''\in\overline{\cE'}} m_{x,r}({\E},{\E''} )$ and $m_{x,r}({\E},{\E'})\geq0$, the first and derived dualities follow at once from Theorem \ref{thm-dualities}. To establish the second duality, note that 
\begin{align*}
\left\{
\begin{array}{c}
\text{ rank $n$ locally free subsheaves } \\
  \text{ of $\E$ with cokernel } \mathcal{K}_{x}^{\oplus r} \\
\end{array}
\right\}
 &\stackrel{\big((-)^{\vee}\big)^{*}}{\longleftrightarrow}
\left\{
\begin{array}{c}
\text{ rank $n$ locally free subsheaves } \\
\text{ of $\E^{\vee}(x)$ with cokernel } \mathcal{K}_{x}^{\oplus n-r}  \\
\end{array} \right\} \\
 &\stackrel{\otimes \mathcal{O}_X(-x)}{\longleftrightarrow}
 \left\{
\begin{array}{c}
\text{ rank $n$ locally free subsheaves } \\
\text{ of $\E^{\vee}$ with cokernel } \mathcal{K}_{x}^{\oplus n-r}
\end{array}
\right\}
\end{align*}
\end{proof}

\begin{rem} Note that $(-)^{*}$ in the Lemma \ref{lem-dual1} is indeed a duality in the sense that
\[ \big( 0 \longrightarrow \E' \stackrel{}{\longrightarrow} \E \longrightarrow \mathcal{K}_{x}^{\oplus r} \longrightarrow 0  \big)^{**} \]
is equals to
\[  0 \longrightarrow \E'(x) \stackrel{}{\longrightarrow} \E(x) \longrightarrow \mathcal{K}_{x}^{\oplus r} \longrightarrow 0. \]
Hence $(-)^{**}$ is isomorphic to $- \otimes_{\mathcal{O}_X} \mathcal{O}_X(x)$ as a functor on short exact sequences of the form $0\to\E'\to\E\to\cK_x^{\oplus r}\to 0$.
\end{rem}

\section{Hecke operators for elliptic curves}\label{sec3}

From this point on, we restrict our attention to elliptic curves. In particular, $X$ will denote an elliptic curve over the finite field $\mathbb{F}_q$ for the rest of the paper. In the following we summarize some well known facts about coherent sheaves on $X$.


\subsection{Coherent sheaves on elliptic curves} \label{subsec-cohsheaves}

 We denote the rank and the degree of a coherent sheaf $\cF$ by $\rk (\F)$ and $\deg \cF$, respectively. We denote the \textit{slope} of a nonzero sheaf $\F$ in $\mathrm{Coh}(X)$ by $\mu(\F) = \deg(\F)\big/  \rk(\F)$. A sheaf $\F$ is \textit{semistable} (resp. \textit{stable}) if for any proper nontrivial subsheaf $\G \subset \F$ we have $\mu(\G) \leq \mu(\F)$ (resp. $ \mu(\G) < \mu(\F)$). The full subcategory $\mathsf{C}_{\mu}$ of $\mathrm{Coh}(X)$ consisting of all semistable sheaves of a fixed slope $\mu \in \Q \cup \{\infty\}$ is abelian, artinian and closed under extension. Moreover, if $\F,\G$ are semistable with $\mu(\F) < \mu(\G)$, then $\mathrm{Hom}(\G,\F)= \Ext(\F,\G)=0.$ Any sheaf $\F$ possesses a unique filtration, called the \emph{Harder-Narasimhan filtration} or \emph{HN-filtration},
$$0 = \F^{r+1} \subset \F^{r} \subset \cdots \subset \F^{1}=\F$$
for which $\F^{i}\big/\F^{i+1}$ is semistable of slope $\mu_i$ and $\mu_1 < \cdots <\mu_r$, (cf.\ \cite{hardernarasimhan}). Observe that $\mathsf{C}_{\infty}$ is the category of torsion sheaves and hence equivalent to the product category $\prod_{x} \mathrm{Tor}_x$ where $x$ runs through the set of closed points of $X$ and $\mathrm{Tor}_x$ denotes the category of torsion sheaves supported at $x.$ Since $\mathrm{Tor}_x$ is equivalent to the category of finite length modules over the local ring $\mathcal{O}_{X,x}$ of the point $x$, the torsion sheaf $\mathcal{K}_x$ is the unique simple sheaf in $\mathrm{Tor}_x$. Moreover, for each $l \in \Z_{>0}$ there exists a unique (up to isomorphism) indecomposable torsion sheaf $\mathcal{K}_{x}^{(l)}$ of length $l$ and  for each partition $\lambda = (l_1, \ldots, l_r)$ we denote by $\mathcal{K}_{x}^{(\lambda)}$  the unique (up to permutation) torsion sheaf $\mathcal{K}_{x}^{(l_1)} \oplus \cdots \oplus \mathcal{K}_{x}^{(l_r)}$  supported at $x$ associated to $\lambda$. 

\begin{thm}[{\cite{atiyah-elliptic}}] The following holds.
\begin{enumerate}[label=\textbf{{\upshape(\roman*)}}]
\item The HN-filtration of any coherent sheaf splits (noncanonically). In particular, any indecomposable coherent sheaf is semistable;

\item The set of stable sheaves of slope $\mu$ is the class of simple objects of $\mathsf{C}_{\mu}.$
\end{enumerate}
\end{thm}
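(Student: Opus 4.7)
The driving fact for part (i) is that on an elliptic curve the canonical sheaf is trivial, $\omega_X \cong \cO_X$, so Serre duality for coherent sheaves on $X$ reads
\[
  \Ext^1(\cF,\cG) \ \cong \ \Hom(\cG,\cF)^*
\]
for all $\cF,\cG \in \Coh(X)$. Combined with the standard slope inequality for semistable sheaves ($\Hom(\cG,\cF)=0$ whenever $\cG,\cF$ are semistable with $\mu(\cG) > \mu(\cF)$, and also whenever $\cG$ is torsion and $\cF$ torsion-free), this gives the key vanishing
\[
  \Ext^1(\cF,\cG) \ = \ 0 \qquad \text{whenever $\cF,\cG$ are semistable with $\mu(\cF) < \mu(\cG)$.}
\]
Here $\mu=\infty$ (torsion) is treated as the top slope, so the vanishing also covers $\cF$ torsion-free, $\cG$ torsion.

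For part (i), I would argue by induction on the length $r$ of the HN-filtration of $\cF$. If $r=1$, $\cF$ is already semistable. For the inductive step, consider the short exact sequence
\[
  0 \ \longrightarrow \ \cF^{2} \ \longrightarrow \ \cF^{1} \ \longrightarrow \ \cF^{1}/\cF^{2} \ \longrightarrow \ 0,
\]
where $\cF^2$ has HN-slopes $\mu_2 < \cdots < \mu_r$, all strictly greater than $\mu(\cF^1/\cF^2)=\mu_1$. By the inductive hypothesis $\cF^2$ splits as a direct sum $\bigoplus_{i=2}^r \cG_i$ of semistable sheaves $\cG_i$ of slope $\mu_i$. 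Then
\[
  \Ext^1(\cF^1/\cF^2,\cF^2) \ = \ \bigoplus_{i=2}^r \Ext^1(\cF^1/\cF^2,\cG_i) \ = \ 0
\]
by the key vanishing, so the extension splits and $\cF \cong (\cF^1/\cF^2) \oplus \cF^2$. Combining with the inductive decomposition of $\cF^2$ gives the desired (noncanonical) splitting. The ``in particular'' clause is immediate: an indecomposable $\cF$ can have only one summand in its HN-decomposition, so $\cF$ itself must be semistable.

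For part (ii), I would argue directly from the definitions. If $\cF$ is stable of slope $\mu$ and $\cG \subset \cF$ is a nonzero subobject inside $\mathsf{C}_\mu$ (so $\mu(\cG)=\mu$), then stability forces $\cG=\cF$, hence $\cF$ is simple in $\mathsf{C}_\mu$. Conversely, if $\cF \in \mathsf{C}_\mu$ is simple but not stable, then it admits a proper nonzero subsheaf $\cG \subset \cF$ with $\mu(\cG)=\mu(\cF)=\mu$. Replacing $\cG$ with the first (maximal destabilizing) piece of its HN-filtration, which has slope $\geq \mu(\cG)=\mu$ and is bounded above by $\mu$ via semistability of $\cF$, we obtain a proper nonzero subobject of $\cF$ in $\mathsf{C}_\mu$, contradicting simplicity.

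\textbf{Main obstacle.} The nontrivial input is the vanishing of $\Ext^1$ across decreasing slope. This rests entirely on Serre duality combined with $\omega_X \cong \cO_X$ — an elliptic-curve-specific ingredient without which the argument collapses. The rest is bookkeeping and a careful induction, with one technical nuisance being to ensure the slope/Hom vanishing is stated in sufficient generality to also handle the interaction between the torsion part (slope $\infty$) and the locally free part of a general coherent sheaf; this is handled by inspecting Hom from torsion to torsion-free, which is zero.
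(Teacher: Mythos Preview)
Your argument is correct. Note, however, that the paper does not supply its own proof of this theorem: it is stated with a citation to Atiyah and used as background. The vanishing $\Ext^1(\cF,\cG)=0$ for semistable $\cF,\cG$ with $\mu(\cF)<\mu(\cG)$ is in fact recorded verbatim in the paragraph preceding the theorem, so your key step is exactly the fact the paper invokes; from there your induction on the HN-length and the splitting of the bottom quotient is the standard modern route to (i). Your treatment of (ii) is also correct and, as you implicitly observe, uses nothing special about elliptic curves---it is the general statement that stable objects are precisely the simple objects in the heart of a slope filtration. There is nothing to correct.
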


Let $\E \in \Bunvar_nX$. We call a subsheaf $\cE'$ of $\cE$ a subbundle if the quotient bundle $\cE/\cE'$ is locally free. Note that every subsheaf $\cE'$ is contained in a unique minimal subbundle $\widehat{\cE'}$ of $\cE$, and this subbundle has the same rank as $\cE'$, though the degree might increase, i.e.\ $\deg(\widehat{\E'}) \geq \deg(\E')$.
For brevity, we call a rank $k$ (sub)bundle also an \emph{$k$-(sub)bundle}. Following \cite{roberto-graphs} we define
$$\delta(\mathcal{E}', \mathcal{E}) := \mathrm{rk}(\E) \deg(\mathcal{E}') - \mathrm{rk}(\E') \deg(\mathcal{E})$$
for a subbundle $\mathcal{E'}$ of $\E$,
$$\delta_k(\mathcal{E}) := \sup_{\substack{\mathcal{E'} \hookrightarrow \mathcal{E} \\ k\text{\tiny-subbundle}} } \delta(\mathcal{E}', \mathcal{E}).$$
for $k=1, \ldots, n-1,$ and
$$\delta(\E) := \max\big\{\delta_1(\E), \ldots, \delta_{n-1}(\E)\big\}.$$
The definition of these $\delta$-invariants is motivated by \cite{serre}.

\begin{rem} A vector bundle $\E$ is semistable if and only if $\delta(\E)\leq 0$, and it is stable if and only if $\delta(\E) < 0$.
\end{rem}


\subsection{Edges in rank \texorpdfstring{$3$}{3} coming from rank \texorpdfstring{$2$}{2}}
Next we present our first results on the graphs $\mathscr{G}_{x,1}^{(3)}$ which we derive from results on $\mathscr{G}_{x,1}^{(2)}$.
Namely, we shall describe the edges $\mathcal{V}_{x,1}(\E)$ of $\mathscr{G}_{x,1}^{(3)}$ for all vertices of the form $\E =\mathcal{M} \oplus \mathcal{L} \in \Bunvar_3 X$ where $\mathcal{M} \in \Bunvar_2 X$ and $\Line \in \Pic X$ with $|\deg \mathcal{M} - 2\deg \Line| \gg 0$, where the precise bound depends on $\cM$ and $\cL$, as explained in the following statement.

\begin{lemma} \label{lem1} Let $x\in X$ be a closed point of degree $|x|$ and $\mathcal{L}\in \Pic X$.  Let $\mathcal{M}, \mathcal{M}^{\prime}\in \Bun_2X$ be such that $m_{x,1}(\mathcal{M},\mathcal{M}^{\prime})\neq 0$.

\begin{enumerate}[label=\textbf{{\upshape(\roman*)}}, ]
 \item \label{lem1-i} If either
\begin{enumerate}
\item $\mathcal{M}$ is indecomposable with $2\deg\mathcal{L}-\deg\mathcal{M}>0$ or

\item $\mathcal{M}=\mathcal{L}_1\oplus\mathcal{L}_2$ is decomposable with $\mathcal{L}_i\in \Pic X$ and $\deg\mathcal{L}>\deg\mathcal{L}_i$ for $i=1,2$,
\end{enumerate}
 then $\Ext(\mathcal{M},\mathcal{L})=0$ and $\Ext(\mathcal{M}^{\prime},\mathcal{L})=0$.
\smallskip

  \item \label{lem1-ii} If either
\begin{enumerate}
\item $\mathcal{M}$ is indecomposable with $2\deg\mathcal{L}-\deg\mathcal{M}>2|x|$ or

\item $\mathcal{M} =\mathcal{L}_1\oplus\mathcal{L}_2$ is decomposable with $\mathcal{L}_i\in \Pic X$ and $\deg\mathcal{L}>\deg\mathcal{L}_i+|x|$ for $i=1,2$,
\end{enumerate}
  then $\Ext(\mathcal{M},\mathcal{L})=0$ and $\Ext(\mathcal{M},\mathcal{L}(-x))=0$.
\smallskip

 \item \label{lem1-iii} If either
\begin{enumerate}
\item $\mathcal{M}$ is indecomposable with $2\deg\mathcal{L}-\deg\mathcal{M}<0$ or 

\item $\mathcal{M}=\mathcal{L}_1\oplus\mathcal{L}_2$ is decomposable with $\mathcal{L}_i\in \Pic X$ and $\deg\mathcal{L}<\deg\mathcal{L}_i$ for $i=1,2$,
\end{enumerate}
then $\Ext(\mathcal{L},\mathcal{M})=0$ and $\Ext(\mathcal{L}(-x),\mathcal{M})=0$.
\smallskip

\item \label{lem1-iv} If either
\begin{enumerate}
\item $\mathcal{M}, \mathcal{M}'$ is indecomposable with $2\deg\mathcal{L}-\deg\mathcal{M}<-|x|$ or  

\item $\mathcal{M}$ is indecomposable with $\mathcal{M}'$  decomposable and $2\deg\mathcal{L}-\deg\mathcal{M}<-2|x|$ or 

\item $\mathcal{M}=\mathcal{L}_1\oplus\mathcal{L}_2$ is decomposable with $\mathcal{L}_i\in \Pic X$  and $\deg\mathcal{L}<\deg\mathcal{L}_i - |x|$ for $i=1,2$,
\end{enumerate}
then $\Ext(\mathcal{L},\mathcal{M})=0$ and $\Ext(\mathcal{L},\mathcal{M}^{\prime})=0$.

\end{enumerate}
\end{lemma}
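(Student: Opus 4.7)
\noindent\emph{Proof plan.} The strategy is to reduce every vanishing to a computation of $H^1$ of a line bundle or of an indecomposable rank~$2$ bundle on the elliptic curve $X$, combining Serre duality with slope inequalities. Since $\omega_X \cong \cO_X$, for locally free $\cF,\cG$ one has $\Ext(\cF,\cG) \cong H^1(X,\cG \otimes \cF^{\vee}) \cong \Hom(\cG,\cF)^\vee$. The three inputs I will use are: (a) a line bundle of positive degree on $X$ has vanishing $H^1$; (b) more generally, by Atiyah's classification, an indecomposable bundle on $X$ of positive degree has vanishing $H^1$; (c) indecomposable rank $2$ bundles on $X$ are semistable, and for semistable $\cF,\cG$ with $\mu(\cF) > \mu(\cG)$ one has $\Hom(\cF,\cG) = 0$.

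The vanishings involving only $\cM$ in parts (i)--(iii) are direct slope computations. For (i), the hypothesis $\mu(\cL) > \mu(\cM)$ in case (a) (with $\cM$ semistable) or $\deg \cL > \deg \cL_i$ for each $i$ in case (b) (after splitting $\Ext(\cM,\cL) = \bigoplus_i \Hom(\cL,\cL_i)^\vee$) yields $\Ext(\cM,\cL)=0$ via Serre duality. The analogous slope inequalities for $\cL(-x)$ and for reversed slopes are exactly what the hypotheses in (ii) and (iii) encode. To pass from $\cM$ to $\cM'$ in part (i), I apply $\Hom(-,\cL)$ to $0 \to \cM' \to \cM \to \cK_x \to 0$; since $\Ext^2 = 0$ on a curve, the map $\Ext(\cM,\cL) \to \Ext(\cM',\cL)$ in the resulting long exact sequence is surjective, so $\Ext(\cM,\cL) = 0$ forces $\Ext(\cM',\cL)=0$.

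Part (iv) requires control of $\cM'$, which need not be semistable. For (iv)(a), where both $\cM$ and $\cM'$ are indecomposable, tensoring by $\cL^{-1}$ preserves indecomposability and $\deg(\cM' \otimes \cL^{-1}) = \deg\cM - |x| - 2\deg\cL > 0$ by hypothesis, so $\Ext(\cL,\cM') = H^1(\cM' \otimes \cL^{-1}) = 0$. For (iv)(b), write $\cM' = \cL'_1 \oplus \cL'_2$; each summand embeds in the semistable $\cM$, hence $\deg \cL'_i \leq \deg\cM/2$, and combined with $\deg \cL'_1 + \deg \cL'_2 = \deg\cM - |x|$ this gives $\deg \cL'_i \geq \deg\cM/2 - |x|$, which strictly exceeds $\deg\cL$ under the assumption $2\deg\cL - \deg\cM < -2|x|$; hence each $H^1(\cL'_i \otimes \cL^{-1}) = 0$.

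The main obstacle will be (iv)(c), where $\cM = \cL_1 \oplus \cL_2$ is decomposable and $\cM'$ is not assumed to be of any specific form. The plan is to split $\cM'$ using the direct sum of $\cM$: set $\tilde\cL_1 := \cM' \cap \cL_1$, which is a sub-line bundle of $\cL_1$ (nonzero, since otherwise $\cM'$ would inject into $\cL_2$ and have rank~$1$), and $\tilde\cL_2 := \cM'/\tilde\cL_1 \hookrightarrow \cM/\cL_1 = \cL_2$, a sub-line bundle of $\cL_2$. The degree identity $\deg\tilde\cL_1 + \deg\tilde\cL_2 = \deg\cM - |x|$ combined with the trivial bounds $\deg\tilde\cL_j \leq \deg\cL_j$ yields $\deg\tilde\cL_i \geq \deg\cL_i - |x|$, which under the hypothesis strictly exceeds $\deg\cL$. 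Consequently both $H^1(\tilde\cL_i \otimes \cL^{-1}) = 0$, and the cohomology long exact sequence of $0 \to \tilde\cL_1 \to \cM' \to \tilde\cL_2 \to 0$ twisted by $\cL^{-1}$ delivers $\Ext(\cL,\cM')=0$. The companion vanishing $\Ext(\cL,\cM)=0$ is immediate from the decomposition of $\cM$ and $\deg\cL_i > \deg\cL$.
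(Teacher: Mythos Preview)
Your proof is correct, and in two places it proceeds differently from the paper.

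For part (i), to show $\Ext(\cM',\cL)=0$ the paper splits into cases according to whether $\cM'$ is indecomposable or decomposable; in the decomposable case it invokes an external bound $\delta_1(\cM') \le |x|$ from \cite[Lemma~8.2]{oliver-graphs} to control the degrees of the summands. Your long exact sequence argument (applying $\Hom(-,\cL)$ to $0\to\cM'\to\cM\to\cK_x\to0$ and using $\Ext^2=0$ on a curve) bypasses this entirely and is both shorter and self-contained.

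For part (iv)(c), the paper again splits on whether $\cM'$ is indecomposable or decomposable, and in the decomposable case asserts that $\cM' = \cL_1'\oplus\cL_2'$ with $\deg\cL_i' \le \deg\cL_i$. Your filtration $0 \to \tilde\cL_1 \to \cM' \to \tilde\cL_2 \to 0$, built from the intersection with $\cL_1 \subset \cM$, handles both cases at once and yields the same degree bounds $\deg\tilde\cL_i \ge \deg\cL_i - |x|$ without assuming anything about the decomposition type of $\cM'$. This is a cleaner route.

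The remaining parts (the vanishings involving only $\cM$, and (iv)(a)--(b)) follow the same slope/Serre-duality reasoning as the paper.
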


\begin{proof}
\begin{enumerate}[label={\textbf{\upshape(\roman*)}}, wide=0pt]
\item We first suppose $\mathcal{M}$ is indecomposable. Then $\mathcal{M}$ is semistable and by hypothesis
$\mu(\mathcal{M})=\deg\mathcal{M}/ 2<\deg\mathcal{L}=\mu(\mathcal{L}),$
which implies $\Ext(\mathcal{M},\mathcal{L})=\Hom(\mathcal{L},\mathcal{M})=0$.

If $\mathcal{M}^{\prime}$ is indecomposable, then $\mathcal{M}'$ is semistable and by hypothesis
$$\mu(\mathcal{M}^{\prime})=(\deg\mathcal{M}-|x|)/ 2<\deg\mathcal{L}=\mu(\mathcal{L}),$$ which implies $\Ext(\mathcal{M}^{\prime},\mathcal{L})=0$.

If $\mathcal{M}^{\prime}$ is decomposable, we write $\mathcal{M}^{\prime}=\mathcal{L}_1^{\prime}\oplus\mathcal{L}_2^{\prime}$ with $\deg\mathcal{L}_2^{\prime}-\deg\mathcal{L}_1^{\prime}=\delta_1(\mathcal{M}^{\prime})\geq 0$ (see \cite[Prop. 7.7]{oliver-graphs}). Since $\deg\mathcal{L}_2^{\prime}+\deg\mathcal{L}_1^{\prime}=\deg\mathcal{M}-|x|$ it follows that
\[ \deg\mathcal{L}_2^{\prime}=\frac{\deg\mathcal{M}-|x|+\delta_1(\mathcal{M}^{\prime})}{2} \]
From \cite[Lemma 8.2]{oliver-graphs} and $\mathcal{M}$ semi-stability (i.e.\ $\delta_1(\mathcal{M})\leq 0$), it follows that $\delta_1(\mathcal{M}^{\prime})\leq |x|$.
Therefore,
      $$\deg\mathcal{L}_1^{\prime}\leq\deg\mathcal{L}_2^{\prime}=\frac{\deg\mathcal{M}-|x|+\delta_1(\mathcal{M}^{\prime})}{2}<\deg\mathcal{L},$$
      which implies that $\Ext(\mathcal{M}^{\prime},\mathcal{L})=H^0(X,\mathcal{M}^{\prime}\otimes\mathcal{L}^{\vee})=0$.

If $\mathcal{M}$ is decomposable, then $\Ext(\mathcal{M},\mathcal{L})=\Hom(\mathcal{L},\mathcal{M})=H^0(X,\mathcal{M}\otimes\mathcal{L}^{\vee})=0$.
 If $\mathcal{M}^{\prime}$  is  indecomposable we proceed as above. If $\mathcal{M}^{\prime}$  is decomposable and $\mathcal{M}$ is decomposable, we can write $\mathcal{M}^{\prime}=\mathcal{L}_1^{\prime}\oplus\mathcal{L}_2^{\prime}$ with $\deg\mathcal{L}_i^{\prime}\leq\deg\mathcal{L}_i$, $i=1,2$. Therefore, $\Ext(\mathcal{M}^{\prime},\mathcal{L})=H^0(X,\mathcal{M}^{\prime}\otimes\mathcal{L}^{\vee})=0$.\\

\item If $\mathcal{M}$ is indecomposable we have
$$\mu(\mathcal{M})=\deg\mathcal{M}/ 2<\deg\mathcal{L}-|x|=\mu(\mathcal{L}(-x))<\mu(\mathcal{L}),$$
which implies $\Ext(\mathcal{M},\mathcal{L})=0$ and $\Ext(\mathcal{M},\mathcal{L}(-x))=0$.

If $\mathcal{M}$ is decomposable, by Serre duality $\Ext(\mathcal{M},\mathcal{L})=H^0(X,\mathcal{M}\otimes\mathcal{L}^{\vee})=0$. Similarly $\Ext(\mathcal{M},\mathcal{L}(-x))=0$.\\

\item If $\mathcal{M}$ is indecomposable,
$$\deg\mathcal{L}-|x|=\mu(\mathcal{L}(-x))<\mu(\mathcal{L})<\mu(\mathcal{M})=\deg\mathcal{M}/2,$$
which implies $\Ext(\mathcal{L},\mathcal{M})=\Ext(\mathcal{L}(-x),\mathcal{M})=0$.

   If $\mathcal{M}$ is decomposable, by Serre duality we obtain $\Ext(\mathcal{L},\mathcal{M})=H^0(X,\mathcal{M}^{\vee}\otimes\mathcal{L})=0$. Similarly $\Ext(\mathcal{L}(-x),\mathcal{M})=0.$ \\

\item If $\mathcal{M}$ and $\mathcal{M}'$ are indecomposable, then they are semistable. By hypothesis
\[ \mu(\mathcal{L}) < (\deg \mathcal{M} - |x|)/2  < \mu(\mathcal{M}) \quad \text{ and } \quad
\mu(\mathcal{L}) < (\deg \mathcal{M} - |x|)/2 = \mu(\mathcal{M}'), \]
 which implies $\Ext(\mathcal{L},\mathcal{M})=\Ext(\mathcal{L},\mathcal{M}')=0$.

If $\mathcal{M}$ is indecomposable and $\mathcal{M}^{\prime}$ is decomposable, we write $\mathcal{M}^{\prime}=\mathcal{L}_1^{\prime}\oplus\mathcal{L}_2^{\prime}$ with $\deg\mathcal{L}_2^{\prime}-\deg\mathcal{L}_1^{\prime}=\delta_1(\mathcal{M}^{\prime})\geq 0$.  Moreover $\deg\mathcal{L}_2^{\prime}+\deg\mathcal{L}_1^{\prime}=\deg\mathcal{M}-|x|$ and $\delta_1(\mathcal{M}^{\prime})\leq |x|$ since $\delta_1(\mathcal{M})\leq 0$.Thus
$$\deg\mathcal{L}_2^{\prime}\geq\deg\mathcal{L}_1^{\prime}=\frac{\deg\mathcal{M}-|x|-\delta_1(\mathcal{M}^{\prime})}{2}>\deg\mathcal{L},$$
which implies $\Ext(\mathcal{L},\mathcal{M}^{\prime})=0$. That $\Ext(\mathcal{L},\mathcal{M})=0$ follows from $\mu(\mathcal{L}) <  \mu(\mathcal{M})$ and the semi-stability of $\mathcal{M}$.

If $\mathcal{M}$ is decomposable, then $\Ext(\mathcal{L},\mathcal{M})=\Hom(\mathcal{M},\mathcal{L})=H^0(X,\mathcal{L}\otimes\mathcal{M}^{\vee})=0$.
Moreover, if $\mathcal{M}^{\prime}$ is indecomposable,
$$\mu(\mathcal{L}) = \deg \Line <  (\deg\mathcal{L}_1 + \deg\mathcal{L}_2 -2 |x|)/2 < \mu(\mathcal{M}^{\prime}),$$
which implies $\Ext(\mathcal{L},\mathcal{M}^{\prime})=0$.
Otherwise, if $\mathcal{M}^{\prime}$ is decomposable, we can write $\mathcal{M}^{\prime}=\mathcal{L}_1^{\prime}\oplus\mathcal{L}_2^{\prime}$ with $\deg\mathcal{L}_i^{\prime}\leq\deg\mathcal{L}_i$, $i=1,2$. Since $\deg \mathcal{M} = \deg \Line_1 + \deg \Line_2 - |x|$, therefore $\deg\mathcal{L}_i^{\prime}\geq\deg\mathcal{L}_i-  |x|>\deg\mathcal{L}$, which implies $\Ext(\mathcal{L},\mathcal{M}^{\prime})=H^0(X,\mathcal{M}^{\prime\vee}\otimes\mathcal{L})=0$. \qedhere
\end{enumerate}
\end{proof}

\begin{thm} \label{thm1} Let $x\in X$ be a closed point of degree $|x|$ and  $q_x := q^{|x|}.$ Let $\mathcal{L}\in \Pic X$  and $\mathcal{M}, \mathcal{M}^{\prime}\in \Bun_2X$ be such that $m_{x,1}(\mathcal{M},\mathcal{M}^{\prime}) =m \neq 0$.
In  $\rm{\emph{\textbf{(i)}}} -  \rm{\emph{\textbf{(iv)}}}$ below we consider the corresponding hypotheses of Lemma \ref{lem1}. Then

\begin{enumerate}[label={\textbf{\upshape(\roman*)}}]
  \item \label{thm1-i} $m_{x,1}(\mathcal{M}\oplus\mathcal{L},\mathcal{M}^{\prime}\oplus\mathcal{L})\geq m$; \smallskip
  \item \label{thm1-ii} $m_{x,1}(\mathcal{M}\oplus\mathcal{L},\mathcal{M}\oplus\mathcal{L}(-x)) = q_x^2$ and equality holds in part \ref{thm1-i};
  \smallskip
   \item \label{thm1-iii}$m_{x,1}(\mathcal{M}\oplus\mathcal{L},\mathcal{M}\oplus\mathcal{L}(-x))\geq 1$;
   \smallskip
  \item  \label{thm1-iv} $m_{x,1}(\mathcal{M}\oplus\mathcal{L},\mathcal{M}^{\prime}\oplus\mathcal{L}) \geq m\cdot q_x$.
  \end{enumerate}
Moreover, equality holds in \ref{thm1-iv} if $\mathcal{M}$ and $\Line$ satisfy Lemma \ref{lem1} \ref{lem1-iv} for every $\mathcal{M}'$ such that
$m_{x,1}(\mathcal{M}, \cM') \neq 0.$  In this case, we also have an equality in part \ref{thm1-iii}. 

\begin{proof}
Let $\mathcal{K}_x$ be the skyscraper sheaf at $x$. If $\mathcal{M}_1=\mathcal{M},\ldots,\mathcal{M}_r\in \mathrm{Coh}(X)$ are the extensions of $\mathcal{M}^{\prime}$ by $\mathcal{K}_x$ and $m_i :=h_{\mathcal{K}_x,\mathcal{M}^{\prime}}^{\mathcal{M}_i}$ for $i=1,\ldots r$. Then $m=m_1$,
$$\mathcal{K}_x\ast \mathcal{M}^{\prime}=v^{2|x|} \big(m_1\mathcal{M}_1+\cdots+m_r\mathcal{M}_r\big), \quad
\mathcal{K}_x\ast\mathcal{L}=v^{|x|} \big(\mathcal{L}(x)+\mathcal{L}\oplus\mathcal{K}_x\big)$$
and
$$ \mathcal{K}_x\ast \mathcal{L}(-x)=v^{|x|}\big(\mathcal{L}+\mathcal{L}(-x)\oplus\mathcal{K}_x\big).$$
We denote by $\pi^{\mathrm{vec}}(-)$ the vector bundle part in the Hall algebra products. For $\mathcal{E}\in \Bun_nX$, we have
$\pi^{\mathrm{vec}}(\mathcal{K}_x *\mathcal{E})=[\mathcal{K}_x,\mathcal{E}]$
, where the commutator is taken in the Hall algebra $\mathsf{H}_X$ (cf. \cite[Proposition 2.9]{dragos}).

\begin{enumerate}[label={\textbf{\upshape(\roman*)}}, wide=0pt]
  \item Since $\Ext(\mathcal{M}^{\prime},\mathcal{L})=0$, we have in $\mathsf{H}_X$ that
  $$\mathcal{M}^{\prime}\oplus\mathcal{L}=v^{2\deg\mathcal{L}-\deg\mathcal{M}+|x|}\mathcal{M}^{\prime}\ast\mathcal{L}.$$
  Hence 
 \begin{align*}
  \mathcal{K}_x\ast (\mathcal{M}^{\prime}\oplus\mathcal{L})      &=  v^{2\deg\mathcal{L}-\deg\mathcal{M}+|x|} \big(\mathcal{K}_x\ast\mathcal{M}^{\prime}\big) \ast\mathcal{L} \\
      &= v^{2\deg\mathcal{L}-\deg\mathcal{M}+|x|}v^{2|x|} \big( m_1\mathcal{M}_1+\cdots+m_r\mathcal{M}_r \big)\ast\mathcal{L}\\
      &= m v^{2\deg\mathcal{L}-\deg\mathcal{M}+3|x|}\mathcal{M}\ast\mathcal{L}+ v^{2\deg\mathcal{L}-\deg\mathcal{M}+3|x|}\Big( \sum_{i=2}^{r} m_i\mathcal{M}_i \Big)\ast \mathcal{L} \\
      &= m v^{2\deg\mathcal{L}-\deg\mathcal{M}+3|x|}v^{-(2\deg\mathcal{L}-\deg\mathcal{M})}\mathcal{M}\oplus\mathcal{L} \\
      &\quad + v^{2\deg\mathcal{L}-\deg\mathcal{M}+3|x|} \Big( \sum_{i=2}^{r} m_i\mathcal{M}_i \Big)\ast \mathcal{L} \\
      &= mv^{3|x|}\mathcal{M}\oplus\mathcal{L} + v^{2\deg\mathcal{L}-\deg\mathcal{M}+3|x|}\big(m_2\mathcal{M}_2+\cdots+m_r\mathcal{M}_r\big)\ast \mathcal{L}
       \end{align*}
Therefore $m_{x,1}\big(\mathcal{M}\oplus\mathcal{L},\mathcal{M}^{\prime}\oplus\mathcal{L} \big)\geq v^{-3|x|}mv^{3|x|}=m.$\\

  \item  Since $\Ext(\mathcal{M},\mathcal{L}(-x))=0$, thus in $\mathsf{H}_X$
  $$\mathcal{M}\oplus\mathcal{L}(-x)=v^{2(\deg\mathcal{L}-|x|)-\deg\mathcal{M}}\mathcal{M}\ast \mathcal{L}(-x).$$
Hence
\begin{align*}
 \mathcal{K}_x\ast (\mathcal{M}\oplus\mathcal{L}(-x))     &=v^{2(\deg\mathcal{L}-|x|)-\deg\mathcal{M}} \; \mathcal{K}_x\ast\mathcal{M}\ast \mathcal{L}(-x)\\
      &=v^{2(\deg\mathcal{L}-|x|)-\deg\mathcal{M}} \big(\mathcal{M}\ast \mathcal{K}_x+[\mathcal{K}_x,\mathcal{M}]\big)\ast\mathcal{L}(-x)\\
      &=v^{2(\deg\mathcal{L}-|x|)-\deg\mathcal{M}} \big(\mathcal{M}\ast \mathcal{K}_x\ast \mathcal{L}(-x)+[\mathcal{K}_x,\mathcal{M}]\ast \mathcal{L}(-x) \big)\\
      &=v^{2(\deg\mathcal{L}-|x|)-\deg\mathcal{M}} \Big(\mathcal{M}\ast  v^{|x|}\mathcal{L}+\mathcal{M}\ast v^{|x|}\big(\mathcal{K}_x\oplus\mathcal{L}(-x)\big) \\
      &\quad +\pi^{\mathrm{vec}}(\mathcal{K}_x\ast \mathcal{M})\ast \mathcal{L}(-x) \Big)\\
      &=v^{-|x|}\mathcal{M}\oplus\mathcal{L}+v^{2(\deg\mathcal{L}-|x|)-\deg\mathcal{M}} \Big(\mathcal{M}\ast v^{|x|} \big(\mathcal{K}_x\oplus\mathcal{L}(-x) \big) \\
      &\quad +\pi^{\mathrm{vec}}(\mathcal{K}_x\ast\mathcal{M})\ast \mathcal{L}(-x)\Big)
      \end{align*}
Therefore, $ m_{x,1} \big(\mathcal{M}\oplus\mathcal{L},\mathcal{M}\oplus\mathcal{L}(-x)\big)\geq v^{-3|x|}v^{-|x|}=q^{2|x|}=q_x^2.$ Moreover, if $\mathcal{M}$ and $\Line$ are such that satisfies Lemma \ref{lem1}\ref{lem1-ii}, then it satisfies Lemma \ref{lem1}\ref{lem1-i} and hence \ref{thm1-i} holds. Summing up over all multiplicities
$$ \sum_{\E' \in \Bunvar_3 X} m_{x,1}\big(\mathcal{M}\oplus\mathcal{L}, \E' \big) \geq q_x^2 + \sum_{\mathcal{M}' \in \Bunvar_2 X} m_{x,1}\big(\mathcal{M}, \mathcal{M}' \big) = q_x^2 + q_x +1$$
 where $\E'$ runs over the neighbors of $\mathcal{M}\oplus\mathcal{L}$ in $\mathscr{G}_{x,1}^{(3)}$ obtained on \ref{thm1-i} and \ref{thm1-ii}. The identities follow from Theorem \ref{thm-multi}. \\

  \item  Since $\Ext(\mathcal{L}(-x),\mathcal{M})=0$, we have in $\mathsf{H}_X$ that
  $$\mathcal{L}(-x)\oplus\mathcal{M}=v^{\deg\mathcal{M}-2(\deg\mathcal{L}-|x|)}\mathcal{L}(-x)\ast \mathcal{M}.$$
  Hence,
  \begin{align*}
      \mathcal{K}_x\ast \big(\mathcal{L}(-x)\oplus\mathcal{M}\big) &=v^{\deg\mathcal{M}-2(\deg\mathcal{L}-|x|)}\mathcal{K}_x\ast \mathcal{L}(-x)\ast \mathcal{M}\\
      &=v^{\deg\mathcal{M}-2(\deg\mathcal{L}-|x|)}v^{|x|}v^{-(\deg\mathcal{M}-2\deg\mathcal{L})}\mathcal{L}\oplus\mathcal{M} \\
      &\quad + v^{\deg\mathcal{M}-2(\deg\mathcal{L}-|x|)}v^{|x|} \big(\mathcal{K}_x\oplus\mathcal{L}(-x) \big)\ast \mathcal{M}.
  \end{align*}
Therefore we conclude that $m_{x,1} \big(\mathcal{L}\oplus\mathcal{M},\mathcal{L}(-x)\oplus\mathcal{M}\big)\geq v^{-3|x|}v^{3|x|}=1.$      \\

  \item  Since $\Ext(\mathcal{L},\mathcal{M}^{\prime})=0$, we have $\mathcal{M}^{\prime}\oplus\mathcal{L}=v^{\deg\mathcal{M}^{\prime}-2\deg\mathcal{L}}\mathcal{L}\ast \mathcal{M}^{\prime}$. Hence,
\begin{align*}
    \mathcal{K}_x\ast \big(\mathcal{M}^{\prime}\oplus\mathcal{L} \big) & =v^{\deg\mathcal{M}^{\prime}-2\deg\mathcal{L}} \mathcal{K}_x\ast \mathcal{L}\ast\mathcal{M}^{\prime}\\
      &=v^{\deg\mathcal{M}-|x|-2\deg\mathcal{L}} \big(\mathcal{L}\ast\mathcal{K}_x\ast \mathcal{M}^{\prime}+[\mathcal{K}_x, \mathcal{L}]\ast\mathcal{M}^{\prime}\big)\\
      &=v^{\deg\mathcal{M}-|x|-2\deg\mathcal{L}} \Big(\mathcal{L}\ast v^{2|x|}\Big(\sum_{i=1}^{r} m_i\mathcal{M}_i \Big)+\pi^{\mathrm{vec}}(\mathcal{K}_x\ast\mathcal{L})\ast\mathcal{M}^{\prime}\Big)\\
      &=m v^{|x|}\mathcal{L}\oplus\mathcal{M}+v^{\deg\mathcal{M}+|x|-2\deg\mathcal{L}} \mathcal{L}\ast \Big(\sum_{i=2}^{r} m_i\mathcal{M}_i \Big) \\
      &\quad  +v^{\deg\mathcal{M}-|x|-2\deg\mathcal{L}} \pi^{\mathrm{vec}}(\mathcal{K}_x\ast\mathcal{L})\ast\mathcal{M}^{\prime}.
\end{align*}
Therefore, $m_{x,1}\big(\mathcal{M}\oplus\mathcal{L},\mathcal{M}^{\prime}\oplus\mathcal{L} \big)\geq v^{-3|x|}v^{|x|}m=m q_x.$ Moreover, if $\mathcal{M}$ and $\Line$ are such that satisfies Lemma \ref{lem1}\ref{lem1-iv}, then it satisfies Lemma \ref{lem1}\ref{lem1-iii} and hence \ref{thm1-iii} above holds. Summing up over all multiplicities
$$ \sum_{\E' \in \Bunvar_3 X} m_{x,1} \big(\mathcal{M}\oplus\mathcal{L}, \E' \big) \geq \sum_{\mathcal{M}' \in \Bunvar_2 X} m_{x,1} \big(\mathcal{M}, \mathcal{M}' \big) q_x +  1 = q_x^2 + q_x +1$$
 where $\E'$ runs over the neighbors of $\mathcal{M}\oplus\mathcal{L}$ in $\mathscr{G}_{x,1}^{(3)}$ obtained on \ref{thm1-iii} and \ref{thm1-iv}. The identities follow from Theorem \ref{thm-multi}.\qedhere
 \end{enumerate}
\end{proof}
 \end{thm}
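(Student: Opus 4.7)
The plan is to work entirely inside the Hall algebra $\mathsf{H}_X$, using the identification $m_{x,1}(\mathcal{E},\mathcal{E}') = h_{\mathcal{K}_x,\mathcal{E}'}^{\mathcal{E}}$ so that each target weight appears as the coefficient of $\mathcal{M}\oplus\mathcal{L}$ in the expansion of $\mathcal{K}_x\ast\mathcal{E}'$. The key mechanism is the standard conversion: whenever $\Ext(\mathcal{A},\mathcal{B})=0$, the only extension is the split one, so $\mathcal{A}\ast\mathcal{B} = v^{-\langle\mathcal{A},\mathcal{B}\rangle}(\mathcal{A}\oplus\mathcal{B})$ in $\mathsf{H}_X$. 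Lemma~\ref{lem1} supplies exactly the vanishings needed first to rewrite each $\mathcal{E}'$ in the statement as a scalar-times-product of simpler factors, and then to repackage the surviving $\mathcal{M}$--$\mathcal{L}$ product back into the direct sum $\mathcal{M}\oplus\mathcal{L}$ after manipulation.

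Parts \ref{thm1-i} and \ref{thm1-iii} follow this pattern directly. For \ref{thm1-i}, Lemma~\ref{lem1}\ref{lem1-i} lets me write $\mathcal{M}'\oplus\mathcal{L}$ as a scalar multiple of $\mathcal{M}'\ast\mathcal{L}$; expanding $\mathcal{K}_x\ast\mathcal{M}' = v^{2|x|}\sum_i m_i\mathcal{M}_i$ with $\mathcal{M}_1 = \mathcal{M}$, $m_1 = m$, and repackaging the $\mathcal{M}\ast\mathcal{L}$ summand via the same $\Ext$-vanishing, produces the coefficient $m$. Part \ref{thm1-iii} is symmetric in $\mathcal{L}$ and $\mathcal{M}$: use Lemma~\ref{lem1}\ref{lem1-iii} to write $\mathcal{L}(-x)\oplus\mathcal{M}$ as a scalar multiple of $\mathcal{L}(-x)\ast\mathcal{M}$, apply the standard formula $\mathcal{K}_x\ast\mathcal{L}(-x) = v^{|x|}(\mathcal{L}+\mathcal{K}_x\oplus\mathcal{L}(-x))$, and extract the coefficient $1$ of $\mathcal{L}\oplus\mathcal{M}$. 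Parts \ref{thm1-ii} and \ref{thm1-iv} additionally invoke the identity $[\mathcal{K}_x,\mathcal{E}] = \pi^{\mathrm{vec}}(\mathcal{K}_x\ast\mathcal{E})$ of \cite[Prop.~2.9]{dragos}: for \ref{thm1-ii} I would rewrite $\mathcal{M}\oplus\mathcal{L}(-x)$ as a scalar times $\mathcal{M}\ast\mathcal{L}(-x)$, split $\mathcal{K}_x\ast\mathcal{M} = \mathcal{M}\ast\mathcal{K}_x + [\mathcal{K}_x,\mathcal{M}]$, and observe that the commutator term produces nontrivial extensions of $\mathcal{M}$ and therefore cannot match the split bundle $\mathcal{M}\oplus\mathcal{L}$. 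The $\mathcal{M}\ast\mathcal{K}_x\ast\mathcal{L}(-x)$ factor then expands, and its $\mathcal{M}\ast\mathcal{L}$ summand repackages to give the coefficient $q_x^2$. Part \ref{thm1-iv} is the mirror companion: split $\mathcal{K}_x\ast\mathcal{L}$ instead, and pick up the factor $m$ from the expansion of $\mathcal{K}_x\ast\mathcal{M}'$.

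For the equality statements in \ref{thm1-ii} and \ref{thm1-iv}, I would appeal to Theorem~\ref{thm-multi}: the total outgoing weight at $\mathcal{M}\oplus\mathcal{L}$ in $\mathscr{G}_{x,1}^{(3)}$ equals $q_x^2 + q_x + 1 = \#\mathrm{Gr}(2,3)(\Fq(x))$. Under the stronger hypothesis~\ref{lem1-ii} (resp.~\ref{lem1-iv}), the weaker hypothesis~\ref{lem1-i} (resp.~\ref{lem1-iii}) holds for every rank~$2$ neighbor $\mathcal{M}'$ of $\mathcal{M}$, so part~\ref{thm1-i} (resp.~\ref{thm1-iii}) applies and contributes an additional $\sum_{\mathcal{M}'}m_{x,1}(\mathcal{M},\mathcal{M}') = q_x + 1$ (resp.\ $q_x(q_x+1)$) to the outgoing total, by the rank~$2$ version of Theorem~\ref{thm-multi}. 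Combined with $q_x^2$ (resp.\ $1$) from the direct computation, this already saturates $q_x^2 + q_x + 1$, forcing every intermediate inequality to be an equality. The main obstacle throughout is the careful bookkeeping of the various $v$-power prefactors and of the residual summands $\mathcal{M}_i\ast\mathcal{L}$ for $i\geq 2$, which must be shown not to contain further copies of $\mathcal{M}\oplus\mathcal{L}$; fortunately, the equality argument via Theorem~\ref{thm-multi} provides a transparent cross-check that catches any miscounting.
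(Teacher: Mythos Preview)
Your proposal is correct and follows essentially the same route as the paper's proof: rewrite each $\mathcal{E}'$ as a scalar-times-product using the $\Ext$-vanishings from Lemma~\ref{lem1}, expand $\mathcal{K}_x\ast\mathcal{E}'$ in $\mathsf{H}_X$ (using the commutator identity $[\mathcal{K}_x,\mathcal{E}]=\pi^{\mathrm{vec}}(\mathcal{K}_x\ast\mathcal{E})$ where needed), read off the coefficient of $\mathcal{M}\oplus\mathcal{L}$ to get the lower bounds, and then secure the equalities by saturating the Grassmannian count from Theorem~\ref{thm-multi}. One small caveat: your aside in part~\ref{thm1-ii} that the commutator term ``cannot match the split bundle $\mathcal{M}\oplus\mathcal{L}$'' is not argued in the paper and is not needed---the paper only extracts $\geq q_x^2$ from the direct computation and relies entirely on the counting argument for equality, exactly as you outline in your final paragraph.
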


\begin{cor} \label{cor-thm1} Let $\mathcal{M} \in \Bunvar_2 X$ and $\Line \in \Pic X$.  We denote by $\E$  the rank $3$ vector bundle $\mathcal{M}  \oplus \Line.$
\begin{enumerate}[label={\textbf{\upshape(\roman*)}}]
\item \label{cor-thm1-i}  Suppose that $\mathcal{M}$ and $\Line$ verify the hypothesis in Lemma \ref{lem1} \ref{lem1-ii}. Then
\[ \mathcal{V}_{x,1}(\E) = \Big\{ \big(\E, \mathcal{M}  \oplus \Line(-x), q_x^2 \big),  \big(\E, \mathcal{M}'  \oplus \Line, m\big) \;\big| \;
\big( \mathcal{M},\mathcal{M}', m \big) \in \mathcal{V}_{x,1}(\mathcal{M})  \Big\}. \]

\item \label{cor-thm1-ii}  Suppose that $\mathcal{M}$ and $\Line$ verify the hypothesis in Lemma \ref{lem1} \ref{lem1-iv} for every $\mathcal{M}' \in \Bunvar_2 X$ such that
$\big( \mathcal{M}, \mathcal{M}', m\big) \in \mathcal{V}_{x,1}(\mathcal{M})$. Then
\[ \mathcal{V}_{x,1}(\E) = \Big\{ \big(\E, \mathcal{M}  \oplus \Line(-x), 1 \big),  \big(\E, \mathcal{M}'  \oplus \Line, m q_x \big) \;\big| \;
\big( \mathcal{M},\mathcal{M}', m \big) \in \mathcal{V}_{x,1}(\mathcal{M})  \Big\}. \]
\end{enumerate}

\begin{proof} It follows immediately from the proofs of Theorem \ref{thm1} \ref{thm1-ii} and \ref{thm1-iv} and from the fact
 \[ \sum_{\E' \in \Bunvar_3 X} m_{x,1}(\E, \E' ) = q_x^2 +q_x +1, \]
 that is proved in Theorem \ref{thm-multi}.
\end{proof}
\end{cor}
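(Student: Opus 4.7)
The plan is to upgrade the lower bounds from Theorem \ref{thm1} to equalities by invoking conservation of outgoing weight, which is the content of Theorem \ref{thm-multi}. That theorem, applied to $\mathscr{G}_{x,1}^{(3)}$, forces the sum of edge multiplicities leaving any vertex to equal $\#\mathrm{Gr}(2,3)(\Fq(x)) = q_x^2 + q_x + 1$; applied to $\mathscr{G}_{x,1}^{(2)}$ at the vertex $\mathcal{M}$, it gives $\#\mathrm{Gr}(1,2)(\Fq(x)) = q_x + 1$. The point of the proof is that the lower bounds supplied by Theorem \ref{thm1} already saturate the rank-$3$ total in each of the two regimes, so every remaining edge must have weight zero.

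For part \ref{cor-thm1-i}, the hypothesis of Lemma \ref{lem1}\ref{lem1-ii} is in force. Theorem \ref{thm1}\ref{thm1-ii} already asserts the equalities $m_{x,1}(\E,\mathcal{M}\oplus\mathcal{L}(-x))=q_x^2$ and, by the stipulation that equality holds in part \ref{thm1-i}, $m_{x,1}(\E,\mathcal{M}'\oplus\mathcal{L})=m$ for every $(\mathcal{M},\mathcal{M}',m)\in \mathcal{V}_{x,1}(\mathcal{M})$. Adding the contributions and using the rank-$2$ total at $\mathcal{M}$ yields
\[
 q_x^2 \ + \ \sum_{(\mathcal{M},\mathcal{M}',m)\in\mathcal{V}_{x,1}(\mathcal{M})} m \ = \ q_x^2 + (q_x+1) \ = \ q_x^2+q_x+1,
\]
which is the full outgoing weight at $\E$ in $\mathscr{G}_{x,1}^{(3)}$. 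There can therefore be no further neighbors, and $\mathcal{V}_{x,1}(\E)$ is precisely the set described.

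For part \ref{cor-thm1-ii}, the hypothesis of Lemma \ref{lem1}\ref{lem1-iv} holds for every $\mathcal{M}'$ adjacent to $\mathcal{M}$, so the final clause of Theorem \ref{thm1} gives equality in both \ref{thm1-iii} and \ref{thm1-iv}: $m_{x,1}(\E,\mathcal{M}\oplus\mathcal{L}(-x))=1$ and $m_{x,1}(\E,\mathcal{M}'\oplus\mathcal{L})=m\cdot q_x$ for each neighbor. Summing gives $1 + q_x(q_x+1) = q_x^2+q_x+1$, once more matching the rank-$3$ total, which closes off the neighborhood.

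I do not expect a genuine obstacle beyond the bookkeeping above; the only mild point to record is that the listed triples are pairwise distinct edges, so the set description is unambiguous. This follows from Krull--Schmidt uniqueness of indecomposable decompositions together with the quantitative degree gaps in the hypotheses of Lemma \ref{lem1}\ref{lem1-ii} and \ref{lem1-iv}, which preclude any accidental isomorphism $\mathcal{M}\oplus\mathcal{L}(-x)\simeq\mathcal{M}'\oplus\mathcal{L}$.
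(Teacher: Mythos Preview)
Your proof is correct and follows the same approach as the paper: you invoke the equalities already established in Theorem \ref{thm1}\ref{thm1-ii} and the final clause of Theorem \ref{thm1} (for part \ref{cor-thm1-ii}), then use the total-weight count from Theorem \ref{thm-multi} to rule out any further neighbors. Your added remark on distinctness of the listed targets via Krull--Schmidt is a welcome clarification that the paper leaves implicit in the summation step of the proof of Theorem \ref{thm1}.
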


\begin{cor}   Let $\mathcal{M} \in \Bunvar_2 X$, $\Line \in \Pic X$ and set $\E$ the rank $3$ vector bundle  $\mathcal{M}  \oplus \Line.$

\begin{enumerate}[label={\textbf{\upshape(\roman*)}}]
\item Suppose that either $\mathcal{M}$ is indecomposable and $2 \deg \Line - \deg \mathcal{M} > 2|x|$ or  $\mathcal{M} = \Line_1 \oplus \Line_2$ with
$\deg \Line_1 \leq \deg \Line_2 \leq \deg \Line$ and $\deg \Line - \deg \Line_2 > |x|$. Then
\[ \mathcal{V}_{x,2}(\E) = \Big\{ \big(\E, \mathcal{M}(-x)  \oplus \Line, 1 \big),  \big(\E, (\mathcal{M}'^{\vee}  \oplus \Line)(-x), m q_x\big) \;\big| \;
\big( \mathcal{M}^{\vee},\mathcal{M}', m \big) \in \mathcal{V}_{x,1}(\mathcal{M}^{\vee})  \Big\}. \]

\item  Suppose that either $\mathcal{M}$ is indecomposable and $2 \deg \Line - \deg \mathcal{M} < -2|x|$ or  $\mathcal{M} = \Line_1 \oplus \Line_2$ with
$\deg \Line \leq \deg \Line_1 \leq \deg \Line_2 $ and $\deg \Line_1 - \deg \Line > |x|$ . Then
\[ \mathcal{V}_{x,2}(\E) = \Big\{ \big(\E, \mathcal{M}(-x)  \oplus \Line, q_x^2 \big),  \big(\E, (\mathcal{M}'^{\vee}  \oplus \Line)(-x), m \big) \;\big| \;
\big( \mathcal{M}^{\vee},\mathcal{M}', m \big) \in \mathcal{V}_{x,1}(\mathcal{M}^{\vee})    \Big\}. \]
\end{enumerate}

\begin{proof} If $\mathcal{M}, \Line$ satisfy the hypothesis of $\rm{\bf{(i)}}$ (resp. $\rm{\bf{(ii)}}$), thus  $\E^{\vee}$ satisfies the conditions of Lemma \ref{lem1} \ref{lem1-iv} (resp. \ref{lem1-ii}). Hence item
$\rm{\bf{(i)}}$ (resp. $\rm{\bf{(ii)}}$) follows from Corollary \ref{cor-thm1} and Theorem \ref{thm-dualities}.
\end{proof}

\end{cor}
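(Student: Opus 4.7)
The strategy is to reduce to Corollary \ref{cor-thm1} by applying the second duality from Theorem \ref{thm-dualities}. Since $n=3$ and $r=2$, that duality reads
\[
 m_{x,2}(\E,\E') \ = \ m_{x,1}\bigl(\E^{\vee},\,\E'^{\vee}(-x)\bigr),
\]
so the neighbors of $\E$ in $\mathscr{G}_{x,2}^{(3)}$ are obtained from those of $\E^{\vee}=\mathcal{M}^{\vee}\oplus\Line^{\vee}$ in $\mathscr{G}_{x,1}^{(3)}$, up to a twist by $\cO_X(x)$ and a final dualization.

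First I would verify that the numerical hypotheses transform correctly under $(-)^{\vee}$. In case \textbf{(i)}, the indecomposable subcase becomes $2\deg\Line^{\vee}-\deg\mathcal{M}^{\vee}<-2|x|$, which is exactly Lemma \ref{lem1}\ref{lem1-iv}(a) applied to $\mathcal{M}^{\vee},\Line^{\vee}$. In the decomposable subcase, writing $\mathcal{M}^{\vee}=\Line_1^{\vee}\oplus\Line_2^{\vee}$ with $\deg\Line_2^{\vee}\leq\deg\Line_1^{\vee}$, the hypothesis $\deg\Line-\deg\Line_2>|x|$ (together with $\deg\Line_1\leq\deg\Line_2$) translates into $\deg\Line^{\vee}<\deg\Line_i^{\vee}-|x|$ for $i=1,2$, which is Lemma \ref{lem1}\ref{lem1-iv}(c). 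Moreover, the condition needs to hold for \emph{every} $\mathcal{M}'$ with $m_{x,1}(\mathcal{M}^{\vee},\mathcal{M}')\neq 0$, which follows by the same numerical check since passing to $\mathcal{M}'$ changes the degree by exactly $|x|$ and one verifies each of the subcases (a)--(c) of Lemma \ref{lem1}\ref{lem1-iv}. Hence $\E^{\vee}$ satisfies the hypothesis of Corollary \ref{cor-thm1}\ref{cor-thm1-ii}. Symmetrically, in case \textbf{(ii)}, dualizing converts the assumptions to Lemma \ref{lem1}\ref{lem1-ii}, placing us under the hypothesis of Corollary \ref{cor-thm1}\ref{cor-thm1-i}.

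Next I would apply Corollary \ref{cor-thm1} to $\E^{\vee}$ to read off $\mathcal{V}_{x,1}(\E^{\vee})$. In case \textbf{(i)} its neighbors are $\mathcal{M}^{\vee}\oplus\Line^{\vee}(-x)$ with weight $1$ and $\mathcal{M}'\oplus\Line^{\vee}$ with weight $m\,q_x$ for each $(\mathcal{M}^{\vee},\mathcal{M}',m)\in\mathcal{V}_{x,1}(\mathcal{M}^{\vee})$. To translate back to $\mathcal{V}_{x,2}(\E)$, I solve $\E'^{\vee}(-x)$ equal to each of these: the first yields $\E'^{\vee}=\mathcal{M}^{\vee}(x)\oplus\Line^{\vee}$, i.e.\ $\E'=\mathcal{M}(-x)\oplus\Line$; the second yields $\E'^{\vee}=\mathcal{M}'(x)\oplus\Line^{\vee}(x)$, i.e.\ $\E'=(\mathcal{M}'^{\vee}\oplus\Line)(-x)$. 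Together with the duality, this gives exactly the list claimed in \textbf{(i)}. The computation in case \textbf{(ii)} is the same, with the roles of the multiplicities $1$ and $q_x^2$ (and $m$ and $mq_x$) swapped, matching the stated description.

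The hard part is bookkeeping rather than genuine mathematics: one must check that the dualized numerical conditions line up correctly with Lemma \ref{lem1}, in particular that the condition for \emph{every} $\mathcal{M}'$ appearing in $\mathcal{V}_{x,1}(\mathcal{M}^{\vee})$ is preserved under the duality. Once this is verified, the statement is a formal consequence of Theorem \ref{thm-dualities} and Corollary \ref{cor-thm1}.
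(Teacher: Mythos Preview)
Your proof is correct and follows exactly the same approach as the paper: dualize via the second duality of Theorem \ref{thm-dualities}, verify that the hypotheses on $\mathcal{M},\Line$ translate into the hypotheses of Lemma \ref{lem1}\ref{lem1-iv} (resp.\ \ref{lem1-ii}) for $\mathcal{M}^{\vee},\Line^{\vee}$, apply Corollary \ref{cor-thm1} to $\E^{\vee}$, and translate back. The only minor imprecision is that in case \textbf{(i)} you cite Lemma \ref{lem1}\ref{lem1-iv}(a) for the indecomposable subcase, whereas the relevant point is that the condition $2\deg\Line^{\vee}-\deg\mathcal{M}^{\vee}<-2|x|$ simultaneously covers subcases (a) and (b), so the hypothesis of Corollary \ref{cor-thm1}\ref{cor-thm1-ii} is met regardless of whether $\mathcal{M}'$ is indecomposable or decomposable.
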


We conclude this section with an improvement of \cite[Theorem 4.15]{roberto-graphs} in rank $3$ case.

\begin{prop} Let $\E,\E', \E''\in \Bunvar_3 X$ such that $\E'$ (resp.\ $\E''$) is a neighbor of $\E$ in $\mathscr{G}_{x,1}^{(3)}$ (resp.\ $\mathscr{G}_{x,2}^{(3)}$). Then:
\smallskip

\begin{enumerate}[label=\textbf{{\upshape(\roman*)}}]

\item $\delta_1(\E') \in \big\{ \delta_1(\E) - 2|x| , \ldots, \delta_1(\E) +|x|\big\}$\; and \;$\delta_1(\E') - \delta_1(\E) \equiv |x| \; (\mathrm{mod}3);$ \medskip
\item $\delta_2(\E') \in \big\{ \delta_2(\E) - |x| , \ldots, \delta_2(\E) +2|x|\big\}$\; and \;$\delta_2(\E') - \delta_2(\E) \equiv 2|x| \; (\mathrm{mod}3);$ \medskip
\item $\delta_1(\E'') \in \big\{ \delta_1(\E) - |x| , \ldots, \delta_1(\E) +2|x|\big\}$\; and \;$\delta_1(\E'') - \delta_1(\E) \equiv 2|x| \; (\mathrm{mod}3);$ \medskip
\item $\delta_2(\E'') \in \big\{ \delta_2(\E) - 2|x| , \ldots, \delta_2(\E) +|x|\big\}$\; and \;$\delta_2(\E'') - \delta_2(\E) \equiv |x| \; (\mathrm{mod}3);$ \medskip
\end{enumerate}
where $x \in |X|$ is a closed point of degree $|x|$.

\begin{proof} Let $\mathcal{M}$ be a $2$-subbundle of $\E$ and $\mathcal{L} := \E/\mathcal{M}.$ Hence, $\mathcal{L}$ is a line bundle and, by duality, $\mathcal{L}^{\vee}$ is a line subbundle of $\E^{\vee}$. Moreover, $\E^{\vee} / \mathcal{L}^{\vee}$ is isomorphic to $\mathcal{M}^{\vee}$. We see that to maximize $\deg \mathcal{M}$ is equivalent to maximize $\deg \mathcal{L}^{\vee}$. Let $\mathcal{M}$ as above such that $\deg \mathcal{L}^{\vee}$ attains the maximum, then
\[ \deg \mathcal{L}^{\vee} = \frac{\deg \mathcal{E}^{\vee} + \delta_1(\E^{\vee})}{3}  \hspace{0.5cm}  \text{ and } \hspace{0.5cm}  \deg \mathcal{M} = \deg \E + \deg \mathcal{L}^{\vee}. \]
Therefore,
$$\delta_2(\E) = 3 \deg \mathcal{M} - 2\deg \E = \delta_1(\E^{\vee}).$$

If $\E''$ is a neighbor of $\E$ in $\mathscr{G}_{x,2}^{(3)}$, by \cite[Theorem 4.15]{roberto-graphs} and Theorem \ref{thm-dualities} we obtain
\[ \delta_2(\E'') = \delta_1(\E''^{\vee}) = \delta_1(\E''^{\vee}(-x)) \in \big\{ \delta_2(\E) - |x|, \ldots, \delta_2(\E)+2|x| \big\}. \]
Analogously we prove that $\delta_2(\E') \in \big\{ \delta_2(\E) - |x|, \ldots, \delta_2(\E) + 2|x| \big\}$ if $\E'$ is a neighbor of $\E$ in $\mathscr{G}_{x,1}^{(3)}.$ This together with \cite[Theorem 4.15]{roberto-graphs} finishes the proof.
\end{proof}

\end{prop}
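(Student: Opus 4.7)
The plan is to establish the identity $\delta_2(\E) = \delta_1(\E^\vee)$, valid for every rank $3$ bundle $\E$, and to combine it with the second duality of Theorem \ref{thm-dualities} in order to reduce the four claims to two essentially independent applications of \cite[Theorem 4.15]{roberto-graphs}.

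First I would prove $\delta_2(\E) = \delta_1(\E^\vee)$. Let $\mathcal{M}\subset\E$ be a $2$-subbundle with (necessarily locally free) quotient $\mathcal{L}=\E/\mathcal{M}$, a line bundle. Dualizing yields a short exact sequence $0\to\mathcal{L}^\vee\to\E^\vee\to\mathcal{M}^\vee\to 0$, so $\mathcal{L}^\vee$ is a line subbundle of $\E^\vee$. This establishes a bijection between $2$-subbundles of $\E$ and line subbundles of $\E^\vee$, under which maximizing $\deg\mathcal{M}$ corresponds to maximizing $\deg\mathcal{L}^\vee$. Since $\deg\mathcal{M}=\deg\E+\deg\mathcal{L}^\vee$, a direct computation gives
$$\delta_2(\E)\;=\;3\deg\mathcal{M}-2\deg\E\;=\;3\deg\mathcal{L}^\vee-\deg\E^\vee\;=\;\delta_1(\E^\vee).$$

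Next, parts (i) and (iii) bound $\delta_1$ of a neighbor in $\mathscr{G}_{x,1}^{(3)}$ and $\mathscr{G}_{x,2}^{(3)}$ respectively, and as such are direct instances of \cite[Theorem 4.15]{roberto-graphs}. The congruences modulo $3$ are visible from $\delta_1(\mathcal{F})=3d_1(\mathcal{F})-\deg\mathcal{F}$ together with the fact that any $\mathscr{G}_{x,r}^{(3)}$-neighbor of $\E$ has degree $\deg\E-r|x|$.

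Finally, I would deduce parts (ii) and (iv) by invoking the second duality of Theorem \ref{thm-dualities}, which asserts $m_{x,r}(\E,\E')=m_{x,n-r}(\E^\vee,\E'^\vee(-x))$. Hence if $\E'$ is a $\mathscr{G}_{x,1}^{(3)}$-neighbor of $\E$, then $\E'^\vee(-x)$ is a $\mathscr{G}_{x,2}^{(3)}$-neighbor of $\E^\vee$, and symmetrically for $\E''$. Combining this with the identity established in Step~1 and with the elementary invariance of $\delta_1$ under twisting by a line bundle, I obtain $\delta_2(\E')=\delta_1(\E'^\vee)=\delta_1(\E'^\vee(-x))$; applying part (iii) to the pair $(\E^\vee,\E'^\vee(-x))$ then yields the range and congruence required for (ii). An analogous argument, using part (i) applied to $(\E^\vee,\E''^\vee(-x))$, produces (iv). No step is genuinely difficult; the only subtlety is keeping track of the fact that the duality $r\mapsto n-r$ exchanges the two extremal shifts, so that the bounds in (i) and (iv), and those in (ii) and (iii), mirror each other as the statement demands.
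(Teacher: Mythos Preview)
Your proposal is correct and follows essentially the same route as the paper: both establish the identity $\delta_2(\E)=\delta_1(\E^\vee)$ via the bijection between $2$-subbundles of $\E$ and line subbundles of $\E^\vee$, invoke \cite[Theorem 4.15]{roberto-graphs} directly for the $\delta_1$ bounds in (i) and (iii), and deduce (ii) and (iv) by combining that identity with the second duality of Theorem~\ref{thm-dualities} and the twist-invariance of $\delta_1$. Your write-up is in fact slightly more explicit than the paper's about the congruence modulo $3$ and about which instance of (i)/(iii) is being applied on the dual side.
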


\section{Graphs of rank \texorpdfstring{$3$}{3} and degree \texorpdfstring{$1$}{1}}
\label{sec-mainthm}

In this section we state the main theorem of this work, which consists of a description of the graphs $\mathscr{G}_{x,1}^{(3)}$ of the Hecke operators $\Phi_{x,1}$ (over $\GL_3$) for a closed point $x$ of degree one over an elliptic curve. We begin recalling Atiyah's famous theorem about classification on vector bundles on elliptic curves and then we provide a description of all rank $3$ vector bundles on an elliptic curve.

\begin{thm}[Atiyah] \label{atiyahtheorem} Let $X$ be an elliptic curve defined over $\Fq$. The choice of any rational point $x_0 \in X(\mathbb{F}_q)$ induces an exact equivalence of abelian categories $\epsilon_{\nu,\mu}: \mathsf{C}_{\mu} \rightarrow \mathsf{C}_{\nu}$ for any $\mu,\nu \in \Q \cup \{\infty\}$.
\end{thm}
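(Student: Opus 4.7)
The plan is to exhibit $\epsilon_{\nu,\mu}$ as a composition of two basic autoequivalences of $\Coh(X)$ (or, where necessary, of $D^b(\Coh X)$), both depending on the base point $x_0$, that together generate enough operations to connect any two slopes.

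First, I would use the twist by the line bundle $\cO_X(x_0)$: this is an exact autoequivalence of $\Coh(X)$ which sends a semistable sheaf of rank $n$ and degree $d$ to one of rank $n$ and degree $d+n$, hence of slope $\mu+1$. Since all subsheaves undergo the same shift, semistability is preserved, so tensoring restricts to exact equivalences $\sC_\mu \simeq \sC_{\mu+1}$, and dually $\sC_\mu \simeq \sC_{\mu-1}$ via $\cO_X(-x_0)$. Second, using $x_0$ one normalizes the Poincaré line bundle $\cP$ on $X \times X$ so that $\cP|_{\{x_0\}\times X}$ is trivial, and by Mukai's theorem the associated integral transform $\Phi : D^b(\Coh X) \to D^b(\Coh X)$ is an autoequivalence. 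Numerically $\Phi$ swaps rank and degree up to sign, sending slope $\mu \in \Q \setminus \{0\}$ to $-1/\mu$, slope $\infty$ to $0$, and slope $0$ to $\infty$. A WIT-type argument, controlling $\Ext^i_X(\F,\cP_y)$ for $y \in X$ by Serre duality and base change, then shows that for a nonzero $\F \in \sC_\mu$ the derived transform $\Phi(\F)$ is concentrated in a single cohomological degree and is again semistable of the predicted new slope; composing with the shift $[1]$ where necessary yields an exact abelian equivalence $\sC_\mu \simeq \sC_{-1/\mu}$.

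To finish, note that the two operations $T : \mu \mapsto \mu+1$ and $S : \mu \mapsto -1/\mu$ generate the standard $\mathrm{PSL}_2(\Z)$-action on $\P^1(\Q) = \Q \cup \{\infty\}$, which is transitive. Given $\mu, \nu \in \Q \cup \{\infty\}$, pick a word $w$ in $S, T, T^{-1}$ carrying $\mu$ to $\nu$ and define $\epsilon_{\nu,\mu}$ as the corresponding composition of functors (with shifts inserted so that the image lies in $\sC_\nu$ rather than in a shift of it). Since each factor is an exact equivalence of abelian categories, so is the composition.

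The main obstacle is the verification that $\Phi$ restricts to an equivalence of \emph{abelian} categories — i.e.\ the WIT property and preservation of semistability. Concretely, one must show that for a nonzero $\F \in \sC_\mu$ with $\mu \ne 0$ the cohomology sheaves $R^i\Phi(\F)$ vanish for all but one $i$, and that the surviving sheaf is semistable; for torsion sheaves and slope-$0$ semistables one has to analyze the two extreme cases separately. This rests on the specific geometry of elliptic curves, where $\Hom$ and $\Ext^1$ between semistable sheaves of distinct slopes vanish in one direction, and it is treated in detail by Mukai and in Polishchuk's exposition of Fourier transforms on abelian varieties; the original incarnation goes back to Atiyah's classification.
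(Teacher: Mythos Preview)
The paper does not prove this theorem; it is stated as a cited result, attributed to Atiyah \cite[Thm.~7]{atiyah-elliptic} for algebraically closed fields and to Burban--Schiffmann \cite[Thm.~1.1]{olivier-elliptic1} for the extension to finite fields. There is therefore no in-paper argument to compare against.

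Your sketch is correct and is in fact the modern route to the result: twists by $\cO_X(x_0)$ and the Fourier--Mukai transform generate an $\mathrm{SL}_2(\Z)$-action on the set of slopes, and the WIT argument upgrades the derived equivalence to an abelian one on each $\sC_\mu$. This is essentially the Burban--Schiffmann formulation (and, via Seidel--Thomas twists, the picture used throughout the elliptic Hall algebra literature). Atiyah's original argument is more hands-on---he constructs the equivalences inductively by building canonical extensions rather than invoking Fourier--Mukai---but the end effect is the same $\mathrm{SL}_2(\Z)$-symmetry you describe. One small point: since the statement is over $\FF_q$, you should note that both the twist and the Fourier--Mukai transform (with kernel the normalized Poincar\'e bundle on $X\times X$) are defined over the base field once $x_0$ is rational, so no descent issue arises.
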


Atiyah's proof of Theorem \ref{atiyahtheorem} also provides an algorithm to compute the equivalences, cf.\ \cite[Thm. 7]{atiyah-elliptic}. His proof holds for an algebraically closed field of any characteristic. Burban and Schiffmann in \cite[Thm. 1.1]{olivier-elliptic1}  revamp the statement and extend Atiyah's theorem to finite fields.


\subsection{Classification of rank \texorpdfstring{$3$}{3} bundles}

We introduce the following notation that will be used throughout the paper.  From Atiyah's classification, an indecomposable vector bundle $\E$ of rank $n$ and degree $d$ corresponds to a torsion sheaf $\mathcal{K}_{y}^{(l)}$, as introduced in subsection \ref{subsec-cohsheaves}, for some  $y \in |X|$ and $l \in \Z_{>0}$ such that $l |y| = \gcd(n,d)$. Therefore $\E$ is completely determined by its rank $n$, degree $d$, a positive integer $l$ and a closed point $y.$ We shall denote $\E$ by $\E_{(y,l)}^{(n,d)}$.

\begin{rem} \label{rem-traces}
An extension of scalars $\mathbb{F}_{q^m}F/F$, or geometrically $\pi: X_m  \rightarrow X$ where $X_m := X \otimes \mathbb{F}_{q^m},$ defines the constant extension of vector bundles 
\[ \pi^{*} : \Bun_n X \longrightarrow \Bun_n X_m\]
as pull backs along $\pi$. As a consequence of Lang's theorem \cite[Cor. to Thm. 1]{lang}, $\pi^{*}$ is injective. On the other hand, $\pi: X_m  \rightarrow X$ defines the \textit{trace} of vector bundles
\[ \pi_{*} : \Bun_n X_m\longrightarrow \Bun_{mn} X\]
as the direct image. As shown in  \cite[Thm. 1.8]{arason}, every indecomposable vector bundle over $X$ is the trace of a geometrically indecomposable bundle over some constant extension $X_m$ of $X$.

\end{rem}

\begin{ex}
The trace of a rank $n$ bundle over $X_m$ is the direct image sheaf with respect to $\pi:X_m\to X$, which is a rank $mn$ bundle. If $y \in |X|$ of degree $m$ and $\tilde y$ is a degree $1$ place of $X_m$ over $y$, then $\cE_{(y,l)}^{(n',d')}$ is the trace of the geometrically indecomposable bundle $\cE_{(\tilde y,l)}^{(n,d)}$,  for $n' = mn$ and $d'=md$.
\end{ex}

\begin{ex}
 The (indecomposable) line bundles over $X$ are the bundles $\cE_{(x,1)}^{(1,d)}$ with $x\in|X|$ of degree $1$ and $d\in\Z$. For $n=2,3$, the indecomposable rank $n$ bundles over $X$ are the bundles $\cE_{(x,l)}^{(n,d)}$ with $x\in|X|$ of degree $1$, $d\in\Z$ and $l=\gcd(n,d)$ (geometrically indecomposable bundles) and $\cE_{(y,1)}^{(n,nd)}$ with $y\in|X|$ of degree $n$ and $d\in\Z$ (traces of line bundles over $X_n$). 
\end{ex}

In the following, we give a complete list of isomorphism classes of rank $3$ bundles on $X$.
To begin with, the Krull-Schmidt theorem  \cite[Thm. 2]{atiyah-krull} holds for the category of vector bundles over $X$, i.e.\ every vector bundle on $X$  has a unique decomposition into a direct sum of indecomposable subbundles, up to permutation of factors. 

By our previous discussion, every indecomposable bundle on $X$ is the trace $\pi_\ast\cE$ of a bundle $\cE$ on a finite extension $X_m$ of $X$, which satisfies $\rk(\pi_\ast\cE)=m\cdot\rk(\cE)$. Thus the indecomposable bundles of rank at most $3$ on $X$ are geometrically indecomposable bundles of rank at most $3$ and traces of line bundles over $X_2$ and $X_3$.

Taking direct sums of all possible combinations of indecomposable bundles yields the following list of rank $3$ bundles on $X$:
\begin{align*}
&  \E_{(y,l)}^{(3,d)} && \text{ for $|y|=1$ or $3$ and $d \in \Z$,} \\
& \E_{(x_1,1)}^{(1,d_1)} \oplus \E_{(x_2,l_2)}^{(2,d_2)} && \text{ for $|x_1|=1$, $|x_2|=1$ or $2$ and $d_1,d_2 \in \Z$,} \\
& \E_{(x_1,1)}^{(1,d_1)} \oplus \E_{(x_2,1)}^{(1,d_2)}  \oplus \E_{(x_3,1)}^{(1,d_3)}  && \text{ for $|x_1|=|x_2|=|x_3|=1$ and $d_1,d_2,d_3 \in \Z$. }
\end{align*}

Note that this justifies that Theorem \ref{maintheorem} provides a full description of the graphs  $\mathscr{G}_{x,1}^{(3)}$ where $x$ is a degree one place of $X$.

\begin{rem}
 Note that we can derive readily a list of representatives for $\PBun_3 X$ from the above description, which is as follows:
 \begin{align*}
&  \E_{(y,l)}^{(3,d)} && \text{ for $|y|=1$ or $3$ and $d \in \{0,1,2\}$} \\
& \E_{(x_1,1)}^{(1,d_1)} \oplus \E_{(x_2,l_2)}^{(2,d_2)} && \text{ for $|x_1|=1$, $|x_2|=1$ or $2$, $d_1,\in \Z$ and $d_2 \in \{0,1\}$} \\
& \E_{(x_1,1)}^{(1,0)} \oplus \E_{(x_2,1)}^{(1,d_2)}  \oplus \E_{(x_3,1)}^{(1,d_3)}  && \text{ for $|x_1|=|x_2|=|x_3|=1$ and $d_2,d_3 \in \Z$. }
\end{align*}
This allows us to deduce a description of $\overline{\mathscr{G}}_{x,1}^{(3)}$ from Theorem \ref{maintheorem}.
\end{rem}


\subsection{The main theorem}
We are prepared to state the central result of this paper. Identities of divisors in the theorem are taken modulo the subgroup $\gen{x_0}$ of $\Pic X$ where $x_0$ is the base point of our elliptic curve $X$; e.g.\ if we write $z=x+y$, then this has to be read as the equality $z-|z|\cdot x_0=(x-|x|\cdot x_0)+(y-|y|\cdot x_0)$ in $\Pic^0 X$.

\begin{thm} \label{maintheorem} Let $X$ be an elliptic curve defined over a finite field $\Fq$ and $x$ be a degree one closed point at $X$. Then the  graph $\mathscr{G}_{x,1}^{(3)}$  is given as follows.

\begin{enumerate}[label=\textbf{{\upshape(\roman*)}}, wide=0pt]
\item \label{i1} Let  $\E := \E_{(y,1)}^{(3,0)}$, then
\[ \mathcal{V}_{x,1}(\E) = \Big\{ \big(\E, \E_{(x',1)}^{(3,-1)}, q^2 + q +
 1\big)  \Big\}, \]
where  $x'= y -x$.\\

\item \label{i2} Let $\E := \E_{(y,3)}^{(3,0)}$ , then
\[\mathcal{V}_{x,1}(\E) = \Big\{ \big( \E, \E_{(x',1)}^{(3,-1)}, q^2\big), \big(\E, \E_{(x'',1)}^{(2,-1)} \oplus \E_{(y,1)}^{(1,0)}, q\big), \big( \E, \E_{(y-x,1)}^{(1,-1)} \oplus \E_{(y,2)}^{(2,0)}, 1 \big)  \Big\},\]
where $x' = 3y-x$ and $x'' = 2y-x.$  \\

\item \label{i3}  Let $\E := \E_{(y,1)}^{(3,1)}$, then
\begin{align*}
\mathcal{V}_{x,1}(\E) = \Big\{ &\big(\E, \E_{(z,3)}^{(3,0)}, 1\big), \big(\E, \E_{(w,1)}^{(3,0)}, 1\big) ,
 \big(\E, \E_{(x_1,1)}^{(1,0)} \oplus \E_{(x_2,1)}^{(1,0)} \oplus \E_{(x_3,1)}^{(1,0)}, 1\big), \\
   & \big(\E, \E_{(z_1,1)}^{(1,0)} \oplus \E_{(z_2,1)}^{(2,0)} , 1\big), \big(\E, \E_{(y',1)}^{(1,0)} \oplus \E_{(y'',2)}^{(2,0)}, 1\big) \Big\}
\end{align*}
where $3z = y-x$, $w = y-x,$ $ x_1, x_2, x_3$ are pairwise distinct and $ x_1 + x_2 + x_3 = y-x,$
$z_2 + z_1 = y-x$, and $ y'\neq y''$ with $y' + 2y'' = y-x$. \\

\item \label{i4}  Let $\E : = \E_{(y,1)}^{(3,2)}$, then
\[ \mathcal{V}_{x,1}(\E) =  \Big\{ \big(\E, \E_{(y-x,1)}^{(3,1)}, q^2 + q + 1 - N_1 \big), \big(\E, \E_{(x',1)}^{(1,0)} \oplus \E_{(x'',1)}^{(2,1)} , 1\big) \Big\},\]
where $x',x''\in |X|$ are such that $ x' + x'' = y-x.$
\\

\item \label{i5} Let $\E :=  \E_{(y,2)}^{(2,0)} \oplus \E_{(y',1)}^{(1,0)}  $ with $y \neq y' $, then,
\begin{align*}
\mathcal{V}_{x,1}(\E) &=  \Big\{ \big(\E, \E_{(z,1)}^{(3,-1)}, q^2 -q\big), \big(\E, \E_{(y'-x,1)}^{(1,-1)} \oplus \E_{(y,2)}^{(2,0)},  1 \big), \big(\E, \E_{(y-x,1)}^{(2,-1)} \oplus \E_{(y,1)}^{(1,0)},  q \big), \\
& \hspace*{0.8cm}  \big(\E, \E_{(x',1)}^{(2,-1)} \oplus \E_{(x'',1)}^{(1,0)},  q-1 \big), \big(\E, \E_{(y-x,1)}^{(1,-1)} \oplus \E_{(y,1)}^{(1,0)} \oplus \E_{(y',1)}^{(1,0)}  ,  1 \big) \Big\},
\end{align*}
where $z = y' + 2y-x$, $x' = y + y'-x$  and  $x'' = x+y-y'$. \\

\item \label{i6} Let $\E := \E_{(y,2)}^{(2,0)} \oplus \E_{(y,1)}^{(1,0)},  $ then,

\begin{align*}
\mathcal{V}_{x,1}(\E) &=  \Big\{ \big(\E, \E_{(y-x,1)}^{(1,-1)} \oplus \E_{(y,1)}^{(1,0)} \oplus \E_{(y,1)}^{(1,0)}, 1 \big),
 \big(\E, \E_{(y-x,1)}^{(2,-1)} \oplus \E_{(y,1)}^{(1,0)}, q^2 \big), \\
 & \hspace*{0.8cm} \big(\E, \E_{(y-x,1)}^{(1,-1)} \oplus \E_{(y,2)}^{(2,0)} , q \big) \Big\}.
\end{align*}

\item \label{i7} Let $\E :=  \E_{(y,2)}^{(2,0)} \oplus \E_{(y',1)}^{(1,1)}  $ with $y \neq y' - x$, then
\begin{align*}
\mathcal{V}_{x,1}(\E) &=  \Big\{ \big(\E, \E_{(2y-x,1)}^{(2,-1)} \oplus \E_{(y',1)}^{(1,1)} , q\big),
\big(\E, \E_{(y-x,1)}^{(1,-1)} \oplus \E_{(y,1)}^{(1,0)}  \oplus \E_{(y',1)}^{(1,1)} ,  1 \big), \\
& \hspace*{0.8cm} \big(\E, \E_{(y,2)}^{(2,0)}  \oplus \E_{(y'-x,1)}^{(1,0)},  q^2 \big) \Big\}.
\end{align*}

\item \label{i8} Let $\E := \E_{(y,2)}^{(2,0)} \oplus \E_{(y',1)}^{(1,1)},  $ with $y = y' - x$, then

\begin{align*}
\mathcal{V}_{x,1}(\E) &=  \Big\{ \big(\E, \E_{(2y-x,1)}^{(2,-1)} \oplus \E_{(y',1)}^{(1,1)} , q\big),
\big(\E, \E_{(y-x,1)}^{(1,-1)} \oplus \E_{(y,1)}^{(1,0)}  \oplus \E_{(y',1)}^{(1,1)} ,  1 \big), \\
& \hspace*{0.8cm} \big(\E,  \E_{(y,3)}^{(3,0)} ,  q^2-q \big),  \big(\E,  \E_{(y,2)}^{(2,0)} \oplus \E_{(y,1)}^{(1,0)}  ,  q \big) \Big\}.
\end{align*}

\item \label{i9} Let $\E := \E_{(y,2)}^{(2,0)} \oplus \E_{(y',1)}^{(1,-1)}$,  with $y \neq y' + x$, then
\begin{align*}
 \mathcal{V}_{x,1}(\E) &=  \Big\{ \big(\E, \E_{(y',1)}^{(1,-1)} \oplus \E_{(2y-x,1)}^{(2,-1)} , q^2\big),
\big(\E, \E_{(y'-x,1)}^{(1,-2)} \oplus \E_{(y,2)}^{(2,0)}  ,  1 \big), \\
 & \hspace*{0.8cm}\big(\E, \E_{(y',1)}^{(1,-1)}  \oplus \E_{(y-x,1)}^{(1,-1)} \oplus \E_{(y,1)}^{(1,0)},  q \big) \Big\}.
\end{align*}

\item \label{i10} Let $\E := \E_{(y,2)}^{(2,0)} \oplus \E_{(y',1)}^{(1,-1)}$,  with $y = y' + x$, then
\begin{align*}
\mathcal{V}_{x,1}(\E) &=  \Big\{ \big(\E, \E_{(y',1)}^{(1,-1)} \oplus \E_{(2y-x,1)}^{(2,-1)} , q^2\big),
\big(\E, \E_{(y',2)}^{(2,-2)} \oplus \E_{(y,1)}^{(1,0)},  q-1 \big),\\
&\hspace*{0.8cm}   \big(\E, \E_{(y'-x,1)}^{(1,-2)} \oplus \E_{(y,2)}^{(2,0)}, 1 \big),
\big(\E, \E_{(y',1)}^{(1,-1)} \oplus \E_{(y',1)}^{(1,-1)} \oplus \E_{(y,1)}^{(1,0)},  1 \big)\Big\}.
\end{align*}

\item \label{i11} Let $\E := \E_{(y,2)}^{(2,0)} \oplus \E_{(y',1)}^{(1,d)}$, with $|d| \geq 2,$ then \smallskip
\begin{align*}
\mathcal{V}_{x,1}(\E) &=  \Big\{ \big(\E, \E_{(y,2)}^{(2,0)}  \oplus \E_{(y'-x,1)}^{(1,d-1)},  q^2 \big) , \\
& \hspace*{0.8cm} \big(\E, \E_{(2y-x,1)}^{(2,-1)} \oplus \E_{(y',1)}^{(1,d)} , q\big),
\big(\E, \E_{(y-x,1)}^{(1,-1)} \oplus \E_{(y,1)}^{(1,0)}  \oplus \E_{(y',1)}^{(1,d)} ,  1 \big)\Big\} 
\end{align*}
if $d \geq 2,$ and
\begin{align*}
 \mathcal{V}_{x,1}(\E) &=  \Big\{ \big(\E, \E_{(y'-x,1)}^{(1,d-1)} \oplus \E_{(y,2)}^{(2,0)}  ,  1 \big)
\big(\E, \E_{(y',1)}^{(1,d)} \oplus \E_{(2y-x,1)}^{(2,-1)} , q^2\big), \\
& \hspace*{0.8cm} \big(\E, \E_{(y',1)}^{(1,d)}  \oplus \E_{(y-x,1)}^{(1,-1)} \oplus \E_{(y,1)}^{(1,0)},  q \big) \Big\} 
\end{align*}
if $d \leq - 2.$\\

\item \label{i12} Let $\E :=  \E_{(y,1)}^{(2,0)} \oplus \E_{(y',1)}^{(1,0)},$ then
\begin{align*}
\mathcal{V}_{x,1}(\E) &=  \Big\{ \big(\E, \E_{(x',1)}^{(3,-1)}, q^2- 1 \big) , \big(\E, \E_{(y,1)}^{(2,0)} \oplus \E_{(y'-x,1)}^{(1,-1)}, 1 \big),
 \big(\E, \E_{(z,1)}^{(2,-1)} \oplus \E_{(y',1)}^{(1,0)}, q+1 \big)  \Big\}
\end{align*}
where  $x' = y'+ y - x$ and  $z=  y -x $. \\

\item \label{i13} Let $\E := \E_{(y,1)}^{(2,0)} \oplus \E_{(y',1)}^{(1,d)}$, with $|d| \geq 1,$ then \smallskip
\[
\mathcal{V}_{x,1}(\E) =   \Big\{ \big(\E, \E_{(y,1)}^{(2,0)} \oplus \E_{(y'-x,1)}^{(1,d-1)}, q^2 \big),
 \big(\E, \E_{(z,1)}^{(2,-1)} \oplus \E_{(y',1)}^{(1,d)}, q+1\big)  \Big\}  \]
if $d \geq 1,$ and
\[ \mathcal{V}_{x,1}(\E) =   \Big\{ \big(\E, \E_{(y,1)}^{(2,0)} \oplus \E_{(y'-x,1)}^{(1,d-1)}, 1 \big),
 \big(\E, \E_{(z,1)}^{(2,-1)} \oplus \E_{(y',1)}^{(1,d)}, q^2 + q\big)  \Big\}  \]
if $d \leq -1$, where  $z=  y-x $. \\


\item \label{i14} Let $\E := \E_{(y,1)}^{(2,1)} \oplus \E_{(y',1)}^{(1,0)}$, then \smallskip
\begin{align*}
\mathcal{V}_{x,1}(\E)  & =  \Big\{  \big(\E, \E_{(y',1)}^{(1,0)} \oplus \E_{(x_1,1)}^{(1,0)} \oplus \E_{(x_2,1)}^{(1,0)}, q \big),
\big(\E, \E_{(y',1)}^{(1,0)} \oplus \E_{(z,1)}^{(2,0)}, q\big), \\
 &  \hspace*{0.8cm}  \big(\E, \E_{(y',1)}^{(1,0)} \oplus \E_{(w,2)}^{(2,0)}, q\big),   \big(\E, \E_{(x',1)}^{(1,0)} \oplus \E_{(y',2)}^{(2,0)}, q-1\big),
 \big(\E, \E_{(y,1)}^{(2,1)} \oplus \E_{(y'-x,1)}^{(1,-1)}, 1\big) \Big\}    \\
 & \bigcup  \Big\{    \big(\E, \E_{(y',1)}^{(1,0)} \oplus \E_{(y',1)}^{(1,0)} \oplus \E_{(y-y'-x,1)}^{(1,0)}, 1\big)     \;\big|\; \text{ if }  y \neq y'  \Big\}    \\
& \bigcup  \Big\{   \big(\E, \E_{(y',1)}^{(1,0)} \oplus \E_{(y',2)}^{(2,0)}, 1\big), \big(\E, \E_{(y',3)}^{(3,0)}, q-1\big) \;\big|\; \text{ if }  y=2y'+x \Big\}
\end{align*}
where $y', x_1, x_2$ are pairwise distinct and $x_1 + x_2 = y-x,$ $z$ is such that $z=y-x$,
$2w=y-x$ with $w\neq y'$ and  $x'+ x= y-y'$ with $x'\neq y'$. \\

\item \label{i15} Let $\E := \E_{(y,1)}^{(2,1)} \oplus \E_{(y',1)}^{(1,1)}$, then \smallskip
\begin{align*}
\mathcal{V}_{x,1}(\E)  & =  \Big\{  \big(\E, \E_{(z,1)}^{(2,0)} \oplus \E_{(y',1)}^{(1,1)}, 1 \big),
\big(\E, \E_{(x_1,1)}^{(1,0)} \oplus \E_{(x_2,1)}^{(1,0)} \oplus \E_{(y',1)}^{(1,1)}, 1 \big), \\
& \hspace*{0.8cm} \big(\E, \E_{(w,2)}^{(2,0)} \oplus \E_{(y',1)}^{(1,1)}, 1 \big),\big(\E, \E_{(y,1)}^{(2,1)} \oplus \E_{(y'-x,1)}^{(1,0)}, q \big),
\big(\E, \E_{(z',1)}^{(3,1)}, q^2-q \big) \Big\}
\end{align*}
where $z \in |X|$ of degree two is such that $z= y-x$, $x_1 \neq x_2$ are such that  $x_1 +x_2 =y-x$, $2w=y-x$ and  $z' = y+y'-x$.\\

\item \label{i16} Let $\E := \E_{(y,1)}^{(2,1)} \oplus \E_{(y',1)}^{(1,d)}$, then \smallskip
\begin{align*}
\mathcal{V}_{x,1}(\E)  & =  \Big\{  \big(\E, \E_{(z,1)}^{(2,0)} \oplus \E_{(y',1)}^{(1,d)}, 1 \big),
\big(\E, \E_{(x_1,1)}^{(1,0)} \oplus \E_{(x_2,1)}^{(1,0)} \oplus \E_{(y',1)}^{(1,d)}, 1 \big), \\
& \hspace*{0.8cm} \big(\E, \E_{(w,2)}^{(2,0)} \oplus \E_{(y',1)}^{(1,d)}, 1 \big), \big(\E, \E_{(y,1)}^{(2,1)} \oplus \E_{(y'-x,1)}^{(1,d-1)}, q^2 \big)\Big\}
\end{align*}
if $d \geq 2$, and
\begin{align*}
\mathcal{V}_{x,1}(\E)  & =  \Big\{  \big(\E, \E_{(z,1)}^{(2,0)} \oplus \E_{(y',1)}^{(1,d)}, q \big),
\big(\E, \E_{(x_1,1)}^{(1,0)} \oplus \E_{(x_2,1)}^{(1,0)} \oplus \E_{(y',1)}^{(1,d)}, q \big), \\
& \hspace*{0.8cm} \big(\E, \E_{(w,2)}^{(2,0)} \oplus \E_{(y',1)}^{(1,d)}, q \big), \big(\E, \E_{(y,1)}^{(2,1)} \oplus \E_{(y'-x,1)}^{(1,d-1)}, 1 \big)\Big\}
\end{align*}
if $d \leq -1$, where in each case $z \in |X|$ of degree two is such that $z = y-x$, $x_1 \neq x_2$ are such that  $x_1 +x_2 =y-x$ and $2w=y-x$. \\

\item \label{i17}  Let $\E := \E_{(y,1)}^{(1,0)} \oplus \E_{(y,1)}^{(1,0)} \oplus \E_{(y,1)}^{(1,0)}$, then
$$ \mathcal{V}_{x,1}(\E) = \Big\{ \big( \E, \E_{(y-x,1)}^{(1,-1)} \oplus \E_{(y,1)}^{(1,0)} \oplus \E_{(y,1)}^{(1,0)}, q^2 + q + 1 \big) \Big\}.  $$ \smallskip

\item Let \label{i18}  $\E = \E_{(y,1)}^{(1,0)} \oplus \E_{(y,1)}^{(1,0)} \oplus \E_{(y',1)}^{(1,0)}$ with $y \neq y'$, then
\begin{align*}
\mathcal{V}_{x,1}(\E) &=  \Big\{ \big(\E, \E_{(y,1)}^{(1,0)} \oplus \E_{(y,1)}^{(1,0)} \oplus \E_{(y'-x,1)}^{(1,-1)},  1 \big),
 \big(\E,\E_{(y-x,1)}^{(1,-1)} \oplus \E_{(y,1)}^{(1,0)} \oplus \E_{(y',1)}^{(1,0)},  q+1 \big) , \\
 & \hspace*{0.8cm} \big(\E, \E_{(2y-x,1)}^{(2,-1)} \oplus \E_{(y,1)}^{(1,0)},  q^2-1 \big) \Big\}.
\end{align*} \smallskip

\item \label{i19} Let $\E := \E_{(y_1,1)}^{(1,0)} \oplus \E_{(y_2,1)}^{(1,0)} \oplus \E_{(y_3,1)}^{(1,0)}$, with $y_1,  y_2, y_3$ pairwise distinct, then
\begin{align*}
\mathcal{V}_{x,1}(\E) &=  \Big\{ \big(\E, \E_{(z,1)}^{(3,-1)}, q^2 -2 q + 1 \big),
\big(\E, \E_{(y_i - x,1)}^{(1,-1)} \oplus \E_{(y_j,1)}^{(1,0)} \oplus \E_{(y_k,1)}^{(1,0)},  1 \big), \\
& \hspace*{0.8cm} \big(\E, \E_{(z_i,1)}^{(2,-1)}  \oplus \E_{(y_i,1)}^{(1,0)},  q-1 \big) \Big\}
\end{align*}
where $z = y_1 +  y_2 + y_3 - x $  and $z_i = y_j+y_k - x $ for $i,j,k = 1,2,3$ are such that $\{ i,j,k \} = \{1,2,3\}$.\\

\item \label{i20} Let $\E := \E_{(y,1)}^{(1,0)} \oplus \E_{(y_1,1)}^{(1,0)} \oplus \E_{(y_2,1)}^{(1,1)}$, then
\begin{align*}
\mathcal{V}_{x,1}(\E) &=  \Big\{ \big(\E, \E_{(y,1)}^{(1,0)} \oplus \E_{(y_1,1)}^{(1,0)} \oplus \E_{(y_2-x,1)}^{(1,0)}, q^2  \big),
\big(\E, \E_{(y-x,1)}^{(1,-1)} \oplus \E_{(y_1,1)}^{(1,0)} \oplus \E_{(y_2,1)}^{(1,1)},  1 \big),  \\
& \hspace*{0.8cm} \big(\E, \E_{(y_1-x,1)}^{(1,-1)} \oplus \E_{(y,1)}^{(1,0)} \oplus \E_{(y_2,1)}^{(1,1)},  1 \big),
\big(\E, \E_{(y+y_1-x,1)}^{(2,-1)} \oplus  \E_{(y_2,1)}^{(1,1)},  q-1 \big) \Big\}
\end{align*}
if $y,y_1, y_2 -x$ are pairwise distinct,
\begin{align*}
\mathcal{V}_{x,1}(\E) &=  \Big\{ \big(\E, \E_{(y,1)}^{(1,0)} \oplus \E_{(y_2,2)}^{(2,0)}, q^2-q  \big),
\big(\E, \E_{(y,1)}^{(1,0)} \oplus \E_{(y_1,1)}^{(1,0)} \oplus \E_{(y_1,1)}^{(1,0)},  q  \big) \\
& \hspace*{0.8cm} \big(\E, \E_{(y-x,1)}^{(1,-1)} \oplus \E_{(y_1,1)}^{(1,0)} \oplus \E_{(y_2,1)}^{(1,1)},  1 \big),
\big(\E, \E_{(y_1-x,1)}^{(1,-1)} \oplus \E_{(y,1)}^{(1,0)} \oplus \E_{(y_2,1)}^{(1,1)},  1 \big), \\
& \hspace*{0.8cm} \big(\E, \E_{(y+y_1-x,1)}^{(2,-1)} \oplus  \E_{(y_2,1)}^{(1,1)},  q-1 \big) \Big\}
\end{align*}
if $y \neq y_1$ and $y_2 = y_1 +x$,
\begin{align*}
\mathcal{V}_{x,1}(\E) &=  \Big\{ \big(\E, \E_{(y,1)}^{(1,0)} \oplus \E_{(y,2)}^{(2,0)}, q^2-1  \big),
\big(\E, \E_{(y,1)}^{(1,0)} \oplus \E_{(y,1)}^{(1,0)} \oplus \E_{(y,1)}^{(1,0)},  1  \big) \\
& \hspace*{0.8cm}
\big(\E, \E_{(y-x,1)}^{(1,-1)} \oplus \E_{(y,1)}^{(1,0)} \oplus \E_{(y_2,1)}^{(1,1)},  q+1 \big) \Big\}
\end{align*}
if $y=y_1$ and $y_2 = y +x$, and
\begin{align*}
\mathcal{V}_{x,1}(\E) &=  \Big\{ \big(\E,  \E_{(y,1)}^{(1,0)} \oplus \E_{(y,1)}^{(1,0)} \oplus \E_{(y_2-x,1)}^{(1,0)}, q^2  \big),
\big(\E, \E_{(y-x,1)}^{(1,-1)} \oplus \E_{(y,1)}^{(1,0)} \oplus \E_{(y_2,1)}^{(1,1)},  q+1 \big) \Big\}
\end{align*}
if $y=y_1$ and $y_2 \neq y +x$.\\

\item \label{i21} Let $\E := \E_{(y,1)}^{(1,0)} \oplus \E_{(y_1,1)}^{(1,0)} \oplus \E_{(y_2,1)}^{(1,-1)}$, then
\begin{align*}
\mathcal{V}_{x,1}(\E) &=  \Big\{ \big(\E, \E_{(y,1)}^{(1,0)} \oplus \E_{(y-x,1)}^{(1,-1)} \oplus \E_{(y_2,1)}^{(1,-1)}, q^2+q  \big),
\big(\E, \E_{(y_2-x,1)}^{(1,-2)} \oplus \E_{(y,1)}^{(1,0)} \oplus \E_{(y,1)}^{(1,0)},  1 \big)\Big\}
\end{align*}
if $y=y_1$ and $y \neq y_2 +x$;
\begin{align*}
\mathcal{V}_{x,1}(\E) &=  \Big\{ \big(\E, \E_{(y_2,2)}^{(2,-2)} \oplus \E_{(y,1)}^{(1,0)} , q^2 -1  \big),
\big(\E, \E_{(y_2,1)}^{(1,-1)} \oplus \E_{(y_2,1)}^{(1,-1)} \oplus \E_{(y,1)}^{(1,0)},  q+1 \big) \\
& \hspace*{0.8cm} \big(\E, \E_{(y_2-x,1)}^{(1,-2)} \oplus \E_{(y,1)}^{(1,0)} \oplus \E_{(y,1)}^{(1,0)},  1 \big)\Big\}
\end{align*}
if $y=y_1$ and $y = y_2 +x$;
\begin{align*}
\mathcal{V}_{x,1}(\E) &=  \Big\{ \big(\E, \E_{(y_2,1)}^{(1,-1)} \oplus \E_{(y+y_1-x,1)}^{(2,-1)} , q^2 -q  \big),
\big(\E, \E_{(y_2,1)}^{(1,-1)} \oplus \E_{(y_1 -x,1)}^{(1,-1)} \oplus \E_{(y,1)}^{(1,0)},  q \big) \\
& \hspace*{0.8cm}
\big(\E, \E_{(y_2,1)}^{(1,-1)} \oplus \E_{(y -x,1)}^{(1,-1)} \oplus \E_{(y_1,1)}^{(1,0)},  q \big)
\big(\E, \E_{(y_2-x,1)}^{(1,-2)} \oplus \E_{(y,1)}^{(1,0)} \oplus \E_{(y_1,1)}^{(1,0)},  1 \big)\Big\}
\end{align*}
if $y_2, y_1-x, y-x$ are pairwise distinct;
\begin{align*}
\mathcal{V}_{x,1}(\E) &=  \Big\{  \big(\E, \E_{(y_2,1)}^{(1,-1)} \oplus \E_{(y_1 -x,1)}^{(1,-1)} \oplus \E_{(y,1)}^{(1,0)},  q \big),
\big(\E, \E_{(y_2,2)}^{(2,-2)} \oplus \E_{(y_1,1)}^{(1,0)} , q -1  \big), \\
& \hspace*{0.8cm} \big(\E, \E_{(y_2,1)}^{(1,-1)} \oplus \E_{(y+y_1-x,1)}^{(2,-1)} , q^2 -q  \big),
\big(\E, \E_{(y_2,1)}^{(1,-1)} \oplus \E_{(y_2,1)}^{(1,-1)} \oplus \E_{(y_1,1)}^{(1,0)},  1 \big) , \\
& \hspace*{0.8cm}  \big(\E, \E_{(y_2-x,1)}^{(1,-2)} \oplus \E_{(y,1)}^{(1,0)} \oplus \E_{(y_1,1)}^{(1,0)},  1 \big) \Big\}
\end{align*}
if $y \neq y_1$ and $y_1 \neq y_2 + x$;
\begin{align*}
\mathcal{V}_{x,1}(\E) &=  \Big\{  \big(\E, \E_{(y_2,1)}^{(1,-1)} \oplus \E_{(y -x,1)}^{(1,-1)} \oplus \E_{(y,1)}^{(1,0)},  q \big),
\big(\E, \E_{(y_2,2)}^{(2,-2)} \oplus \E_{(y,1)}^{(1,0)} , q -1  \big), \\
& \hspace*{0.8cm} \big(\E, \E_{(y_2,1)}^{(1,-1)} \oplus \E_{(y+y_1-x,1)}^{(2,-1)} , q^2 -q  \big),
\big(\E, \E_{(y_2,1)}^{(1,-1)} \oplus \E_{(y_2,1)}^{(1,-1)} \oplus \E_{(y,1)}^{(1,0)},  1 \big) , \\
& \hspace*{0.8cm}  \big(\E, \E_{(y_2-x,1)}^{(1,-2)} \oplus \E_{(y,1)}^{(1,0)} \oplus \E_{(y_1,1)}^{(1,0)},  1 \big) \Big\}
\end{align*}
if $y \neq y_1$ and $y \neq y_2 + x$.  \\

\item \label{i22} Let $\E := \E_{(y,1)}^{(1,0)} \oplus \E_{(y_1,1)}^{(1,0)} \oplus \E_{(y_2,1)}^{(1,d)}$ for $|d| \geq 2$, then
\begin{align*}
\mathcal{V}_{x,1}(\E) &=  \Big\{ \big(\E, \E_{(y,1)}^{(1,0)} \oplus \E_{(y_1,1)}^{(1,0)} \oplus \E_{(y_2-x,1)}^{(1,d-1)}, q^2 \big),
\big(\E, \E_{(y-x,1)}^{(1,-1)} \oplus \E_{(y_1,1)}^{(1,0)} \oplus \E_{(y_2,1)}^{(1,d)},  1 \big),\\
& \hspace*{0.8cm} \big(\E, \E_{(y,1)}^{(1,0)} \oplus \E_{(y_1-x,1)}^{(1,-1)} \oplus \E_{(y_2,1)}^{(1,d)},  1 \big),
\big(\E, \E_{(y+y_1-x,1)}^{(2,-1)} \oplus \E_{(y_2,1)}^{(1,d)},  q-1 \big)
\Big\}
\end{align*}
if $d \geq 2$ and $y \neq y_1$;
\begin{align*}
\mathcal{V}_{x,1}(\E) &=  \Big\{ \big(\E, \E_{(y,1)}^{(1,0)} \oplus \E_{(y_1,1)}^{(1,0)} \oplus \E_{(y_2-x,1)}^{(1,d-1)}, q^2 \big),
\big(\E, \E_{(y-x,1)}^{(1,-1)} \oplus \E_{(y,1)}^{(1,0)} \oplus \E_{(y_2,1)}^{(1,d)},  q+1 \big)
\Big\}
\end{align*}
if $d \geq 2$ and $y = y_1$;
\begin{align*}
\mathcal{V}_{x,1}(\E) &=  \Big\{ \big(\E, \E_{(y,1)}^{(1,0)} \oplus \E_{(y_1,1)}^{(1,0)} \oplus \E_{(y_2-x,1)}^{(1,d-1)}, 1 \big),
\big(\E, \E_{(y-x,1)}^{(1,-1)} \oplus \E_{(y_1,1)}^{(1,0)} \oplus \E_{(y_2,1)}^{(1,d)},  q \big),\\
& \hspace*{0.8cm} \big(\E, \E_{(y,1)}^{(1,0)} \oplus \E_{(y_1-x,1)}^{(1,-1)} \oplus \E_{(y_2,1)}^{(1,d)},  q \big),
\big(\E, \E_{(y+y_1-x,1)}^{(2,-1)} \oplus \E_{(y_2,1)}^{(1,d)},  q^2-q \big)
\Big\}
\end{align*}
if $d \leq - 2$ and $y \neq y_1$;
\begin{align*}
\mathcal{V}_{x,1}(\E) &=  \Big\{ \big(\E, \E_{(y,1)}^{(1,0)} \oplus \E_{(y_1,1)}^{(1,0)} \oplus \E_{(y_2-x,1)}^{(1,d-1)}, 1 \big),
\big(\E, \E_{(y-x,1)}^{(1,-1)} \oplus \E_{(y,1)}^{(1,0)} \oplus \E_{(y_2,1)}^{(1,d)},  q^2+q \big)
\Big\}
\end{align*}
if $d \leq -2$ and $y = y_1$. \\

\item \label{i23} Let $\E := \E_{(y,1)}^{(1,0)} \oplus \E_{(y_1,1)}^{(1,1)} \oplus \E_{(y_2,1)}^{(1,d)}$ for $d \leq -1$ or $d \geq 2$, then
\begin{align*}
\mathcal{V}_{x,1}(\E) &=  \Big\{ \big(\E, \E_{(y,1)}^{(1,0)} \oplus \E_{(y_1,1)}^{(1,1)} \oplus \E_{(y_2-x,1)}^{(1,d-1)}, q^2 \big),
\big(\E, \E_{(y-x,1)}^{(1,-1)} \oplus \E_{(y_1,1)}^{(1,1)} \oplus \E_{(y_2,1)}^{(1,d)},  1 \big),\\
& \hspace*{0.8cm} \big(\E, \E_{(y,1)}^{(1,0)} \oplus \E_{(y_1-x,1)}^{(1,0)} \oplus \E_{(y_2,1)}^{(1,d)},  q \big)
\Big\}
\end{align*}
if $d > 2$ and $y \neq y_1-x$;
\begin{align*}
\mathcal{V}_{x,1}(\E) &=  \Big\{ \big(\E, \E_{(y,1)}^{(1,0)} \oplus \E_{(y_1,1)}^{(1,1)} \oplus \E_{(y_2-x,1)}^{(1,d-1)}, q^2 \big),
\big(\E, \E_{(y-x,1)}^{(1,-1)} \oplus \E_{(y_1,1)}^{(1,1)} \oplus \E_{(y_2,1)}^{(1,d)},  1 \big),\\
& \hspace*{0.8cm} \big(\E, \E_{(y,1)}^{(1,0)} \oplus \E_{(y_1-x,1)}^{(1,0)} \oplus \E_{(y_2,1)}^{(1,d)},  1 \big),
\big(\E, \E_{(y,2)}^{(2,0)} \oplus  \E_{(y_2,1)}^{(1,d)},  q-1 \big)
\Big\}
\end{align*}
if $d > 2$ and $y = y_1-x$;
\begin{align*}
\mathcal{V}_{x,1}(\E) &=  \Big\{ \big(\E, \E_{(y,1)}^{(1,0)} \oplus \E_{(y_1,1)}^{(1,1)} \oplus \E_{(y_2-x,1)}^{(1,d-1)}, 1 \big),
\big(\E, \E_{(y-x,1)}^{(1,-1)} \oplus \E_{(y_1,1)}^{(1,1)} \oplus \E_{(y_2,1)}^{(1,d)},  q \big),\\
& \hspace*{0.8cm} \big(\E, \E_{(y,1)}^{(1,0)} \oplus \E_{(y_1-x,1)}^{(1,0)} \oplus \E_{(y_2,1)}^{(1,d)},  q^2 \big)
\Big\}
\end{align*}
if $d < -1$ and $y \neq  y_1-x$;
\begin{align*}
\mathcal{V}_{x,1}(\E) &=  \Big\{ \big(\E, \E_{(y,1)}^{(1,0)} \oplus \E_{(y_1,1)}^{(1,1)} \oplus \E_{(y_2-x,1)}^{(1,d-1)}, 1 \big),
\big(\E, \E_{(y-x,1)}^{(1,-1)} \oplus \E_{(y_1,1)}^{(1,1)} \oplus \E_{(y_2,1)}^{(1,d)},  q \big),\\
& \hspace*{0.8cm} \big(\E, \E_{(y,1)}^{(1,0)} \oplus \E_{(y_1-x,1)}^{(1,0)} \oplus \E_{(y_2,1)}^{(1,d)},  q \big),
\big(\E, \E_{(y,2)}^{(2,0)} \oplus  \E_{(y_2,1)}^{(1,d)},  q^2-q \big)
\Big\}
\end{align*}
if $d < -1$ and $y = y_1-x$;
\begin{align*}
\mathcal{V}_{x,1}(\E) &=  \Big\{ \big(\E, \E_{(y,1)}^{(1,0)} \oplus \E_{(y_1,1)}^{(1,1)} \oplus \E_{(y_2-x,1)}^{(1,d-1)}, q^2 \big),
\big(\E, \E_{(y-x,1)}^{(1,-1)} \oplus \E_{(y_1,1)}^{(1,1)} \oplus \E_{(y_2,1)}^{(1,d)},  1 \big),\\
& \hspace*{0.8cm} \big(\E, \E_{(y,1)}^{(1,0)} \oplus \E_{(y_1-x,1)}^{(1,0)} \oplus \E_{(y_2,1)}^{(1,d)},  q \big)
\Big\}
\end{align*}
if $d =2$, $y_1 \neq  y + x$ and $y_1 \neq y_2 -x$;
\begin{align*}
\mathcal{V}_{x,1}(\E) &=  \Big\{ \big(\E, \E_{(y,1)}^{(1,0)} \oplus \E_{(y_1,1)}^{(1,1)} \oplus \E_{(y_2-x,1)}^{(1,d-1)}, q \big),
\big(\E, \E_{(y-x,1)}^{(1,-1)} \oplus \E_{(y_1,1)}^{(1,1)} \oplus \E_{(y_2,1)}^{(1,d)},  1 \big),\\
& \hspace*{0.8cm} \big(\E, \E_{(y,1)}^{(1,0)} \oplus \E_{(y_1-x,1)}^{(1,0)} \oplus \E_{(y_2,1)}^{(1,d)},  q \big),
\big(\E, \E_{(y,1)}^{(1,0)} \oplus  \E_{(y_1,2)}^{(2,2)} ,  q^2-q \big)
\Big\}
\end{align*}
if $d =2$, $y_1 \neq  y + x$ and $y_1 = y_2 -x$;
\begin{align*}
\mathcal{V}_{x,1}(\E) &=  \Big\{ \big(\E, \E_{(y,1)}^{(1,0)} \oplus \E_{(y_1,1)}^{(1,1)} \oplus \E_{(y_2-x,1)}^{(1,d-1)}, q^2 \big),
\big(\E, \E_{(y-x,1)}^{(1,-1)} \oplus \E_{(y_1,1)}^{(1,1)} \oplus \E_{(y_2,1)}^{(1,d)},  1 \big),\\
& \hspace*{0.8cm} \big(\E, \E_{(y,1)}^{(1,0)} \oplus \E_{(y_1-x,1)}^{(1,0)} \oplus \E_{(y_2,1)}^{(1,d)},  1 \big),
\big(\E, \E_{(y,2)}^{(2,0)} \oplus \E_{(y_2,1)}^{(1,d)}  ,  q-1 \big)
\Big\}
\end{align*}
if $d =2$, $y_1 = y + x$ and $y_1 \neq y_2 -x$;
\begin{align*}
\mathcal{V}_{x,1}(\E) &=  \Big\{ \big(\E, \E_{(y,1)}^{(1,0)} \oplus \E_{(y_1,1)}^{(1,1)} \oplus \E_{(y_2-x,1)}^{(1,d-1)}, q \big),
\big(\E, \E_{(y-x,1)}^{(1,-1)} \oplus \E_{(y_1,1)}^{(1,1)} \oplus \E_{(y_2,1)}^{(1,d)},  1 \big),\\
& \hspace*{0.8cm} \big(\E, \E_{(y,1)}^{(1,0)} \oplus \E_{(y_1-x,1)}^{(1,0)} \oplus \E_{(y_2,1)}^{(1,d)},  1 \big),
\big(\E, \E_{(y,2)}^{(2,0)} \oplus \E_{(y_2,1)}^{(1,d)}  ,  q-1 \big), \\
& \hspace*{0.8cm} \big(\E, \E_{(y,1)}^{(1,0)} \oplus  \E_{(y_1,2)}^{(2,2)} ,  q^2-q \big)
\Big\}
\end{align*}
if $d =2$, $y_1 = y + x$ and $y_1 = y_2 -x$;

\begin{align*}
\mathcal{V}_{x,1}(\E) &=  \Big\{ \big(\E, \E_{(y,1)}^{(1,0)} \oplus \E_{(y_1,1)}^{(1,1)} \oplus \E_{(y_2-x,1)}^{(1,d-1)}, 1 \big),
\big(\E, \E_{(y-x,1)}^{(1,-1)} \oplus \E_{(y_1,1)}^{(1,1)} \oplus \E_{(y_2,1)}^{(1,d)},  q \big),\\
& \hspace*{0.8cm} \big(\E, \E_{(y,1)}^{(1,0)} \oplus \E_{(y_1-x,1)}^{(1,0)} \oplus \E_{(y_2,1)}^{(1,d)},  q^2  \big)
\Big\}
\end{align*}
if $d =-1$, $y \neq  y_2 + x$ and $y \neq y_1 -x$;
\begin{align*}
\mathcal{V}_{x,1}(\E) &=  \Big\{ \big(\E, \E_{(y,1)}^{(1,0)} \oplus \E_{(y_1,1)}^{(1,1)} \oplus \E_{(y_2-x,1)}^{(1,d-1)}, 1 \big),
\big(\E, \E_{(y-x,1)}^{(1,-1)} \oplus \E_{(y_1,1)}^{(1,1)} \oplus \E_{(y_2,1)}^{(1,d)},  q \big),\\
& \hspace*{0.8cm} \big(\E, \E_{(y,1)}^{(1,0)} \oplus \E_{(y_1-x,1)}^{(1,0)} \oplus \E_{(y_2,1)}^{(1,d)},  q \big),
\big(\E, \E_{(y_2,1)}^{(1,d)} \oplus  \E_{(y,2)}^{(2,0)} ,  q^2-q \big)
\Big\}
\end{align*}
if $d =-1$, $y \neq  y_2 + x$ and $y = y_1 -x$;
\begin{align*}
\mathcal{V}_{x,1}(\E) &=  \Big\{ \big(\E, \E_{(y,1)}^{(1,0)} \oplus \E_{(y_1,1)}^{(1,1)} \oplus \E_{(y_2-x,1)}^{(1,d-1)}, 1 \big),
\big(\E, \E_{(y-x,1)}^{(1,-1)} \oplus \E_{(y_1,1)}^{(1,1)} \oplus \E_{(y_2,1)}^{(1,d)},  1 \big),\\
& \hspace*{0.8cm} \big(\E, \E_{(y,1)}^{(1,0)} \oplus \E_{(y_1-x,1)}^{(1,0)} \oplus \E_{(y_2,1)}^{(1,d)},  q^2 \big),
\big(\E, \E_{(y_2,2)}^{(2,-2)} \oplus \E_{(y_1,1)}^{(1,1)}  ,  q-1 \big)
\Big\}
\end{align*}
if $d =-1$, $y =  y_2 + x$ and $y \neq y_1 -x$;
\begin{align*}
\mathcal{V}_{x,1}(\E) &=  \Big\{ \big(\E, \E_{(y,1)}^{(1,0)} \oplus \E_{(y_1,1)}^{(1,1)} \oplus \E_{(y_2-x,1)}^{(1,d-1)}, 1 \big),
\big(\E, \E_{(y-x,1)}^{(1,-1)} \oplus \E_{(y_1,1)}^{(1,1)} \oplus \E_{(y_2,1)}^{(1,d)},  1 \big),\\
& \hspace*{0.8cm} \big(\E, \E_{(y,1)}^{(1,0)} \oplus \E_{(y_1-x,1)}^{(1,0)} \oplus \E_{(y_2,1)}^{(1,d)},  q \big),
\big(\E, \E_{(y_2,2)}^{(2,-2)} \oplus \E_{(y_1,1)}^{(1,1)}, q-1 \big) ,\\
& \hspace*{0.8cm}  \big(\E, \E_{(y_2,1)}^{(1,d)} \oplus  \E_{(y,2)}^{(2,0)} ,  q^2-q \big)
\Big\}
\end{align*}
if $d =-1$, $y =  y_2 + x$ and $y = y_1 -x$.\\

\item \label{i24} Let $\E := \E_{(y,1)}^{(1,d)} \oplus \E_{(y_1,1)}^{(1,d_1)} \oplus \E_{(y_2,1)}^{(1,d_2)}$ with  $d_2 -d_1 > 1$ and $d_1 -d > 1$, then
\begin{align*}
\mathcal{V}_{x,1}(\E) &=  \Big\{ \big(\E, \E_{(y-x,1)}^{(1,d-1)} \oplus \E_{(y_1,1)}^{(1,d_1)} \oplus \E_{(y_2,1)}^{(1,d_2)}, 1 \big),
\big(\E, \E_{(y,1)}^{(1,d)} \oplus \E_{(y_1-x,1)}^{(1,d_1-1)} \oplus \E_{(y_2,1)}^{(1,d_2)},  q \big),\\
& \hspace*{0.8cm} \big(\E, \E_{(y,1)}^{(1,d)} \oplus \E_{(y_1,1)}^{(1,d_1)} \oplus \E_{(y_2-x,1)}^{(1,d_2 -1)},  q^2 \big)
\Big\}.
\end{align*}


\end{enumerate}

\end{thm}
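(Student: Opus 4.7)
The proof will proceed by a case-by-case analysis, matching the 24 cases of the statement, which correspond to the possible decompositions of a rank $3$ bundle $\cE$ on $X$ into indecomposable summands according to Atiyah's classification. For each case, the strategy is to compute the Hall algebra product $\cK_x \ast \cE$ (equivalently, the commutator $[\cK_x,\cE]$ up to non-vector-bundle parts) using the algorithm from \cite{roberto-elliptic} outlined in the introduction, and then read off the multiplicities $m_{x,1}(\cE,\cE')=h_{\cK_x,\cE'}^{\cE}$ as the structure constants.

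The plan for each case is as follows. First, I decompose $\cE$ as a direct sum of geometrically indecomposable bundles and traces, and write each factor as a linear combination of the generators $T_{(n,d)}^{\tilde\rho}$ of the twisted spherical Hall subalgebras $\sU_X^{\tilde\rho}$; the sheaf $\cK_x$ is already the generator $T_{(0,1)}^{\tilde{\mathbf 1}}$. Second, I expand $\cK_x \ast \cE$ using bilinearity, move $T_{(0,1)}^{\tilde{\mathbf 1}}$ past the factors corresponding to $\cE$ by repeatedly applying the commutator formulas of Burban--Schiffmann and Fratila (which in the present case reduce to exchanging generators of different slope and evaluating a triangular area in $\Z^2$ with no interior lattice points, so the only contribution comes from the $\tilde{\mathbf 1}$-isotypic piece), so that the slopes appear in increasing order. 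Finally, I convert the resulting expression back to products of sheaves via the inverse base change and collect terms; products $T_{\bv,x} T_{\bv',x}$ at equal slopes are handled via the Macdonald calculus relating power sums and Hall--Littlewood polynomials, exactly as in \cite[\S 4]{roberto-elliptic}.

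Several structural simplifications dramatically cut the amount of raw computation. The second duality (Theorem \ref{thm-dualities}) reduces cases with $d<0$ line-bundle summands to cases with $d>0$, so I may restrict attention to non-negative twists and recover the negative-degree cases by applying $(-)^{\vee}$ and tensoring by $\cO_X(x)$. For vertices of the form $\cE = \cM \oplus \cL$ with $|2\deg\cL - \deg\cM|$ sufficiently large (cases \ref{i11}, \ref{i13}, \ref{i16}, \ref{i22}, \ref{i23} and partly \ref{i24}), the edges are already computed in Corollary \ref{cor-thm1} by bootstrapping from the rank $2$ graph $\mathscr{G}_{x,1}^{(2)}$; these cases require only reading off data from the rank $2$ description (to be established in Theorem \ref{evencomponent} and its odd-degree counterpart). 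At every step I use Theorem \ref{thm-multi} as a consistency check: the weights on the outgoing edges at $\cE$ must sum to $\#\mathrm{Gr}(2,3)(\Fq) = q^2+q+1$, which already pins down several unknown multiplicities once the set of neighbors is identified, and catches arithmetic errors.

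The main obstacle will be the profusion of sub-cases forced by coincidences among the supporting points of the indecomposable summands: when places like $y$, $y_1$, $y_2$, $y-x$, $y_1-x$, $y_2 + x$, etc.\ collide, extensions that were generic split off unexpected indecomposable summands (e.g.\ passing from a sum of three line bundles to a line bundle plus a rank $2$ Atiyah bundle, or to a geometrically indecomposable rank $3$ bundle, as seen in cases \ref{i20}, \ref{i21}, \ref{i23}). Systematically, at the Hall-algebra level this corresponds to the appearance of non-trivial contributions from the Macdonald calculus at equal slopes, which redistributes the mass $q^2+q+1$ among a strictly larger set of neighbors with smaller individual multiplicities; I will treat each coincidence pattern separately and verify with Theorem \ref{thm-multi}. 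The remaining cases with $\cE$ indecomposable of rank $3$ (\ref{i1}--\ref{i4}) are the quickest, since then $\cE$ is itself a single generator $T_{(3,d)}^{\tilde\rho}$ and the commutator with $T_{(0,1)}^{\tilde{\mathbf 1}}$ is computed directly, producing a short list of neighbors whose multiplicities follow immediately from the structure constants of $\sU_X^{\tilde\rho}$ together with the duality theorems.
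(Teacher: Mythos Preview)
There is a direction error at the heart of your computation. The Hall product $\cK_x \ast \cF = v^{-\langle\cK_x,\cF\rangle}\sum_{\cE} h_{\cK_x,\cF}^{\cE}\,\cE$ has $\cF$ as the \emph{subsheaf} and the summation variable $\cE$ as the \emph{extension}: the coefficient of $\cE$ is $h_{\cK_x,\cF}^{\cE}=m_{x,1}(\cE,\cF)$. Hence computing $\cK_x\ast\cE$ for a fixed source vertex $\cE$ does \emph{not} yield $\cV_{x,1}(\cE)$; it produces $\sum_{\cF} m_{x,1}(\cF,\cE)\,\cF$, i.e.\ the list of bundles $\cF$ that have $\cE$ among \emph{their} neighbors (the in-edges at $\cE$), not the out-neighbors of $\cE$. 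Your identity $m_{x,1}(\cE,\cE')=h_{\cK_x,\cE'}^{\cE}$ is correct, but that constant lives in the expansion of $\cK_x\ast\cE'$, not of $\cK_x\ast\cE$. Carried out as written, each of your 24 computations would describe the wrong set of edges.

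The paper resolves this by reversing the bookkeeping: each lemma in Section~\ref{sec-proofmainthm} fixes a \emph{target} bundle $\cE'$, computes $\pi^{\mathrm{vec}}(\cK_x\ast\cE')$, and reads off all sources $\cE$ with an edge to $\cE'$ together with their multiplicities. The 24 cases of the theorem, which are indexed by the source $\cE$, are then assembled by collecting the contributions of $\cE$ across these target-indexed lemmas, supplemented by Corollary~\ref{cor-thm1} for the asymptotic cases and by the sum rule $\sum_{\cE'} m_{x,1}(\cE,\cE')=q^2+q+1$ from Theorem~\ref{thm-multi}. Once you correct the direction, your plan is essentially the paper's; note however that in general $\cK_x$ is not the single generator $T_{(0,1)}^{\tilde{\mathbf 1}}$ but rather $N_1^{-1}\sum_{\sigma\in\mathrm{P}_1}\tilde\sigma(-x)\,T_{(0,1)}^{\tilde\sigma}$, and only those $\sigma$ matching the characters in $\cE'$ survive the commutator.
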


\begin{proof} The proof is based on the algorithm developed in \cite{roberto-elliptic}. For better readability  we separated the proof in several lemmas and present it in the next section.

Item \ref{i1} follows from \cite[Thm. 5.3]{roberto-elliptic}.
Item \ref{i2} follows from \cite[Thm. 5.3]{roberto-elliptic} and Lemmas \ref{idX} and \ref{idL}.
Item \ref{i3} follows from Lemmas \ref{idC}, and \ref{idB}, \cite[Thm. 5.1]{roberto-elliptic}, and Lemmas \ref{idE} and \ref{idF}.
Item \ref{i4} follows from Lemmas \ref{idA.3} and \ref{idS}.
Item \ref{i5} follows from  \cite[Thm. 5.3]{roberto-elliptic} and Lemmas \ref{idX}, \ref{idL} and \ref{idK}.
Item \ref{i6} follows from Lemmas \ref{idI}, \ref{idX} and \ref{idL}.
Item \ref{i7} follows from \cite[Lemma 6.7]{roberto-elliptic} joint with Theorem \ref{thm1}, \cite[Lemma 6.8]{roberto-elliptic} joint with Theorem \ref{thm1} and Lemma \ref{idF}.
Item \ref{i8} follows from \cite[Lemma 6.7]{roberto-elliptic} joint with Theorem \ref{thm1}, \cite[Lemma 6.8]{roberto-elliptic} joint with Theorem \ref{thm1}, Lemmas \ref{idC} and \ref{idF2}.
Item \ref{i9} follows from Theorem \ref{thm1} \ref{thm1-iv} joint with  \cite[Lemma 6.7]{roberto-elliptic}, Theorem \ref{thm1} \ref{thm1-iii}  and  Lemma \ref{idC.0}.
Item \ref{i10} follows from Theorem \ref{thm1} \ref{thm1-iv} joint with \cite[Lemma 6.7]{roberto-elliptic}, Lemmas \ref{idB.0} and \ref{Prop26/01}, and  Theorem \ref{thm1} \ref{thm1-iii}.

For \ref{i11} observes that if $d \geq 2$ (resp. $d \leq 2$), then $\E$ satisfies the hypothesis of Corollary \ref{cor-thm1} \ref{cor-thm1-i} (resp. \ref{cor-thm1-ii}) and thus \cite[Lemmas 6.7, 6.8]{roberto-elliptic} completes the proof.

For \ref{i12} observe that since $y \neq y'$, Atiyah's classification \ref{atiyahtheorem} implies
$$\Ext\big(\E_{(y',1)}^{(1,0)},\E_{(y,1)}^{(2,0)} \big) = \Ext\big(\E_{(y'-x,1)}^{(1,-1)},\E_{(y,1)}^{(2,0)}\big)  = 0.$$
Hence $\E$ satisfy the hypothesis of Lemma \ref{lem1} \ref{lem1-iii} and thus Theorem \ref{thm1} \ref{thm1-iii} with  \cite[Thm. 5.3]{roberto-elliptic} and Lemma \ref{idX} complete the proof.

Item \ref{i13} follows from Lemma \ref{idE} and Theorem \ref{thm1} \ref{thm1-i} joint with \cite[Lemma 6.7]{roberto-elliptic} for $d=1$ and Corollary \ref{cor-thm1} for $d \geq 2$. Since $\E_{(y,1)}^{(2,0)} $ has only one neighbor in the rank $2$ graph $\mathscr{G}_{x,1}$ and it is indecomposable, Corollary \ref{cor-thm1} implies the case $d \leq -1.$

Item \ref{i14} follows from Lemmas \ref{idC}, \ref{idB}, \ref{idE}, \ref{idF}. \ref{idF2}, \ref{idG}, \ref{idPO}, \ref{idD}, \ref{idSA} and \cite[Thm. 5.1]{roberto-elliptic}.
Item \ref{i15} follows from Lemmas \ref{idN.1}, \ref{idPF}, \ref{idR.O}, \ref{idS}, \ref{idA.3} and \ref{idH}.
Item \ref{i16} follows from Corollary \ref{cor-thm1} \ref{cor-thm1-i} if $d \geq 2$ and Corollary \ref{cor-thm1} \ref{cor-thm1-ii} if $d \leq -1$.
Item \ref{i17} follows from Lemma \ref{idI}$\mathrm{(ii)}$.
Item \ref{i18} follows from Lemmas \ref{idI}$\mathrm{(i)}$, \ref{idK}$\mathrm{(iv)}$ and \ref{idX}$\mathrm{(vi)}$.
 Item \ref{i19} follows from \cite[Thm. 5.3]{roberto-elliptic} and Lemmas \ref{idK}$\mathrm{(v)}$ and \ref{idX}.
Item \ref{i20} follows from Lemmas \ref{idB}, \ref{idF}, \ref{idF2}, \ref{idG}, \ref{idD} and Theorem \ref{thm1} joint with \cite[Thm. 6.2]{roberto-elliptic}.
Item \ref{i21} follows from Lemmas \ref{idR20}, \ref{idN20}, \ref{idB.0}, \ref{Prop26/01}, \ref{idS}, \ref{idC20}, \ref{idE21} and by observing that
$m_{x,r}(\E,\E') = m_{x,r}(\E \otimes \Line,\E' \otimes \Line)$ for every $\Line \in \Pic X$.
Item \ref{i22} follows from Corollary \ref{cor-thm1} joint with \cite[Thm. 6.2]{roberto-elliptic}.
Item \ref{i23} for $d> 2$ or $d< -1$ follows from Corollary \ref{cor-thm1} coupled with \cite[Thm. 6.2]{roberto-elliptic}. For $d=-1, 2$ item \ref{i23} follows from Theorem \ref{thm1} joint with \cite[Thm. 6.2]{roberto-elliptic} and Lemmas \ref{idL}, \ref{idK}, \ref{idI}.
Item \ref{i24} follows from \cite[Thm. 5.2]{roberto-elliptic}.
\end{proof}


\begin{figure}[p]

\newgeometry{left=0mm, right=0mm}\begin{center}\hspace{-6.5cm}\includegraphics[angle=270,scale=0.95]{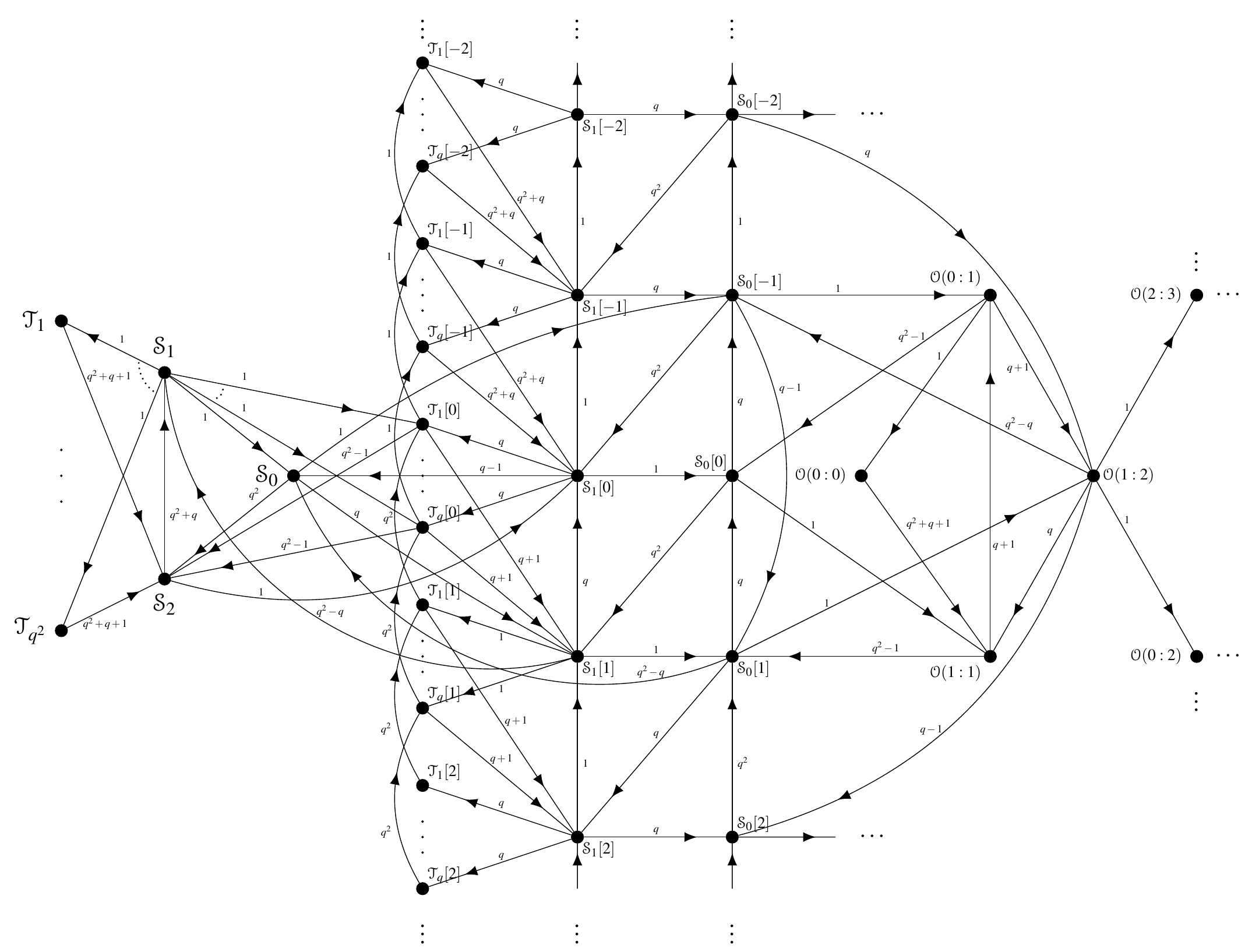}\end{center}\restoregeometryLet $x \in |X|$ and $x' \in |X_m|$ be such that $x'$ lies above $x$, i.e.\ $\pi(x')=x$. We remember that there are $\gcd(m, |x|)$ closed points in $|X_m|$ lying above $x$ and $|x'| = |x|/\gcd(m, |x|).$

\begin{thm}\label{thm-rank3-degree2-graph} 
Let $y \in |X|$ of degree two and $\E :=\E_{(x,1)}^{(1,0)} \oplus \E_{(x,1)}^{(1,0)} \oplus \E_{(x,1)}^{(1,0)}$\; for some closed point $x$ in $X$ of degree one. Then
\[ \mathcal{V}_{y,1}(\E) = \Big\{ \big(\E, \E_{(x-y,1)}^{(1,-2)} \oplus \E_{(x,1)}^{(1,0)}  \oplus \E_{(x,1)}^{(1,0)} , q^2 + q + 1\big),
 \big(\E, \E_{(y,1)}^{(2,-2)}\oplus \E_{(x,1)}^{(1,0)} , q^4 - q\big) \Big\}\]
is the $\Phi_{y,1}$-neighborhood of $\E$ in $\mathscr{G}_{y,1}^{(3)}$.

\begin{proof} Let $X_2 := X \otimes \mathbb{F}_{q^2}$ be the degree two constant extension of $X$. Let $x' \in |X_2|$ lying above $x$, i.e.\ $\pi(x')=x$, where $\pi: X_2 \rightarrow X$ is the constant extension map. Note that $x'$ is unique and $|x'| = 1.$
By \cite[Lemma 6.5]{oliver-graphs} $\pi^{*}$ restricts to an injective map from the set of traces and decomposable bundles on $X$ to the set of decomposable bundles on $X_2$. Moreover, 
$$\pi^{*}\big(\E_{(x,1)}^{(1,0)} \oplus \E_{(x,1)}^{(1,0)} \oplus \E_{(x,1)}^{(1,0)}\big) = \E_{(x',1)}^{(1,0)} \oplus \E_{(x',1)}^{(1,0)} \oplus \E_{(x',1)}^{(1,0)}.$$ 
Suppose $\E, \E' \in \Bun_3 X$ are such that $m_{y,1}(\E,\E') \neq 0$, i.e.\ there exists a short exact sequence
\[ 0 \longrightarrow \E' \longrightarrow \E \longrightarrow \mathcal{K}_{y} \longrightarrow 0. \]
Since extension of constants is an exact functor, we still have an exact sequence when we extend the scalars to $\mathbb{F}_{q^2}$
\begin{equation}\label{sho}
 0 \longrightarrow \pi^{*}(\E') \longrightarrow \pi^{*}(\E) \longrightarrow \mathcal{K}_{z} \oplus \mathcal{K}_{ z'}  \longrightarrow 0
\end{equation}
where  $z, z'$ are the two closed points of $X_2$ lying above $y$.
The above short exact sequence (\ref{sho}) splits into two exact sequences
\[  0 \rightarrow \E'' \rightarrow \pi^{*}(\E) \rightarrow \mathcal{K}_{z} \rightarrow 0 \quad \text{ and } \quad
0 \rightarrow \pi^{*}(\E')  \rightarrow \E'' \rightarrow \mathcal{K}_{z'} \rightarrow 0   \]
where $\E'' \in \Bun_3 X_2$ is the kernel of $\pi^{*}(\E) \rightarrow \mathcal{K}_{z}.$

Hence for every $\big( \E, \E', m \big)$ in $\mathrm{Edge}\; \mathscr{G}_{y,1}^{(3)}$, there are a vector bundle $\E'' \in \Bun_3 X_2$ and edges $\big( \E, \E'', m' \big)$ in $\mathrm{Edge}\; \mathscr{G}_{z,1}^{(3)}$ and
$\big( \E'', \E', m'' \big)$ in $\mathrm{Edge}\; \mathscr{G}_{z',1}^{(3)}$.

From Theorem \ref{maintheorem} item \ref{i17} (or directly Lemma \ref{idN20}) we have only one option for $\E''$ in $\mathscr{G}_{z,1}^{(3)}$, namely $\E'' = \E_{(x'-z,1)}^{(1,-1)} \oplus \E_{(x',1)}^{(1,0)} \oplus \E_{(x',1)}^{(1,0)}.$ Since $z \neq  z'$,  item \ref{i21} of our main theorem (or directly Lemma \ref{idR20}) implies that $\E''$ has two neighbors in $\mathscr{G}_{z',1}^{(3)}$. Therefore $\E$ must have at most two neighbors in $\mathscr{G}_{y,1}^{(3)}$ over $X$.

Lemma \ref{idL2} shows that
\[ m_{y,1}\big(\E, \E_{(x-y,1)}^{(1,-2)} \oplus \E_{(x,1)}^{(1,0)}  \oplus \E_{(x,1)}^{(1,0)} \big) = q^2 +q+1,\]
i.e.\ $\E_{(x-y,1)}^{(1,-2)} \oplus \E_{(x,1)}^{(1,0)}  \oplus \E_{(x,1)}^{(1,0)} $ is one of the two neighbors of $\E$ in $\mathscr{G}_{y,1}^{(3)}.$ It follows from \cite[Prop. 6.4]{oliver-elliptic} that 
$m_{y,1}\big(\E_{(x,1)}^{(1,0)}  \oplus \E_{(x,1)}^{(1,0)},  \E_{(y,1)}^{(2,-2)}\big)$ is non-zero in $\mathscr{G}_{y,1}^{(2)}$. Thus,
\[ \mathcal{V}_{y,1}(\E) = \Big\{ \big(\E, \E_{(x-y,1)}^{(1,-2)} \oplus \E_{(x,1)}^{(1,0)}  \oplus \E_{(x,1)}^{(1,0)} , q^2 +q+1\big),
 \big(\E, \E_{(y,2)}^{(2,-2)}\oplus \E_{(x,1)}^{(1,0)} , m \big) \Big\}\]
for some positive integer $m$.
By Theorem \ref{thm-multi}, the sum over all the multiplicities of edges originated in a vertex of $\mathscr{G}_{y,1}^{(3)}$ equals the number of rational points on the Grassmannian $\mathrm{Gr}(2,3)$ over the residual field of $y$.  Therefore $m = q^4 -q$, which completes the proof.
\end{proof}
\end{thm}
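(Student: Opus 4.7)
The plan is to reduce the problem to the geometrically split case via the constant field extension $\pi\colon X_2 = X\otimes_{\FF_q}\FF_{q^2}\to X$. Since $y$ has degree two, it splits in $X_2$ as two distinct degree one places $z,z'$, so $\pi^{\ast}\mathcal{K}_y = \mathcal{K}_z\oplus\mathcal{K}_{z'}$. Because $\pi^{\ast}$ is exact, any short exact sequence $0\to\mathcal{E}'\to\mathcal{E}\to\mathcal{K}_y\to 0$ pulls back to a sequence $0\to\pi^{\ast}\mathcal{E}'\to\pi^{\ast}\mathcal{E}\to\mathcal{K}_z\oplus\mathcal{K}_{z'}\to 0$, which I would split as the concatenation of a $\Phi_{z,1}$-edge $\pi^{\ast}\mathcal{E}\to\mathcal{E}''$ and a $\Phi_{z',1}$-edge $\mathcal{E}''\to\pi^{\ast}\mathcal{E}'$ in the graphs over $X_2$. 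Since $\pi^{\ast}\mathcal{E} = \mathcal{E}_{(x',1)}^{(1,0)\oplus 3}$ for the unique $x'\in|X_2|$ above $x$, item \ref{i17} of Theorem \ref{maintheorem} (applied over $X_2$ at the place $z$) pins down $\mathcal{E}''$ uniquely as $\mathcal{E}_{(x'-z,1)}^{(1,-1)}\oplus\mathcal{E}_{(x',1)}^{(1,0)\oplus 2}$. Then item \ref{i21} applied at the place $z'\neq z$ yields exactly two candidate targets in $\mathrm{Bun}_3 X_2$, and the injectivity of $\pi^{\ast}$ on sums of line bundles/traces (\cite[Lemma 6.5]{oliver-graphs}) therefore forces $\mathcal{E}$ to have at most two $\Phi_{y,1}$-neighbors over $X$.

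Next I would identify the two neighbors and one of their multiplicities. For the decomposable candidate $\mathcal{E}_{(x-y,1)}^{(1,-2)}\oplus\mathcal{E}_{(x,1)}^{(1,0)\oplus 2}$, Lemma \ref{idL2} directly computes $m_{y,1}\bigl(\mathcal{E},\ \mathcal{E}_{(x-y,1)}^{(1,-2)}\oplus\mathcal{E}_{(x,1)}^{(1,0)\oplus 2}\bigr) = q^2+q+1$. For the second candidate, the rank-two computation \cite[Prop.~6.4]{oliver-elliptic} shows that $m_{y,1}\bigl(\mathcal{E}_{(x,1)}^{(1,0)\oplus 2},\ \mathcal{E}_{(y,1)}^{(2,-2)}\bigr)\neq 0$, and tensoring such a sub-extension with $\mathcal{E}_{(x,1)}^{(1,0)}$ produces a nonzero contribution to $m_{y,1}\bigl(\mathcal{E},\ \mathcal{E}_{(y,1)}^{(2,-2)}\oplus\mathcal{E}_{(x,1)}^{(1,0)}\bigr)$, so this candidate really is a neighbor.

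To pin down the remaining multiplicity, I would invoke Theorem \ref{thm-multi} (the `sum of multiplicities equals $\#\mathrm{Gr}(n-r,n)(\FF_q(y))$'): with $n=3$, $r=1$ and $|y|=2$, the total weight out of $\mathcal{E}$ equals $\#\mathrm{Gr}(2,3)(\FF_{q^2}) = q^4+q^2+1$. Subtracting the $q^2+q+1$ already accounted for gives exactly $q^4-q$ for the second neighbor, completing the proof.

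The main obstacle I anticipate is the bookkeeping in step two, namely verifying that the two candidate targets produced by items \ref{i17} and \ref{i21} over $X_2$ really are the pullbacks of the two bundles asserted over $X$, and not some other bundle on $X_2$ that fails to descend. Concretely, one has to check that $\pi^{\ast}\bigl(\mathcal{E}_{(x-y,1)}^{(1,-2)}\oplus\mathcal{E}_{(x,1)}^{(1,0)\oplus 2}\bigr)$ and $\pi^{\ast}\bigl(\mathcal{E}_{(y,1)}^{(2,-2)}\oplus\mathcal{E}_{(x,1)}^{(1,0)}\bigr)$ are exactly the two candidates predicted by item \ref{i21}, using that $\pi^{\ast}\mathcal{E}_{(y,1)}^{(2,-2)} = \mathcal{E}_{(z,1)}^{(1,-1)}\oplus\mathcal{E}_{(z',1)}^{(1,-1)}$. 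The multiplicity computation in Lemma \ref{idL2} is the other potentially delicate piece, but the commutator formulas from Lemma \ref{lemma-commutator} and the algorithm of \cite[Sec.~4]{roberto-elliptic} are already in place, so it is essentially routine once the setup is correct.
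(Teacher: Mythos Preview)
Your proposal is correct and follows essentially the same route as the paper's proof: pull back to $X_2$, factor through the unique intermediate bundle $\mathcal{E}''$ using items \ref{i17} and \ref{i21} of Theorem \ref{maintheorem} to bound the number of neighbors by two, then pin down one multiplicity via Lemma \ref{idL2} and the other via the Grassmannian count from Theorem \ref{thm-multi}, with \cite[Prop.~6.4]{oliver-elliptic} guaranteeing the second neighbor is actually present. The only additional care you flag (checking that the two candidates over $X_2$ are precisely the pullbacks of the claimed bundles over $X$) is indeed implicit in the paper as well and is handled exactly as you suggest.
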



\begin{small}
 \bibliographystyle{alpha}
 \bibliography{elliptic1bib}
\end{small}

\end{document}